\newtheorem{theorem}{Theorem}[section]
\newtheorem{lemma}[theorem]{Lemma}
\newtheorem{proposition}[theorem]{Proposition}
\newtheorem{corollary}[theorem]{Corollary}
\newtheorem{remark}[theorem]{Remark}
\theoremstyle{definition}
\numberwithin{equation}{section}
\begin{document}
\title{Li-Yau-Hamilton estimates and Bakry-Emery Ricci curvature}

\author{Yi Li}
\address{Department of Mathematics, Shanghai Jiao Tong University, 800 Dongchuan Road, Shanghai, 200240 China; Shanghai Center for Mathematical
Sciences, Fudan University, 220 Handan road, Shanghai, 200433 China}
\email{yilicms@gmail.com}

\begin{abstract} In this paper we derive Cheng-Yau, Li-Yau, Hamilton estimates for Riemannian
manifolds with Bakry-Emery Ricci curvature bounded from below, and also global
and local upper bounds, in terms of Bakry-Emery Ricci curvature, for the Hessian of positive and bounded solutions of the weighted heat equation on a closed Riemannian manifold.
\end{abstract}
\maketitle


\section{Introduction}\label{section1}

In a seminal paper \cite{Li-Yau86}, Li and Yau derived the gradient estimate
and Harnack inequality for positive solutions of heat equation on a
complete Riemannian manifold. Li-Yau estimate has been improved and generalized to other
nonlinear equations on a Riemannian manifold, see \cite{AT10, ATW09,
BakryQian00, Brighton13, ChenChen09, Grigoryan09, Hamilton93, Hsu11, KarpLi83, Kotschwar07, LiP12, Li05, Li13, Ma06, Mastrolia10, Mastrolia-Rigoli10, SchoenYau94,
SoupletZhang06, Wang14, Yang08, ZhuLi13} and references therein.

An important generalization is a
diffusion operator
\begin{equation}
\Delta_{V}:=\Delta+\langle V,\nabla\!\ \rangle\label{1.1}
\end{equation}
on a Riemannian manifold $(\mathcal{M},g)$ of dimension $m$, where
$\nabla$ and $\Delta$ are respectively the Levi-Civita connection and Beltrami-Laplace
operator of $g$, and where $V$ is a smooth vector field on $\mathcal{M}$. This
operator is also a special case of
$V$-harmonic map introduced in \cite{ChenJostWang11}. As in \cite{BakryEmery85, ChenJostQiu12}, we introduce Bakey-Emery Ricci tensor fields
\begin{equation}
{\rm Ric}_{V}:={\rm Ric}-\frac{1}{2}\mathscr{L}_{V}g, \ \ \
{\rm Ric}^{n,m}_{V}:={\rm Ric}_{V}-\frac{1}{n-m}V\otimes V\label{1.2}
\end{equation}
for any number $n>m$, where $\mathscr{L}_{V}$ stands for the Lie derivative along
the direction $V$. When $V=\nabla f$, we simply write ${\rm Ric}_{V}$ and
${\rm Ric}^{n,m}_{V}$ as ${\rm Ric}_{f}$ and ${\rm Ric}^{n,m}_{f}$ respectively.

The equation
\begin{equation*}
{\rm Ric}_{V}=\lambda g, \ \ \ \lambda\in{\bf R},
\end{equation*}
is exactly the Ricci soliton
equation, which is one-to-one corresponding to a self-similar solution of
Ricci flow (see, \cite{ChowKnopf04}). A basic example of Ricci solitons is Hamilton's cigar soliton or Witten's balck hole, which is the complete Riemann surface $({\bf R}^{2}, g_{{\rm cs}})$ where
\begin{equation*}
g_{{\rm cs}}:=\frac{dx\otimes dx+dy\otimes dy}{1+x^{2}+y^{2}}.
\end{equation*}
It is easy to see that the scalar curvature of $g_{{\rm cs}}$ is
$4/(1+x^{2}+y^{2})$ and hence the cigar soliton is not Ricci-flat. An important result about the cigar soliton is that it is rotationally symmetric, has positive Gaussian curvature,
is asymptotic to a cyclinder near infinity, and, up to homothety, is the unique
rotationally symmetric gradient Ricci soliton of positive curvature on
${\bf R}^{2}$. Hamilton \cite{Hamilton95} showed that any complete noncompact steady
gradient Ricci soliton with positive Gaussian curvature is a cigar soliton.

To study the Ricci-flat metric on complete noncompact Riemannian manifold, the
author \cite{LY13} found a criterion on Ricci-flat metrics motivated from the
steady gradient Ricci soliton. Moreover, the author introduced a class of
Ricci flow type parabolic differential equation:
\begin{eqnarray}
\partial_{t}g(t)&=&-2{\rm Ric}_{g(t)}+2\alpha_{1}\nabla _{g(t)}\phi(t)
\otimes\nabla_{g(t)}\phi(t)+2\alpha_{2}\nabla^{2}_{g(t)}\phi(t),\label{1.3}\\
\partial_{t}\phi(t)&=&\Delta_{g(t)}\phi(t)+\beta_{1}|\nabla_{g(t)}
\phi(t)|^{2}_{g(t)}+\beta_{2}\phi(t)\label{1.4}
\end{eqnarray}
where $\alpha_{1},\alpha_{2},\beta_{1},\beta_{2}$ are given constants. Note
that the equation (\ref{1.3}) can be written as
\begin{equation}
\partial_{t}g(t)=-2{\rm Ric}^{n,m}_{V(t)}\label{1.5}
\end{equation}
for some suitable constants $\alpha_{1}, \alpha_{2}, n$, where $V(t):=
\nabla \phi(t)$. Hence the Bakry-Emery-Ricci curvature naturally
appears in \cite{LY13}. Under some hypotheses on initial data and constants $\alpha_{i}, \beta_{i}$, the author proved the short time existence and Berstein's type estimates for
(\ref{1.3})--(\ref{1.4}) in \cite{LY13}.

Another important relation between Bakry-Emery-Ricci curvature is the study
of Killing vector fields. The authors in \cite{LL11} investigated the gradient
flow for the functional
\begin{equation}
\mathcal{I}(X):=\int_{\mathcal{M}}|\mathscr{L}_{X}g|^{2}dV.\label{1.6}
\end{equation}
on the space of smooth vector fields. The critical point $X$ of $\mathcal{I}$ satisfies
\begin{equation}
\Delta X^{i}+\nabla^{i}{\rm div}(X)+R^{i}{}_{j}X^{j}=0.\label{1.7}
\end{equation}
We then in \cite{LL11} introduced a flow
\begin{equation}
\partial_{t}X_{t}=\Delta X_{t}+\nabla{\rm div}(X_{t})+{\rm Ric}(X_{t}), \ \ \
X_{0}:=X,\label{1.8}
\end{equation}
to study the existence of nonzero Killing vector fields on a closed positively
curved manifold. Actually, we showed that

\begin{theorem}\label{t1.1}{\bf (Li-Liu \cite{LL11}, 2011)} Suppose that $(\mathcal{M},g)$
is a closed and orientable Riemannian manifold. If $X$ is a smooth vector field, there exists a unique smooth solution $X_{t}$ to the flow (\ref{1.8}) for all time $t$. As $t$ goes
to infinity, the vector field $X_{t}$ converges uniformly to a Killing
vector field $X_{\infty}$.
\end{theorem}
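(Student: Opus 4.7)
\emph{Strategy.} I would identify the right-hand side of (\ref{1.8}) as (a multiple of) the negative $L^{2}$-gradient of the functional $\mathcal{I}$ from (\ref{1.6}), and then exploit the self-adjointness, non-negativity, and spectral gap of the associated elliptic operator to obtain exponential decay of $X_{t}$ onto its kernel, which is exactly the space of Killing vector fields on $(\mathcal{M},g)$. Write $LX := \Delta X + \nabla\mathrm{div}(X) + \mathrm{Ric}(X)$; commuting covariant derivatives on a $1$-form via the Ricci identity yields the compact formula $\nabla^{i}(\mathscr{L}_{X}g)_{ij} = (LX)_{j}$, hence
\[
D\mathcal{I}(X)\cdot Y \;=\; 2\int_{\mathcal{M}}\langle \mathscr{L}_{X}g,\mathscr{L}_{Y}g\rangle\, dV \;=\; -\,4\int_{\mathcal{M}}\langle LX,Y\rangle\, dV,
\]
so $L$ is formally $L^{2}$-self-adjoint, and setting $Y=X$ gives the key identity $\int_{\mathcal{M}}\langle -LX,X\rangle\, dV = \tfrac{1}{2}\mathcal{I}(X)\geq 0$, with $\ker L$ equal precisely to the (finite-dimensional) space $\mathscr{K}$ of Killing vector fields.

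\emph{Existence.} The principal symbol of $-L$ at a nonzero covector $\xi$ acts by $v\mapsto |\xi|^{2}v + \langle\xi,v\rangle\xi^{\sharp}$, which is strictly positive-definite, so $\partial_{t}-L$ is uniformly parabolic with smooth, time-independent coefficients on the closed manifold $\mathcal{M}$. Standard linear parabolic theory (semigroup methods on Sobolev spaces, or Galerkin together with parabolic Schauder estimates) yields short-time existence and uniqueness of a smooth solution; the monotonicity
\[
\tfrac{1}{2}\tfrac{d}{dt}\|X_{t}\|_{L^{2}}^{2} \;=\; \int_{\mathcal{M}}\langle X_{t},LX_{t}\rangle\, dV \;=\; -\tfrac{1}{2}\mathcal{I}(X_{t})\;\leq\; 0
\]
then gives a uniform $L^{2}$-bound, and higher-order energy estimates extend the solution smoothly for all $t\geq 0$.

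\emph{Convergence and the main obstacle.} Ellipticity of $-L$ on the closed manifold $\mathcal{M}$ gives a discrete spectrum $0=\lambda_{0}<\lambda_{1}\leq\lambda_{2}\leq\ldots\to\infty$, with a positive gap $\lambda_{1}>0$ on $\mathscr{K}^{\perp}$. Decompose $X_{0}=X_{\infty}+Z_{0}$, where $X_{\infty}$ is the $L^{2}$-projection onto $\mathscr{K}$ (smooth by elliptic regularity, hence a Killing field) and $Z_{0}\in\mathscr{K}^{\perp}$. Linearity of the flow and self-adjointness of $L$ imply $X_{t}=X_{\infty}+Z_{t}$ with $Z_{t}\in\mathscr{K}^{\perp}$ for all $t$, so the spectral gap yields $\|Z_{t}\|_{L^{2}}\leq e^{-\lambda_{1}t}\|Z_{0}\|_{L^{2}}$. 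The main technical step, and the point I expect to demand the most care, is upgrading this $L^{2}$-decay to uniform decay: I would apply interior parabolic Schauder estimates on unit time slabs $[t,t+1]$ to $(\partial_{t}-L)Z=0$, obtaining bounds $\|Z_{t+1}\|_{C^{k}(\mathcal{M})}\leq C_{k}\|Z_{t}\|_{L^{2}(\mathcal{M})}$ with $C_{k}$ independent of $t$; exponential $L^{2}$-decay then transfers to every $C^{k}$-norm, producing smooth and, in particular, uniform convergence $X_{t}\to X_{\infty}$.
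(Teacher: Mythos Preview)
Your proposal is a sound and standard treatment of this linear parabolic flow: the identification $\nabla^{i}(\mathscr{L}_{X}g)_{ij}=(LX)_{j}$, the self-adjointness and non-negativity of $-L$, strong parabolicity from the symbol computation, and exponential $L^{2}$-decay onto $\ker L=\mathscr{K}$ upgraded via parabolic smoothing are all correct and fit together as you describe. I see no genuine gap.

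However, there is nothing to compare against in the present paper. Theorem~\ref{t1.1} is stated in the introduction purely as background and is attributed to \cite{LL11}; the paper gives no proof, not even a sketch, and the body of the paper is devoted to entirely different results (Cheng--Yau, Li--Yau, Hamilton, and Hessian estimates under Bakry--\'Emery curvature bounds). So while your argument is a perfectly reasonable outline of how one proves such a statement, any comparison with ``the paper's own proof'' is vacuous here. If you want a genuine comparison you would need to consult the original Li--Liu preprint \cite{LL11}.
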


The above theorem does {\it not} give a nontrivial Killing vector field, since
Bochner's theorem implies that there is no nontrivial Killing vector field on a
closed Riemannian manifold with negative Ricci curvature. For more information
on the flow (\ref{1.8}), we refer to the paper \cite{LL11}. In the same paper \cite{LL11}, we give the second criterion on the existence
of Killing vector fields. This observation is based on the following identity
\begin{equation*}
\int_{\mathcal{M}}\left[(\mathscr{L}_{X}g)(X,X)+\frac{1}{2}{\rm div}(X)
|X|^{2}\right]dV=0
\end{equation*}
where $X$ is a smooth vector field on $\mathcal{M}$. A quite simple argument
showed that

\begin{theorem}\label{t1.2}{\bf (Li-Liu \cite{LL11}, 2011)} A smooth vector field $X$ on a closed and orientable
Riemannian manifold $(\mathcal{M},g)$ is Killing if and only if
\begin{equation}
0=\Delta X+\nabla{\rm div}(X)+{\rm Ric}_{-2X}(X)+\frac{1}{2}{\rm div}(X)X.
\label{1.9}
\end{equation}
\end{theorem}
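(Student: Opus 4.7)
The forward direction is straightforward. If $X$ is Killing, i.e.\ $\mathscr{L}_{X}g=0$, then tracing gives ${\rm div}(X)=0$, so the last two terms of (\ref{1.9}) vanish; moreover, by (\ref{1.2}), ${\rm Ric}_{-2X}={\rm Ric}-\tfrac{1}{2}\mathscr{L}_{-2X}g={\rm Ric}$, so ${\rm Ric}_{-2X}(X)={\rm Ric}(X)$. Equation (\ref{1.9}) collapses to $\Delta X+{\rm Ric}(X)=0$, which is the classical Bochner consequence of the Killing equation (apply $\nabla^{i}$ to $\nabla_{i}X_{j}+\nabla_{j}X_{i}=0$ and invoke the Ricci commutation $\nabla^{i}\nabla_{j}X_{i}=\nabla_{j}{\rm div}(X)+R_{jk}X^{k}$).

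For the converse I plan to take the pointwise inner product of (\ref{1.9}) with $X$ and integrate over $\mathcal{M}$. On a closed manifold the usual integrations by parts give
\[
\int_{\mathcal{M}}\langle\Delta X,X\rangle\,dV=-\int_{\mathcal{M}}|\nabla X|^{2}\,dV,\qquad\int_{\mathcal{M}}\langle\nabla{\rm div}(X),X\rangle\,dV=-\int_{\mathcal{M}}({\rm div}\,X)^{2}\,dV,
\]
while unpacking (\ref{1.2}) yields $\langle{\rm Ric}_{-2X}(X),X\rangle={\rm Ric}(X,X)+(\mathscr{L}_{X}g)(X,X)$. The preamble identity $\int_{\mathcal{M}}[(\mathscr{L}_{X}g)(X,X)+\tfrac{1}{2}{\rm div}(X)|X|^{2}]\,dV=0$ then absorbs precisely the two surviving cross-terms, leaving a clean relation expressing $\int_{\mathcal{M}}{\rm Ric}(X,X)\,dV$ in terms of $\int|\nabla X|^{2}$ and $\int({\rm div}\,X)^{2}$.

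To close the argument I invoke Yano's integral Bochner identity
\[
\int_{\mathcal{M}}{\rm Ric}(X,X)\,dV=\int_{\mathcal{M}}\bigl[|\nabla X|^{2}+({\rm div}\,X)^{2}-\tfrac{1}{2}|\mathscr{L}_{X}g|^{2}\bigr]dV,
\]
itself a one-line consequence of integrating $\nabla_{i}(X_{j}\nabla^{j}X^{i})$ against the volume form and applying the Ricci commutation used above. Substituting this into the previous display cancels the $|\nabla X|^{2}$ and $({\rm div}\,X)^{2}$ contributions, forcing $\int_{\mathcal{M}}|\mathscr{L}_{X}g|^{2}\,dV=0$; hence $\mathscr{L}_{X}g\equiv 0$ and $X$ is Killing by definition.

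The only delicate point is the bookkeeping of constants: the factor $\tfrac{1}{2}$ in front of ${\rm div}(X)X$ in (\ref{1.9}), the factor $\tfrac{1}{2}$ inside the preamble identity, and the factor $\tfrac{1}{2}$ hidden in the Bakry-Emery definition ${\rm Ric}_{V}={\rm Ric}-\tfrac{1}{2}\mathscr{L}_{V}g$ must conspire so that, after Yano's formula is applied, only $\int|\mathscr{L}_{X}g|^{2}$ survives. Once these coefficients are verified the argument is entirely formal, which is presumably why the authors describe it as a ``quite simple argument''.
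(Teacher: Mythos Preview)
Your argument is correct and matches the route the paper sketches. Note that the paper does not actually supply a proof of Theorem~\ref{t1.2}; it merely cites \cite{LL11} and records the key integral identity $\int_{\mathcal{M}}[(\mathscr{L}_{X}g)(X,X)+\tfrac12\,{\rm div}(X)|X|^{2}]\,dV=0$, remarking that a ``quite simple argument'' then yields the theorem. Your proof is precisely that simple argument: pair (\ref{1.9}) with $X$, integrate, use the displayed identity to kill the $(\mathscr{L}_{X}g)(X,X)+\tfrac12\,{\rm div}(X)|X|^{2}$ contribution, and then invoke Yano's formula to recognise the remaining term $\int\langle X,\Delta X+\nabla{\rm div}(X)+{\rm Ric}(X)\rangle$ as $-\tfrac12\int|\mathscr{L}_{X}g|^{2}$. (Equivalently, this last step is the $f\equiv0$ case of Lott's observation quoted just before Theorem~\ref{t1.3}, or the statement that $\mathcal{I}(X)=\int|\mathscr{L}_{X}g|^{2}$ is quadratic with Euler--Lagrange operator (\ref{1.7}).) The constants do line up exactly as you anticipated; your verification that ${\rm Ric}_{-2X}(X,X)={\rm Ric}(X,X)+(\mathscr{L}_{X}g)(X,X)$ from (\ref{1.2}) is the one place where the Bakry--Emery normalisation matters, and you have it right.
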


The third criterion in \cite{LL11} is based on Lott's observation
\cite{Lott03}:
\begin{equation*}
\int_{\mathcal{M}}|\mathscr{L}_{X}g|^{2}e^{-f}dV=-
\int_{\mathcal{M}}\left\langle X,\Delta_{f}X+\nabla{\rm div}_{f}(X)
+{\rm Ric}_{f}(X)\right\rangle e^{f}dV.
\end{equation*}
The we proved the following

\begin{theorem}\label{t1.3}{\bf (Li-Liu \cite{LL11}, 2011)} Given any smooth function $f$ on a closed and orientable
Riemannian manifold $(\mathcal{M},g)$. A smooth vector field $X$ is Killing if and
only if it satisfies
\begin{equation}
0=\Delta X^{i}+\nabla^{i}{\rm div}(X)+R^{i}{}_{j}X^{j}+\nabla_{j}
f(\mathscr{L}_{X}g)^{ij}.\label{1.10}
\end{equation}
In particular, $X$ is Killing if and only if
\begin{equation}
0=\Delta X^{i}+\nabla^{i}{\rm div}(X)+R^{i}{}_{j}X^{j}
+\nabla_{j}{\rm div}(X)(\mathscr{L}_{X}g)^{ij}.\label{1.11}
\end{equation}
\end{theorem}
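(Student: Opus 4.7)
The plan is to derive both equivalences from Lott's weighted integral identity displayed just before the theorem. The central computational step is to rewrite the integrand on its right-hand side in terms of the unweighted operators and the chosen function $f$.

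Using the paper's conventions $\Delta_{f}=\Delta+\nabla_{\nabla f}$, ${\rm div}_{f}(X)={\rm div}(X)+\langle\nabla f,X\rangle$, and ${\rm Ric}_{f}={\rm Ric}-\nabla^{2}f$, I would expand $\Delta_{f}X+\nabla{\rm div}_{f}(X)+{\rm Ric}_{f}(X)$ coordinate-wise. Two Hessian-of-$f$ contributions arise with opposite signs, one from $\nabla^{i}{\rm div}_{f}(X)$ and one from $({\rm Ric}_{f})^{i}{}_{j}X^{j}$, and cancel; meanwhile the three remaining first-derivative-of-$f$ terms symmetrize into $\nabla_{j}f\,(\mathscr{L}_{X}g)^{ij}$. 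The result is the pointwise identity
\begin{equation*}
\Delta_{f}X^{i}+\nabla^{i}{\rm div}_{f}(X)+({\rm Ric}_{f})^{i}{}_{j}X^{j}=\Delta X^{i}+\nabla^{i}{\rm div}(X)+R^{i}{}_{j}X^{j}+\nabla_{j}f\,(\mathscr{L}_{X}g)^{ij}.
\end{equation*}
Substituting this into Lott's identity yields, with a positive exponential weight $w$,
\begin{equation*}
\int_{\mathcal{M}}|\mathscr{L}_{X}g|^{2}\,w\,dV=-\int_{\mathcal{M}}X_{i}\bigl[\Delta X^{i}+\nabla^{i}{\rm div}(X)+R^{i}{}_{j}X^{j}+\nabla_{j}f(\mathscr{L}_{X}g)^{ij}\bigr]\,w\,dV.
\end{equation*}

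The equivalence (\ref{1.10}) follows at once. For the ($\Leftarrow$) direction, if the bracket vanishes pointwise, the right-hand integral is zero, and nonnegativity of the left-hand integrand forces $\mathscr{L}_{X}g\equiv 0$, so $X$ is Killing. For the ($\Rightarrow$) direction, if $X$ is Killing then $\mathscr{L}_{X}g=0$ immediately kills the last term of (\ref{1.10}) and forces ${\rm div}(X)=\tfrac{1}{2}g^{ij}(\mathscr{L}_{X}g)_{ij}=0$; the classical Bochner identity $\Delta X+{\rm Ric}(X)=0$ for Killing fields then handles the remaining three terms. Equation (\ref{1.11}) is the specialization of (\ref{1.10}) obtained by plugging in $f={\rm div}(X)$, which is legitimate since (\ref{1.10}) is asserted for \emph{every} smooth function $f$.

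The main obstacle is purely bookkeeping: carrying out the pointwise expansion with all sign conventions consistent, and in particular verifying the cancellation of the two $\nabla^{2}f$ terms. The supporting fact that $\Delta X+{\rm Ric}(X)=0$ pointwise for any Killing vector field is standard, following from commuting covariant derivatives in $\nabla_{i}X_{j}+\nabla_{j}X_{i}=0$ combined with the first Bianchi identity.
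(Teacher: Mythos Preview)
Your proposal is correct and follows exactly the route the paper indicates: the paper does not give a full proof of Theorem~\ref{t1.3} but states that it is ``based on Lott's observation'' (the weighted integral identity displayed just before the theorem), and your argument carries this out in detail. The pointwise expansion you describe is right---the two $\nabla^{2}f$ contributions cancel and the remaining $\nabla f$ terms assemble into $\nabla_{j}f\,(\mathscr{L}_{X}g)^{ij}$---and the two directions of the equivalence, together with the specialization $f={\rm div}(X)$ for (\ref{1.11}), are handled correctly.
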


Those elliptic equations (\ref{1.9})--(\ref{1.10}) can be made into
the corresponding parabolic equations which may play well in the study
of the existence of nontrivial Killing vector fields and moreover in the
study of Hopf's conjecture and Yau's problem.

We now state our main results in this paper. The first three results are
about Cheng-Yau estimates for complete Riemannian manifold with
${\rm Ric}^{n,m}_{V}$ bounded from below.

\begin{theorem}\label{t1.4} Let $(\mathcal{M},g)$ be a compact $m$-dimensional
Riemannian manifold with ${\rm Ric}^{n,m}_{V}\geq-K$, where $K\geq0$ is
a constant. If $u$ is a solution of $\Delta_{V}u=0$ which is bounded from
below, then
\begin{equation}
|\nabla u|\leq \sqrt{(n-1)K}\left(u-\inf_{\mathcal{M}}u\right).\label{1.12}
\end{equation}
In particular, if ${\rm Ric}^{n,m}_{V}\geq0$, then every positive solution of
$\Delta_{V}u=0$ must be constant.
\end{theorem}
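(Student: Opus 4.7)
My plan is to adapt the classical Cheng--Yau gradient estimate to the drift Laplacian $\Delta_V$. First I would reduce to the case of a strictly positive solution by fixing $\epsilon>0$ and setting $v:=u-\inf_{\mathcal{M}}u+\epsilon$, so that $v\geq\epsilon>0$ and $\Delta_V v=0$, and then pass to the logarithmic transform $h:=\log v$. A routine chain-rule computation converts the equation $\Delta_V u=0$ into the soliton-like identity $\Delta_V h = -|\nabla h|^2$. Writing $F:=|\nabla h|^2$, my target becomes the bound $F\leq(n-1)K$; multiplying by $v^2$ and sending $\epsilon\downarrow 0$ will then give \eqref{1.12}, and the ``in particular'' assertion reduces to the case $K=0$.

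The main machinery is the weighted Bochner identity
\begin{equation*}
\tfrac{1}{2}\Delta_V |\nabla h|^2 = |\nabla^2 h|^2 + \langle\nabla\Delta_V h,\nabla h\rangle + {\rm Ric}_V(\nabla h,\nabla h),
\end{equation*}
which I would establish as a preliminary by combining the classical Bochner formula with the commutator identity $\tfrac{1}{2}\langle V,\nabla|\nabla h|^2\rangle = \langle\nabla\langle V,\nabla h\rangle,\nabla h\rangle - \tfrac{1}{2}(\mathscr{L}_V g)(\nabla h,\nabla h)$; the $\mathscr{L}_V g$ piece is exactly what turns ${\rm Ric}$ into ${\rm Ric}_V$. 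Substituting $\Delta_V h=-F$, decomposing ${\rm Ric}_V={\rm Ric}^{n,m}_V + \tfrac{1}{n-m}V\otimes V$ via \eqref{1.2}, and inserting the hypothesis ${\rm Ric}^{n,m}_V\geq -K$, the identity becomes the differential inequality
\begin{equation*}
\tfrac{1}{2}\Delta_V F \geq |\nabla^2 h|^2 + \tfrac{\langle V,\nabla h\rangle^2}{n-m} - \langle\nabla F,\nabla h\rangle - K F.
\end{equation*}

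Since $\mathcal{M}$ is compact, $F$ attains a maximum at some $p_0$, where $\nabla F(p_0)=0$ and $\Delta_V F(p_0)\leq 0$. I would then extract two sharp algebraic inequalities at $p_0$. First, $\nabla F(p_0)=0$ forces $\nabla h$ to be a null direction of $\nabla^2 h$; choosing an orthonormal frame with $e_1=\nabla h/|\nabla h|$ makes the first row and column of the Hessian matrix vanish, so applying Cauchy--Schwarz to the remaining $(m-1)\times(m-1)$ block yields the sharpened trace bound $|\nabla^2 h|^2 \geq (\Delta h)^2/(m-1)$. Second, setting $X:=\langle V,\nabla h\rangle$ and using $\Delta h = -F - X$, the expression $\tfrac{(F+X)^2}{m-1}+\tfrac{X^2}{n-m}$ is a quadratic in $X$ whose minimum over $X\in{\bf R}$ equals $F^2/(n-1)$, thanks to the telescoping $(m-1)+(n-m)=n-1$. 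Feeding both into the Bochner inequality at $p_0$ yields $0\geq F(p_0)^2/(n-1) - K F(p_0)$, hence $F\leq (n-1)K$ throughout $\mathcal{M}$.

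The only delicate step is extracting the sharp constant $n-1$ rather than the easier $n$: using only the generic inequality $|\nabla^2 h|^2\geq (\Delta h)^2/m$ would lead to $F\leq nK$, which is just off. The improvement requires exploiting the critical-point information $\nabla F(p_0)=0$ to gain one dimension in the Hessian--trace bound and then pairing it optimally with the drift contribution $\langle V,\nabla h\rangle^2/(n-m)$ coming from the ${\rm Ric}_V$--to--${\rm Ric}^{n,m}_V$ decomposition; once this structure is in place, the remaining steps are the standard maximum-principle mechanics.
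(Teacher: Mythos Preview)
Your proposal is correct and follows the same Cheng--Yau strategy as the paper, but the packaging differs in a way worth noting. The paper works with $\phi=|\nabla u|/u$ rather than $F=|\nabla h|^2=\phi^2$, and first establishes a pointwise refined Kato inequality (their Corollary~2.2),
\[
|\nabla u|\,\Delta_V|\nabla u|\ \ge\ \frac{1}{n-1}\bigl|\nabla|\nabla u|\bigr|^2+{\rm Ric}^{n,m}_V(\nabla u,\nabla u),
\]
valid at \emph{every} point; the $1/(n-1)$ there comes from combining the harmonic equation $\sum_{i\ge2}u_{ii}=-(u_{11}+V_1u_1)$ with a Cauchy--Schwarz step and the parameter choice $\alpha=(n-m)/(m-1)$. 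From this they derive $\Delta_V\phi\ge -K\phi-\bigl(2-\tfrac{2}{n-1}\bigr)u^{-1}\langle\nabla\phi,\nabla u\rangle+\tfrac{1}{n-1}\phi^3$ and read off $\phi\le\sqrt{(n-1)K}$ at the maximum. You instead apply Bochner directly to $F$ and obtain the sharp $1/(n-1)$ only at the critical point, by using $\nabla F(p_0)=0$ to kill one row of $\nabla^2h$ and then minimizing the resulting quadratic in $X=\langle V,\nabla h\rangle$. Both mechanisms are standard; the paper's buys a differential inequality valid globally (useful if one later localizes via cutoff, as in their Theorem~3.4), while yours is slightly leaner for the compact maximum-principle argument at hand. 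Your $\epsilon$-regularization is also a bit more careful than the paper's direct replacement $u\mapsto u-\inf u$, which vanishes at the minimum.
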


\begin{theorem}\label{t1.5} Let $(\mathcal{M},g)$ be a complete $m$-dimensional Riemannian
manifold with ${\rm Ric}^{n,m}_{V}\geq-(n-1)K$ where $K\geq0$ is a constant. If $u$ is a positive solution of $\Delta_{V}u=0$ on $\mathcal{M}$, for any $r>0$, we have
\begin{equation}
\sup_{B(x,r/2)}\frac{|\nabla u|}{u}\leq 8(n-1)\left(\frac{1}{r}
+\sqrt{K}\right).\label{1.13}
\end{equation}
\end{theorem}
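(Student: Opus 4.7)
The plan is to adapt the classical Cheng--Yau gradient estimate to the weighted setting. Since $u>0$, set $f:=\log u$, so that $\Delta_{V}u=0$ becomes
\[
\Delta_{V}f+|\nabla f|^{2}=0.
\]
Write $w:=|\nabla f|^{2}$ and apply the weighted Bochner identity
\[
\tfrac{1}{2}\Delta_{V}w = |\nabla^{2}f|^{2} + \langle\nabla f,\nabla\Delta_{V}f\rangle + {\rm Ric}_{V}(\nabla f,\nabla f).
\]
Using the decomposition ${\rm Ric}_{V}={\rm Ric}^{n,m}_{V}+\tfrac{1}{n-m}V\otimes V$ together with the Cauchy--Schwarz refinement
\[
|\nabla^{2}f|^{2}+\frac{\langle V,\nabla f\rangle^{2}}{n-m} \geq \frac{(\Delta f+\langle V,\nabla f\rangle)^{2}}{n} = \frac{(\Delta_{V}f)^{2}}{n} = \frac{w^{2}}{n},
\]
and the hypothesis ${\rm Ric}^{n,m}_{V}\geq-(n-1)K$, I obtain the pointwise inequality
\[
\tfrac{1}{2}\Delta_{V}w \geq \frac{w^{2}}{n} - \langle\nabla w,\nabla f\rangle - (n-1)Kw.
\]

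Next I localize via a smooth cutoff $\eta(x)=\phi(d(x,x_{0})/r)$, with $\phi\equiv 1$ on $[0,\tfrac{1}{2}]$, $\phi\equiv 0$ outside $[0,1]$, and satisfying the standard size bounds $|\phi'|^{2}/\phi\leq C$ and $\phi''\geq -C$. The essential geometric input is the weighted Laplacian comparison under ${\rm Ric}^{n,m}_{V}\geq-(n-1)K$, namely
\[
\Delta_{V}\rho \leq (n-1)\sqrt{K}\coth(\sqrt{K}\rho) \leq (n-1)\left(\frac{1}{\rho}+\sqrt{K}\right),
\]
which is where the effective dimension $n$, rather than the actual dimension $m$, enters. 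This yields $\Delta_{V}\eta\geq -C(n-1)(1/r^{2}+\sqrt{K}/r)$ on the annulus where $\phi'\neq 0$, and Calabi's trick handles the non-smoothness of $\rho$ at the cut locus.

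Setting $F:=\eta w$ and taking a maximum point $x^{\ast}\in\overline{B(x_{0},r)}$, the relations $\nabla F(x^{\ast})=0$ and $\Delta_{V}F(x^{\ast})\leq 0$ give $\nabla w=-(w/\eta)\nabla\eta$ and
\[
0 \geq \eta\Delta_{V}w + w\Delta_{V}\eta - \frac{2w|\nabla\eta|^{2}}{\eta}.
\]
Multiplying the Bochner-derived inequality by $\eta$, substituting for $\langle\nabla w,\nabla f\rangle$, and absorbing the cross term $w\langle\nabla\eta,\nabla f\rangle\leq |\nabla\eta|\,w^{3/2}$ into $\eta w^{2}/n$ by Young's inequality with weight $\tfrac{1}{2n}$, I arrive at a quadratic inequality of the form
\[
\frac{\eta(x^{\ast})w(x^{\ast})^{2}}{2n} \leq C(n)\left(\frac{w|\nabla\eta|^{2}}{\eta} + |w\Delta_{V}\eta| + (n-1)K\eta w\right)\bigg|_{x^{\ast}}.
\]
Substituting the cutoff bounds and the Laplacian comparison, this reduces to an inequality of the form $\eta w\leq C(n-1)^{2}(1/r^{2}+K)$ at $x^{\ast}$, and solving with carefully chosen constants gives $\eta(x^{\ast})w(x^{\ast})\leq 64(n-1)^{2}(1/r+\sqrt{K})^{2}$. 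Since $\eta\equiv 1$ on $B(x_{0},r/2)$, taking square roots yields \eqref{1.13}.

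The main obstacle is not conceptual but bookkeeping: obtaining the explicit constant $8$ in \eqref{1.13} requires balancing the Young's inequality weight against the coefficient $1/n$ produced by the dimensional refinement of Bochner, and controlling the cutoff-generated terms $|\nabla\eta|^{2}/\eta$ and $\Delta_{V}\eta$ simultaneously so that their contributions combine into the sharp prefactor. The novelty relative to the unweighted Cheng--Yau argument lies precisely in replacing the trace inequality $|\nabla^{2}f|^{2}\geq(\Delta f)^{2}/m$ by its weighted analogue with effective dimension $n$, which is what allows ${\rm Ric}^{n,m}_{V}$ (rather than ${\rm Ric}_{V}$, whose drift term is a priori unbounded) to appear as the curvature lower bound.
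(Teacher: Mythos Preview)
Your approach is correct in outline but differs from the paper's in two non-trivial ways, and the combination makes the exact constant $8(n-1)$ doubtful by your route.

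First, the paper does not work with $w=|\nabla f|^{2}$ and the trace inequality $|\nabla^{2}f|^{2}+\tfrac{1}{n-m}\langle V,\nabla f\rangle^{2}\geq (\Delta_{V}f)^{2}/n$. Instead it applies a refined Kato inequality (Corollary~\ref{c2.2}) to $\phi:=|\nabla u|/u$: choosing normal coordinates with $u_{1}=|\nabla u|$ and using $\Delta_{V}u=0$ to eliminate $u_{11}$ yields
\[
\Delta_{V}\phi\;\geq\;-(n-1)K\phi-\Bigl(2-\tfrac{2}{n-1}\Bigr)\frac{\langle\nabla\phi,\nabla u\rangle}{u}+\frac{1}{n-1}\,\phi^{3}.
\]
The leading coefficient is $1/(n-1)$, not $1/n$; this is the Cheng--Yau/Schoen--Yau improvement over the plain Bochner trace, and it is precisely what drives the final prefactor $(n-1)$ rather than $n$.

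Second, the paper localizes with the explicit polynomial weight $F(y)=(r^{2}-d(x,y)^{2})\phi(y)$ rather than a generic smooth cutoff. At the interior maximum one substitutes $\nabla\phi/\phi=\nabla(d^{2})/(r^{2}-d^{2})$ and the comparison $\Delta_{V}(d^{2})\leq 2+2(n-1)(1+\sqrt{K}d)$, obtaining a clean quadratic $\tfrac{1}{n-1}F^{2}-\tfrac{4(n-2)}{n-1}rF\leq \cdots$ whose solution gives $F\leq 4\sqrt{2}(n-1)r(1+\sqrt{K}r)$ and hence $\sup_{B(x,r/2)}\phi\leq \tfrac{16\sqrt{2}}{3}(n-1)(\tfrac{1}{r}+\sqrt{K})$, which is then rounded up to $8(n-1)$.

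Your argument will certainly produce an estimate of the form $C(n)(1/r+\sqrt{K})$, but with the $1/n$ coefficient and a generic cutoff (plus a Young's inequality absorbing the cross term) the sharp tracking to $8(n-1)$ is not automatic; you would more naturally land on a constant of size $Cn$. If you want the stated constant, replace your trace step by the refined Kato inequality for $|\nabla u|$ (this is where harmonicity is exploited beyond the equation $\Delta_{V}f=-w$), and use the explicit $(r^{2}-d^{2})$ weight so the resulting quadratic can be solved by hand.
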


\begin{corollary}\label{c1.6} Let $(\mathcal{M},g)$ be a complete $m$-dimensional Riemannian manifold with ${\rm Ric}^{n,m}_{V}\geq-(n-1)K$ where $K\geq0$ is a constant.
\begin{itemize}

\item[(i)] If $(\mathcal{M},g)$ is noncompact and $u$ is a positive solution of $\Delta_{V}u=0$ on $\mathcal{M}$, then
    \begin{equation}
    \sup_{\mathcal{M}}\frac{|\nabla u|}{u}\leq 8(n-1)\sqrt{K}.\label{1.14}
    \end{equation}

\item[(ii)] If $u$ is a solution of $\Delta_{V}u=0$ on a geodesic ball $B(x,r)$, then
\begin{equation}
\sup_{B(x,r/2)}|\nabla u|\leq 16(n-1)\left(\frac{1}{r}+\sqrt{K}\right)
\sup_{B(x,r)}|u|.\label{1.15}
\end{equation}

\item[(iii)] If $u$ is a positive solution of $\Delta_{V}u=0$ on a geodesic ball $B(x,r)$, then
\begin{equation}
\sup_{B(x,r/2)}u\leq e^{8(n-1)(1+2r\sqrt{K})}
\inf_{B(x,r/2)}u.\label{1.16}
\end{equation}
\end{itemize}
\end{corollary}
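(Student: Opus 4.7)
The plan is a direct deduction from Theorem~\ref{t1.5}: each of the three parts reduces to that local gradient estimate by either a limiting argument, a shift, or integration along a path.

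For part (i), I would fix an arbitrary point $y\in\mathcal{M}$. Since $(\mathcal{M},g)$ is complete and noncompact, the geodesic ball $B(y,2r)$ makes sense for every $r>0$. Applying Theorem~\ref{t1.5} on $B(y,2r)$ and evaluating the resulting estimate on $B(y,r)$ yields
\[
\frac{|\nabla u(y)|}{u(y)}\;\leq\;8(n-1)\Bigl(\tfrac{1}{2r}+\sqrt{K}\Bigr).
\]
Letting $r\to\infty$ gives the pointwise bound at $y$, and since $y\in\mathcal{M}$ is arbitrary, (\ref{1.14}) follows.

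For part (ii), the obstruction is that the solution $u$ is not assumed to be positive. The standard fix is to shift: setting $M:=\sup_{B(x,r)}|u|$ and, for $\epsilon>0$, $v:=u+M+\epsilon$, one obtains a positive solution of $\Delta_{V}v=0$ on $B(x,r)$ (constants are annihilated by $\Delta_{V}$ because $\nabla$ of a constant is zero), satisfying $\epsilon\leq v\leq 2M+\epsilon$. Theorem~\ref{t1.5} then yields
\[
|\nabla u| \;=\; |\nabla v| \;\leq\; 8(n-1)\Bigl(\tfrac{1}{r}+\sqrt{K}\Bigr)v \;\leq\; 8(n-1)\Bigl(\tfrac{1}{r}+\sqrt{K}\Bigr)(2M+\epsilon)
\]
on $B(x,r/2)$. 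Sending $\epsilon\downarrow 0$ gives the desired estimate (\ref{1.15}).

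For part (iii), I would derive the Harnack inequality from (\ref{1.13}) by integration. Given any two points $y,z\in B(x,r/2)$, the concatenation of a minimizing geodesic from $y$ to $x$ with a minimizing geodesic from $x$ to $z$ is a piecewise smooth curve $\gamma$ lying inside $\overline{B}(x,r/2)$, of total length at most $r$. Since $|\nabla\log u|\leq 8(n-1)(1/r+\sqrt{K})$ on $B(x,r/2)$ by Theorem~\ref{t1.5}, the fundamental theorem of calculus gives
\[
\bigl|\log u(y)-\log u(z)\bigr| \;\leq\; \int_{\gamma}|\nabla\log u|\;\leq\; 8(n-1)\bigl(1+r\sqrt{K}\bigr),
\]
and exponentiating produces (\ref{1.16}) (the stated $2r\sqrt{K}$ in place of $r\sqrt{K}$ in the exponent is a harmless slack).

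The main obstacle is essentially bookkeeping; the least automatic step is the shift in~(ii), where one must observe that $\Delta_{V}$ kills the additive constant, and the path construction in~(iii), where one needs to stay inside the region $B(x,r/2)$ on which the gradient estimate is valid—this is exactly why routing the path through the center $x$ (rather than along a direct minimizing geodesic) is required.
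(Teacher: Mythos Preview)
Your argument is correct and is exactly the standard deduction the paper has in mind: the paper gives no detailed proof of this corollary, merely stating that it follows ``as an immediate consequence'' of Theorem~\ref{t1.5} ``parallel to these in \cite{SchoenYau94, Li05}'', and your three steps (let $r\to\infty$; shift by $\sup|u|+\epsilon$; integrate $|\nabla\log u|$ along a broken geodesic through the center) are precisely those standard arguments. Your observation in~(iii) that routing through $x$ keeps the path in $B(x,r/2)$ and yields the sharper exponent $8(n-1)(1+r\sqrt{K})$ is correct; the paper's $2r\sqrt{K}$ is indeed just slack.
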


When $V\equiv0$, those estimates are the classical results
\cite{ChengYau75, SchoenYau94}. If $V$ is gradient, the above results
reduce to those of \cite{Li05}.

Recall that \cite{Grigoryan09} a triple $(\mathcal{M}, g, \mu)$ is called a weighted Riemannian manifold, if $(\mathcal{M},g)$ is a Riemannian manifold and $\mu$ is a measure on
$\mathcal{M}$ with a smooth positive density function $f$ (that is, $d\mu=fd
V_{g}$). The {\it weighted divergence} and the {\it weighted Laplace operator} are defined by
\begin{equation*}
{\rm div}_{\mu}=\frac{1}{f}{\rm div}(f\!\ ), \ \ \ \Delta_{\mu}:=
{\rm div}_{\mu}\circ\nabla
\end{equation*}
respectively, where $\nabla$ is the Levi-Civita connection of $g$. There are two
examples of $\Delta_{\mu}$:
\begin{itemize}

\item[(a)] When $V=\nabla f$, the operator $\Delta_{V}$ is exactly the weighted
Laplace operator of the weighted Riemannian manifold $(\mathcal{M}, g,\mu)$
where $\mu=e^{f}dV_{g})$. Indeed,
\begin{equation*}
\Delta_{\mu}=\frac{1}{e^{f}}{\rm div}(e^{f}\nabla\!\ )=\frac{1}{e^{f}}
\left(e^{f}\Delta+\langle\nabla e^{f},\nabla\!\ \rangle\right)=\Delta+\langle
\nabla f,\nabla\!\ \rangle=:\Delta_{f}.
\end{equation*}

\item[(b)] In \cite{Mastrolia-Rigoli10}, the authors introduced a diffusion-type operator
\begin{equation*}
L=\frac{1}{B}{\rm div}(A\nabla\!\ )
\end{equation*}
where $A, B$ are some sufficiently smooth positive functions on $\mathcal{M}$. Set
\begin{equation*}
\tilde{g}:=\frac{B}{A}g, \ \ \ d\tilde{\mu}:=BdV_{g}.
\end{equation*}
Then $L$ is the weighted Laplace operator of the weighted Riemannian manifold $(\mathcal{M},
\tilde{g},\tilde{\mu})$ since
\begin{equation*}
\widetilde{\Delta}_{\tilde{\mu}}=
{\rm div}_{\tilde{\mu}}\circ\widetilde{\nabla}
=\frac{1}{B}{\rm div}\left(B\frac{A}{B}\nabla\!\ \right)=L.
\end{equation*}

\end{itemize}
In both cases, $\Delta_{f}$ or $L$ can be viewed as the special case of
$\Delta_{V}$ on some Riemannian manifold.

\begin{theorem}\label{t1.7} Let $(\mathcal{M},g)$ be a complete $m$-dimensional Riemannian
manifold with ${\rm Ric}^{n,m}_{V}\geq-(n-1)K(1+d^{2})^{\delta/2}$, where
$K\geq0$, $\delta<4$, and $d$ denotes the distance function from a fixed
point. If $F\in C^{1}({\bf R})$ and $u\in C^{3}(\mathcal{M})$ is a global solution of
\begin{equation*}
\Delta_{V}u=F(u)
\end{equation*}
with
\begin{equation*}
|u|\leq D(1+d)^{\nu}, \ \ \ F'(u)\geq(n-1)K(1+d^{2})^{\delta/2}
\end{equation*}
on $\mathcal{M}$ for some constants $D>0$ and $0<\nu<\min\{1,1-\frac{\delta}{4}\}$, then $u$ must be constant.
\end{theorem}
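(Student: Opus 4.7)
The plan is to establish a subharmonicity $\Delta_V|\nabla u|^2\geq 0$ via a Bakry--\'Emery Bochner formula, and then to conclude $\nabla u\equiv 0$ from the sublinear growth of $u$ through a weak maximum principle at infinity calibrated to the prescribed curvature growth.

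First I would write down the weighted Bochner identity
\[
\tfrac{1}{2}\Delta_V|\nabla u|^2 = |\nabla^2 u|^2 + \langle\nabla u,\nabla\Delta_V u\rangle + {\rm Ric}_V(\nabla u,\nabla u),
\]
and combine it with the standard refinement
\[
|\nabla^2 u|^2 \geq \tfrac{1}{n}(\Delta_V u)^2 - \tfrac{1}{n-m}\langle V,\nabla u\rangle^2,
\]
which comes from $|\nabla^2 u|^2\geq(\Delta u)^2/m$ applied to $\Delta u=\Delta_V u-\langle V,\nabla u\rangle$ with the optimal Cauchy--Schwarz weight $m:(n-m)$. The correction term built from $V$ absorbs into ${\rm Ric}_V$ to produce exactly ${\rm Ric}^{n,m}_V$, giving
\[
\tfrac{1}{2}\Delta_V|\nabla u|^2 \geq \tfrac{1}{n}(\Delta_V u)^2 + \langle\nabla u,\nabla\Delta_V u\rangle + {\rm Ric}^{n,m}_V(\nabla u,\nabla u).
\]
Substituting $\Delta_V u = F(u)$ (so that $\langle\nabla u,\nabla\Delta_V u\rangle = F'(u)|\nabla u|^2$), and then using the two pointwise hypotheses on ${\rm Ric}^{n,m}_V$ and on $F'(u)$ to cancel the $|\nabla u|^2$ terms exactly, I arrive at the clean subharmonicity
\[
\Delta_V|\nabla u|^2 \geq \tfrac{2}{n}F(u)^2 \geq 0.
\]

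To conclude from this that $u$ is constant, I would invoke a weak maximum principle at infinity for $\Delta_V$, valid under the Bakry--\'Emery lower bound ${\rm Ric}^{n,m}_V\geq-(n-1)K(1+d^2)^{\delta/2}$ with $\delta<4$ (in the spirit of Pigola--Rigoli--Setti, and in the weighted setting of Mastrolia--Rigoli \cite{Mastrolia-Rigoli10}). To feed this principle one needs an upper bound on $|\nabla u|^2$ at large distance; I would obtain it by a Bernstein-type cutoff estimate on geodesic balls $B(x,r(x))$ of radius $r(x)\sim(1+d(x))^{1-\delta/4}$, chosen so that $r\sqrt{K(1+d^2)^{\delta/2}}\lesssim 1$ while the oscillation of $u$ on such a ball is still $O((1+d)^\nu)$. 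The sharp numeric balance $\nu<1-\delta/4$ is precisely what forces the resulting estimate on $|\nabla u|^2$ to be tight enough for the maximum principle, applied to the subharmonic $|\nabla u|^2$, to give $\sup|\nabla u|^2=0$. The main obstacle is this last step: because $V$ is a general (not necessarily gradient) vector field, $\Delta_V$ admits no canonical symmetric weighted measure, so classical integration-by-parts or Moser-iteration arguments are unavailable; the cutoff estimate must proceed pointwise, with the radius $r(x)$ calibrated carefully against \emph{both} the curvature growth $(1+d^2)^{\delta/2}$ and the solution growth $(1+d)^\nu$, and it is exactly here that the hypothesis $0<\nu<\min\{1,1-\delta/4\}$ is used crisply.
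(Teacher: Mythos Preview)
Your Bochner step is correct and matches equation~(\ref{2.2}) in the paper: the pointwise hypotheses on ${\rm Ric}^{n,m}_V$ and on $F'(u)$ are calibrated to cancel, yielding $\Delta_V|\nabla u|^2\geq\tfrac{2}{n}F(u)^2\geq 0$. The problem lies entirely in the second half of your argument.

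First, the weak maximum principle at infinity, applied to a nonnegative $\Delta_V$-subsolution $w=|\nabla u|^2$ with $\Delta_V w\geq 0$ (or even $\Delta_V w\geq\tfrac{2}{n}F(u)^2$), does \emph{not} force $\sup w=0$. Along an Omori--Yau maximizing sequence you would only learn $F(u(x_k))\to 0$, which says nothing about $|\nabla u|^2$. So the subharmonicity by itself is not strong enough; you genuinely need a local gradient bound that decays.

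Second, your ``Bernstein-type cutoff estimate'' is exactly where the real work is, and the naive cutoff $H=(r^2-d^2)^2|\nabla u|^2$ does not close: at its interior maximum the differential inequality you obtain (this is Lemma~\ref{l4.1} with $G\equiv 1$) has no term quadratic in $|\nabla u|$, so it produces no upper bound for $H$. The paper's route, following Mastrolia--Rigoli, builds in an auxiliary weight $G(u)$ and works with
\[
H=(r^2-d^2)^2\,|\nabla u|^2\,G(u).
\]
The point of $G$ is that the resulting inequality (Lemma~\ref{l4.1}) contains the extra term $\dfrac{2GG''-3(G')^2}{2G^2}\,|\nabla u|^2$, and a choice such as $G(u)=(A-u)^{-\beta}$ with $0<\beta<2$ and $A$ slightly above $\sup_{B(p,r)}u\le D(1+r)^{\nu}$ makes this coefficient strictly positive. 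That is what converts the differential inequality at the maximum into a genuine bound on $|\nabla u|^2$ in terms of $r$, the effective curvature $K(1+r^2)^{\delta/2}$, and the oscillation scale $(1+r)^{\nu}$; letting $r\to\infty$ then kills $|\nabla u|(p)$ precisely under the numerology $\nu<\min\{1,1-\delta/4\}$. In short, your plan separates ``subharmonicity'' from ``local gradient estimate'', but the latter cannot be done without the $G$-weighted cutoff of Lemma~\ref{l4.1}, which is the actual engine of the proof; once you have that lemma, the subharmonicity step and the weak maximum principle are both unnecessary.
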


Theorem \ref{t1.7} generalized the similar result in
\cite{Mastrolia10, Mastrolia-Rigoli10}. The proof is based on variants of $V$-Bochner-Weitzenb\"ock formula stated in Section \ref{section2}.

Next three estimates are about Li-Yau gradient estimates for positive solutions
of weighted heat type equation on a complete Riemannian manifold, and extend the corresponding
results in \cite{ZhuLi13} from heat type equation to weighted heat type equation.

\begin{theorem}\label{t1.8} Let $(\mathcal{M},g)$ be a compact $m$-dimensional Riemannian
manifold with ${\rm Ric}^{n,m}_{V}\geq0$. Suppose that the boundary $\partial\mathcal{M}$ of $\mathcal{M}$ is convex whenever $\partial\mathcal{M}\neq\emptyset$. Let $u$ be a positive solution of
\begin{equation*}
\left(\Delta_{V}-\partial_{t}\right)u=au\ln u
\end{equation*}
on $\mathcal{M}\times(0,T]$ for some constant $a$, with Neumann boundary condition $\frac{\partial u}{\partial\nu}=0$ on $\partial\mathcal{M}\times(0,T]$.
\begin{itemize}

\item[(1)] If $q\leq0$ then
\begin{equation*}
\frac{|\nabla u|^{2}}{u^{2}}
-\frac{u_{t}}{u}-a\ln u\leq\frac{n}{2t}-\frac{na}{2}
\end{equation*}
on $\mathcal{M}\times(0,T]$.

\item[(2)] If $a\geq0$ then
\begin{equation*}
\frac{|\nabla u|^{2}}{u^{2}}
-\frac{u_{t}}u-a\ln u\leq\frac{n}{2t}.
\end{equation*}

\end{itemize}

\end{theorem}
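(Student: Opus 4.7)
The plan is to carry out a Li--Yau style maximum principle argument tailored to the drift Laplacian $\Delta_V$ and the $(n,m)$-Bakry--Emery tensor. First, I would pass to $f := \ln u$, which reduces the nonlinearity: the equation becomes
\begin{equation*}
\Delta_V f - f_t = a f - |\nabla f|^2.
\end{equation*}
Introducing the Harnack-type quantity $F := |\nabla f|^{2} - f_{t} - a f = \frac{|\nabla u|^{2}}{u^{2}} - \frac{u_{t}}{u} - a\ln u$, the above identity reads $\Delta_V f = -F$, a relation that later lets us replace $(\Delta_V f)^{2}$ by $F^{2}$.

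The key analytic step is the drift Bochner--Weitzenb\"ock formula combined with the refined trace inequality
\begin{equation*}
|\nabla^{2}f|^{2} + \tfrac{1}{n-m}(Vf)^{2} \geq \tfrac{1}{n}(\Delta_V f)^{2},
\end{equation*}
which is exactly what converts the $V\otimes V$ correction in ${\rm Ric}_V^{n,m}$ into a usable lower bound. This yields
\begin{equation*}
\Delta_V|\nabla f|^{2} \geq \tfrac{2}{n}(\Delta_V f)^{2} + 2\langle \nabla\Delta_V f,\nabla f\rangle + 2\,{\rm Ric}_V^{n,m}(\nabla f,\nabla f).
\end{equation*}
Using ${\rm Ric}_V^{n,m}\geq 0$, differentiating the equation in $t$ and collecting the terms (with $\Delta_V f=-F$) should give cleanly
\begin{equation*}
(\Delta_V - \partial_t)F \geq \tfrac{2}{n}F^{2} + aF - 2\langle \nabla F,\nabla f\rangle.
\end{equation*}

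With this differential inequality, I would apply the parabolic maximum principle to $\phi := tF$ on $\mathcal{M}\times[0,T]$ for arbitrary $T$. If $\phi$ attains a positive maximum at an interior point $(x_{0},t_{0})$ with $t_{0}>0$, then $\nabla \phi = 0$, $\Delta_V \phi \leq 0$, and $\phi_t \geq 0$ give $(\Delta_V-\partial_t)F\leq F/t_{0}$ while the differential inequality gives $(\Delta_V-\partial_t)F\geq \frac{2}{n}F^{2}+aF$ at this point. Combined, $F(x_{0},t_{0})\leq \frac{n}{2t_{0}}-\frac{na}{2}$, hence $\phi_{\max}\leq \frac{n}{2}-\frac{nat_{0}}{2}$. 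Splitting cases: when $a\leq 0$, monotonicity in $t_{0}$ gives $tF(x,t)\leq \tfrac{n}{2}-\tfrac{naT}{2}$; evaluating at $t=T$ and letting $T$ vary yields the first bound, while for $a\geq 0$ the same steps give $\phi_{\max}\leq n/2$ and hence $F\leq n/(2t)$.

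To handle the case $\partial\mathcal{M}\neq\emptyset$, I must exclude a positive maximum of $\phi$ on the lateral boundary. The Neumann condition $\partial u/\partial\nu=0$ translates to $\partial_{\nu}f=0$, which forces $\partial_\nu f_t=0$ as well, so $\partial_\nu F = \partial_\nu|\nabla f|^{2}$. Using the standard boundary identity $\partial_\nu|\nabla f|^{2}=-2\,\mathrm{II}(\nabla f,\nabla f)$ when $\nabla f$ is tangent to $\partial\mathcal{M}$, the convexity hypothesis ($\mathrm{II}\geq 0$) gives $\partial_\nu F\leq 0$; Hopf's lemma then rules out the boundary maximum. The main obstacle I anticipate is the bookkeeping of the time-derivative and cross terms in the derivation of $(\Delta_V-\partial_t)F$, where the logarithmic nonlinearity $af$ and the $|\nabla f|^{2}$ term from the Cole--Hopf transform must conspire to collapse into the clean $aF$ contribution; everything else is a straightforward maximum principle combined with the Bakry--Emery refined Bochner identity.
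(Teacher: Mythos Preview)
Your proposal is correct and follows essentially the same route as the paper: both pass to $f=\ln u$, apply the $V$-Bochner inequality $\frac12\Delta_V|\nabla f|^2\geq\frac1n(\Delta_V f)^2+{\rm Ric}^{n,m}_V(\nabla f,\nabla f)+\langle\nabla\Delta_V f,\nabla f\rangle$ (Lemma~2.1/\ref{l2.1}), derive the same differential inequality for the Harnack quantity, and close with the maximum principle together with the Hopf boundary argument using convexity of $\partial\mathcal{M}$. The only cosmetic difference is that the paper builds the factor $t$ into $F$ from the outset (via Lemma~\ref{l5.1} with $\alpha=1$, $q=0$, $K=0$), whereas you keep $F$ bare and multiply by $t$ afterward; your ``vary $T$'' device to recover the $t$-dependent bound in case $a\le0$ is in fact cleaner than the paper's slightly elliptical contradiction argument.
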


\begin{theorem}\label{t1.9} Let $(\mathcal{M},g)$ be a complete manifold with boundary $\partial\mathcal{M}$. Assume that $p\in\mathcal{M}$ and the geodesic ball $B(p,2R)$ does not intersect
$\partial\mathcal{M}$. We denote by $-K(2R)$ with $K(2R)\geq0$, a lower bound
of ${\rm Ric}^{n,m}_{V}$ on the ball $B(p,2R)$. Let $q$ be a function defined on $\mathcal{M}\times[0,T]$ which is $C^{2}$ in the $x$ variable and $C^{1}$ in the $t$ variable. Assume that
\begin{equation*}
\Delta_{V}q\leq\theta(2R), \ \ \ |\nabla q|\leq\gamma(2R)
\end{equation*}
on $B(p,2R)\times[0,T]$ for some constants $\theta(2R)$ and $\gamma(2R)$. If $u$ is a positive solution of the equation
\begin{equation*}
\left(\Delta_{V}-q-\partial_{t}\right)u=au\ln u
\end{equation*}
on $\mathcal{M}\times(0,T]$ for some constant $a$, then for any $\alpha>1$ and
$\epsilon\in(0,1)$, on $B(p,R)$, $u$ satisfies the following estimates:
\begin{itemize}

\item[(1)] for $a\geq0$, we have
\begin{eqnarray*}
|\nabla f|^{2}-\alpha f_{t}-\alpha q-\alpha af&\leq&\frac{n\alpha^{2}}{2(1-\epsilon)t}
+\frac{(A+\gamma)n\alpha^{2}}{2(1-\epsilon)}+\frac{n^{2}\beta^{4}C^{2}_{1}}{4\epsilon
(1-\epsilon)(\beta-1)R^{2}}\\
&&+ \ \frac{n\alpha^{2}[K+a(\alpha-1)]}{(1-\epsilon)(\alpha-1)}
+\left(\frac{[\alpha\theta+(\alpha-1)\gamma]n\alpha^{2}}{2(1-\epsilon)}
\right)^{1/2}.
\end{eqnarray*}

\item[(2)] for $a\leq0$, we have
\begin{eqnarray*}
|\nabla f|^{2}-\alpha f_{t}-\alpha q-\alpha af&\leq&\frac{n\alpha^{2}}{2(1-\epsilon)t}
+\frac{(A+\gamma)n\alpha^{2}}{2(1-\epsilon)}+\frac{n^{2}\beta^{4}C^{2}_{1}}{4\epsilon
(1-\epsilon)(\beta-1)R^{2}}\\
&&+ \ \frac{n\alpha^{2}[K-\frac{a}{2}a(\alpha-1)]}{(1-\epsilon)(\alpha-1)}
+\left(\frac{[\alpha\theta+(\alpha-1)\gamma]n\alpha^{2}}{2(1-\epsilon)}
\right)^{1/2}.
\end{eqnarray*}

\end{itemize}
Here $f:=\ln u$ and $A=[2C^{2}_{1}+(n-1)C^{2}_{1}(1+R\sqrt{K})
+C_{2}]/R^{2}$ for some positive constants $C_{1}, C_{2}$.
\end{theorem}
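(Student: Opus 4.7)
The plan is to follow the standard Li-Yau maximum-principle method, adapted to $\Delta_V$ via the $V$-Bochner-Weitzenb\"ock formula of Section \ref{section2}. Since $u>0$, I first linearize by setting $f:=\ln u$, which converts $(\Delta_V-\partial_t)u=au\ln u$ into
\[
f_t=\Delta_V f+|\nabla f|^2-q-af.
\]
I then introduce the Li-Yau quantity $F:=|\nabla f|^2-\alpha(f_t+q+af)$ and compute $(\Delta_V-\partial_t)F$ from the $V$-Bochner formula
\[
\tfrac{1}{2}\Delta_V|\nabla f|^2=|\nabla^2 f|^2+\langle\nabla\Delta_V f,\nabla f\rangle+{\rm Ric}_V(\nabla f,\nabla f),
\]
combined with the Bakry-Emery trick $|\nabla^2 f|^2+(V(f))^2/(n-m)\geq(\Delta_V f)^2/n$ and the hypothesis ${\rm Ric}_V^{n,m}\geq -(n-1)K$. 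The result is a differential inequality in which $(\Delta_V f)^2/n$ dominates; substituting $\Delta_V f=f_t-|\nabla f|^2+q+af$ rewrites this as a quadratic lower bound for $(\Delta_V-\partial_t)F$ in terms of $F$ and $|\nabla f|^2$. The case split $a\geq 0$ vs.\ $a\leq 0$ in the conclusion arises at this stage because the cross-term $a(\alpha-1)f$ has to be absorbed in opposite directions depending on the sign of $a$, producing $K+a(\alpha-1)$ in case (1) and $K-\frac{a}{2}(\alpha-1)$ in case (2).

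The second step is to localize. Choose a cutoff $\phi$ on $B(p,2R)\times[0,T]$ with $\phi\equiv 1$ on $B(p,R)$, $|\nabla\phi|^2/\phi\leq C_1/R^2$, and $\Delta\phi\geq -C_2/R^2$. The Wei-Wylie-type Laplacian comparison for $\Delta_V$ under ${\rm Ric}_V^{n,m}\geq -(n-1)K$ on $B(p,2R)$, together with the bound $|\nabla q|\leq\gamma$ used to control the extra $\langle\nabla q,\nabla\phi\rangle$-contribution, yields $\Delta_V\phi\geq -(A+\gamma)$ with $A$ as in the statement. Apply the maximum principle to $t\phi F$ on $B(p,2R)\times[0,T]$: at an interior maximum $(x_0,t_0)$ one has $\nabla(t\phi F)=0$ and $(\Delta_V-\partial_t)(t\phi F)\leq 0$; combined with the differential inequality for $F$ this produces a polynomial inequality of the form $a_2(\phi F)^2\leq a_1(\phi F)+a_0$ at $(x_0,t_0)$, with coefficients involving $1/t$, $A+\gamma$, $K$, $a(\alpha-1)$, and $\alpha\theta+(\alpha-1)\gamma$. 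Solving this quadratic (via the elementary estimate $x^2\leq bx+c\Rightarrow x\leq b+\sqrt{c}$) and restricting to $B(p,R)$ where $\phi\equiv 1$ gives the announced bound.

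The main technical obstacle is the cross-term bookkeeping: every pairing between $\nabla f$, $\nabla\phi$, $\nabla q$ and the drift $V$ must be split by Young's inequality so as to preserve the leading $(\Delta_V f)^2/n$ term with an effective coefficient at least $2(1-\epsilon)/n$. This juggling forces the two free parameters in the estimate: $\epsilon\in(0,1)$ enters in the principal Young split used to absorb the cross term $2\alpha\langle\nabla f,\nabla F\rangle$ left over from the Bochner identity, while $\beta>1$ enters from a secondary split of $|\nabla\phi|^2|\nabla f|^2/\phi$ that is responsible for the factor $\beta^4/(\beta-1)$. Finally, the $q$-contributions $\Delta_V q$ and $|\nabla q|$ enter the bound under a square root because $\alpha\theta+(\alpha-1)\gamma$ appears as an additive constant in the quadratic inequality rather than as a coefficient of $\phi F$, so solving the quadratic extracts it with a $\sqrt{\cdot}$.
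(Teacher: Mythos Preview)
Your overall strategy---Bochner formula for $\Delta_V$, the quantity $F=t(|\nabla f|^2-\alpha f_t-\alpha q-\alpha af)$, a radial cutoff $\varphi$ on $B(p,2R)$, maximum principle for $\varphi F$, and a quadratic inequality in $G=\varphi F$---is exactly the paper's approach. A few of your bookkeeping attributions are off, though, and would derail the computation if carried out as written.

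First, $\gamma$ does not enter through $\Delta_V\varphi$: there is no $\langle\nabla q,\nabla\varphi\rangle$ term anywhere. The cutoff bound is simply $\Delta_V\varphi\geq -A$ via the Bakry--Qian comparison (Corollary~\ref{c3.3}), and the $\gamma$ that ends up next to $A$ in the final estimate comes from the term $-2(\alpha-1)t\langle\nabla f,\nabla q\rangle$ in the differential inequality for $F$ (Lemma~\ref{l5.1}), after writing $2\mu^{1/2}G^{1/2}\leq 1+\mu G$. Second, the parameter $\epsilon$ is not used to absorb $\langle\nabla f,\nabla F\rangle$; that term is handled directly at the maximum point by $\varphi\nabla F=-F\nabla\varphi$, producing a term $\tfrac{2C_1 t_0}{R}\mu^{1/2}G^{3/2}$, and \emph{this} is what gets Young-split with parameter $\epsilon$ against the leading $\tfrac{2}{n\alpha^2}[1+(\alpha-1)\mu t_0]^2G^2$. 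Third, the $\beta$ in the stated bound is not an independent secondary parameter---in the paper's proof it is simply $\alpha$ (the $\beta^4/(\beta-1)$ is a typo for $\alpha^4/(\alpha-1)$), so do not introduce a second split. Finally, the sign-of-$a$ case split acts on the term $-2(\alpha-1)ta|\nabla f|^2$ (not $a(\alpha-1)f$): combined with the curvature term $-2Kt|\nabla f|^2$ it becomes $-2t[K+a(\alpha-1)]|\nabla f|^2$, and one bounds $\tfrac{[K+a(\alpha-1)]\mu t_0}{[1+(\alpha-1)\mu t_0]^2}$ and the leftover $\tfrac{-a\varphi t_0}{1+(\alpha-1)\mu t_0}$ differently according to $\mathrm{sgn}(a)$.
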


\begin{corollary}\label{c1.10} If $(\mathcal{M},g)$ is a complete noncompact
Riemannian manifold without boundary and ${\rm Ric}^{n,m}_{V}\geq-K$
on $\mathcal{M}$, then any positive solution $u$ of the equation
\begin{equation*}
\partial_{t}u=\Delta_{V}u
\end{equation*}
on $\mathcal{M}\times(0,T]$ satisfies
\begin{equation}
\frac{|\nabla u|^{2}}{u^{2}}-\alpha\frac{u_{t}}{u}
\leq\frac{n\alpha^{2}K}{\alpha-1}+\frac{n\alpha^{2}}{2t}\label{1.17}
\end{equation}
for any $\alpha>1$.
\end{corollary}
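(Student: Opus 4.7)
The plan is to obtain Corollary \ref{c1.10} as a direct specialization and limiting case of Theorem \ref{t1.9}. Observe that the weighted heat equation $\partial_t u = \Delta_V u$ is exactly $(\Delta_V - q - \partial_t)u = a u \ln u$ with $q \equiv 0$ and $a = 0$. Because $\mathcal{M}$ is noncompact and boundaryless, for any fixed $p \in \mathcal{M}$ and any $R > 0$ the geodesic ball $B(p, 2R)$ trivially avoids $\partial\mathcal{M}$; moreover, the global bound ${\rm Ric}^{n,m}_V \geq -K$ permits taking $K(2R) = K$ independent of $R$ throughout Theorem \ref{t1.9}.

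Next, I would feed these choices into Theorem \ref{t1.9}(1) (which covers $a = 0$). Since $q \equiv 0$, one may set $\theta(2R) = \gamma(2R) = 0$ for every $R$, so the final square-root term in the case (1) estimate vanishes identically. Writing $f = \ln u$, so that $|\nabla f|^2 = |\nabla u|^2/u^2$ and $f_t = u_t/u$, the Theorem \ref{t1.9}(1) bound restricted to $B(p, R)$ specializes to
\begin{equation*}
\frac{|\nabla u|^2}{u^2} - \alpha \frac{u_t}{u} \leq \frac{n\alpha^2}{2(1-\epsilon)t} + \frac{A n\alpha^2}{2(1-\epsilon)} + \frac{n^2 \beta^4 C_1^2}{4\epsilon(1-\epsilon)(\beta-1)R^2} + \frac{n\alpha^2 K}{(1-\epsilon)(\alpha-1)},
\end{equation*}
where $A = [2C_1^2 + (n-1)C_1^2(1 + R\sqrt{K}) + C_2]/R^2$.

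I would then fix an arbitrary point $x \in \mathcal{M}$ and an arbitrary $t \in (0,T]$, and let $R \to \infty$. Each summand of $A$ is either $O(1/R^2)$ or $O(1/R)$, so $A \to 0$; the explicit $C_1^2/R^2$ term also vanishes. This leaves
\begin{equation*}
\frac{|\nabla u|^2}{u^2} - \alpha \frac{u_t}{u} \leq \frac{n\alpha^2}{2(1-\epsilon)t} + \frac{n\alpha^2 K}{(1-\epsilon)(\alpha-1)}
\end{equation*}
at the point $(x,t)$, valid for every $\epsilon \in (0,1)$. Sending $\epsilon \to 0^+$ produces inequality \eqref{1.17}.

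Since the entire argument is a specialization followed by a limit inside the ready-made estimate of Theorem \ref{t1.9}, there is no substantive technical obstacle; the only points that must be checked are that completeness of $\mathcal{M}$ and the uniform Bakry-Emery lower bound legitimately permit $R \to \infty$ with $K(2R)$ constant, and that each $R$-dependent auxiliary quantity ($A$, $\theta$, $\gamma$, and the isolated $1/R^2$ term) converges to zero. Both are immediate from the stated hypotheses.
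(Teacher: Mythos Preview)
Your proposal is correct and follows exactly the approach the paper intends: Corollary \ref{c1.10} is stated in the paper (as Corollary \ref{c5.4}) immediately after Theorem \ref{t1.9} (= Theorem \ref{t5.3}) without any separate proof, so the implied argument is precisely your specialization $q\equiv0$, $a=0$, $\theta=\gamma=0$, followed by $R\to\infty$ and $\epsilon\to0^{+}$.
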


As pointed in \cite{SchoenYau94}, the estimate (\ref{1.17}) still
holds for any closed Riemannian manifold with ${\rm Ric}^{n,m}_{V}\geq-K$.

Thirdly, we derive Hamilton's Harnack inequality for $\Delta_{V}$ operator.
Setting $V\equiv0$ in Theorem \ref{t1.11}, we obtain the classical result of
Hamilton \cite{Hamilton93}. Later Kotschwar \cite{Kotschwar07} extended Hamilton's gradient estimate to complete noncompact Riemannian manifold. Li \cite{Li13} proved Hamilton's
gradient estimate for $\Delta_{V}$ where $V=-\nabla\phi$, both in
compact case
and noncompact case.

\begin{theorem}\label{t1.11} Suppose that $(\mathcal{M},g)$ is a compact Riemannian manifold with ${\rm Ric}_{V}\geq-K$ where $K\geq0$. If $u$ is a
solution of $\partial_{t}u=\Delta_{V}u$ with $0<u\leq A$ on $\mathcal{M}\times(0,T]$, then
\begin{equation}
\frac{|\nabla u|^{2}}{u^{2}}\leq
\left(\frac{2K}{e^{2Kt}-1}+2K\right)
\ln\frac{A}{u}\leq\left(\frac{1}{t}+2K\right)\ln\frac{A}{u}\label{1.18}
\end{equation}
on $\mathcal{M}\times(0,T]$.
\end{theorem}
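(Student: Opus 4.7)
The natural substitution is $f := \ln(A/u)$, which is nonnegative since $u \le A$, and which reduces the estimate (\ref{1.18}) to the pointwise bound $|\nabla f|^2 \le \phi(t)\,f$ with $\phi(t) := 2K + \frac{2K}{e^{2Kt}-1} = \frac{2Ke^{2Kt}}{e^{2Kt}-1}$. Differentiating and using $\partial_t u = \Delta_V u$ together with the identity $\Delta_V \ln u = \Delta_V u / u - |\nabla u|^2/u^2$ gives $(\partial_t - \Delta_V) f = -|\nabla f|^2$. The next input is the $V$-Bochner--Weitzenb\"ock identity from Section~\ref{section2},
$\tfrac12 \Delta_V |\nabla f|^2 = |\nabla^2 f|^2 + \langle \nabla f,\nabla \Delta_V f\rangle + {\rm Ric}_V(\nabla f,\nabla f)$,
which when combined with the equation for $f$ yields
$(\partial_t - \Delta_V)|\nabla f|^2 = -2|\nabla^2 f|^2 - 2\,{\rm Ric}_V(\nabla f,\nabla f) - 2\langle \nabla f, \nabla |\nabla f|^2\rangle$;
the hypothesis ${\rm Ric}_V \ge -K$ then bounds this above by $-2|\nabla^2 f|^2 + 2K|\nabla f|^2 - 2\langle \nabla f,\nabla|\nabla f|^2\rangle$.

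The core of the proof is a parabolic maximum-principle argument applied to $H := |\nabla f|^2 - \phi(t)\,f$. Combining the two evolution equations gives
$(\partial_t - \Delta_V) H \le -2|\nabla^2 f|^2 + (2K+\phi)|\nabla f|^2 - 2\langle \nabla f,\nabla|\nabla f|^2\rangle - \phi'(t)\,f$.
If $H$ attains a positive maximum at $(x_0, t_0)$ with $t_0 > 0$, the vanishing of $\nabla H$ yields $\nabla |\nabla f|^2 = \phi\,\nabla f$ and hence $\langle \nabla f,\nabla |\nabla f|^2\rangle = \phi |\nabla f|^2$; combining with $(\partial_t - \Delta_V) H \ge 0$ and discarding the nonpositive Hessian term produces $0 \le (2K - \phi)|\nabla f|^2 - \phi'\,f$. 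Because $H > 0$ at $(x_0,t_0)$ forces the strict inequality $|\nabla f|^2 > \phi f \ge 0$, and because $\phi > 2K$, this sharpens to $0 < (2K\phi - \phi^2 - \phi')\,f$. Selecting $\phi$ to satisfy the Bernoulli ODE $\phi' = 2K\phi - \phi^2$ now forces a contradiction. The maximal solution blowing up at $t = 0^+$ is $\phi(t) = \frac{2K e^{2Kt}}{e^{2Kt}-1}$, as a one-line differentiation confirms, and this $\phi$ exceeds $2K$ for every $t > 0$. Hence $H \le 0$ on $\mathcal{M}\times(0,T]$, giving the first inequality in (\ref{1.18}); the second is immediate from $e^{2Kt} - 1 \ge 2Kt$.

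The main obstacle is that $\phi(t) \to +\infty$ as $t \to 0^+$, so the supremum of $H$ over $\mathcal{M}\times(0,T]$ need not be attained and the maximum-principle step requires justification. I would handle this in the standard way by replacing $\phi(t)$ with $\phi(t+\varepsilon)$ and running the argument on the closed cylinder $\mathcal{M}\times[0,T]$ for each small $\varepsilon > 0$: the only extra input then is the bounded initial inequality $|\nabla f(\cdot,0)|^2 \le \phi(\varepsilon)\,f(\cdot,0)$, which is clear where $u(\cdot,0) < A$ strictly (take $\varepsilon$ small enough that $\phi(\varepsilon)$ dominates the continuous ratio $|\nabla f|^2/f$ there), and at points where $u(\cdot,0) = A$ it follows from a Taylor expansion, since both $|\nabla f|^2$ and $f$ vanish to second order and their ratio is controlled by the Hessian of $u(\cdot,0)$ at the maximum. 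Sending $\varepsilon \downarrow 0$ then recovers the estimate on all of $\mathcal{M}\times(0,T]$.
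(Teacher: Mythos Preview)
Your proof is correct, but it is packaged differently from the paper's argument, and the difference is worth noting.

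The paper (Theorem~\ref{t5.7}) works with the auxiliary function
\[
F \;:=\; \varphi(t)\,\frac{|\nabla u|^{2}}{u} \;-\; u\ln\frac{A}{u},
\]
where $\varphi$ solves the \emph{linear} ODE $\varphi'+2K\varphi-1\le 0$ with $\varphi(0)=0$. Using Lemma~\ref{l5.6} for $(\partial_t-\Delta_V)(|\nabla u|^2/u)$ and a direct computation for $(\partial_t-\Delta_V)(u\ln(A/u))=|\nabla u|^2/u$, one obtains $(\partial_t-\Delta_V)F\le 0$, and the maximum principle applies immediately since $F(\cdot,0)\le 0$ is automatic from $\varphi(0)=0$. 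The optimal choice $\varphi(t)=\frac{e^{2Kt}-1}{2Ke^{2Kt}}$ is exactly the reciprocal of your $\phi$, so your Bernoulli ODE $\phi'=2K\phi-\phi^2$ and the paper's linear ODE are the same equation in disguise; indeed $F=uH/\phi$, so $F\le 0\iff H\le 0$.

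What each approach buys: the paper's choice avoids the blow-up at $t=0$ entirely and hence needs no $\varepsilon$-regularization and no analysis of the initial ratio $|\nabla f|^2/f$. Your route is also valid, but the step ``$|\nabla f(\cdot,0)|^2\le \phi(\varepsilon)f(\cdot,0)$ for small $\varepsilon$'' deserves a sharper justification than a Taylor remark: the clean statement is the Glaeser-type inequality $|\nabla f|^2\le 2\|\nabla^2 f\|_{L^\infty}\, f$ for nonnegative $C^2$ functions on a compact manifold (proved by expanding $0\le f(x+tv)$ and forcing the discriminant to be nonpositive). With that in hand your $\varepsilon$-shift goes through, and sending $\varepsilon\downarrow 0$ recovers the estimate.
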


As a consequence of Theorem \ref{t1.11}, we generalize a result in \cite{Brighton13, Li13} about the Liouville theorem.

\begin{corollary}\label{c1.12} Suppose that $(\mathcal{M},g)$ is a compact Riemannian
manifold with ${\rm Ric}_{V}\geq-K$ where $K\geq0$. If $u$ is a positive solution
of $\Delta_{V}u=0$ on $\mathcal{M}$ then
\begin{equation}
|\nabla\ln u|^{2}
\leq2K\ln\frac{\sup_{\mathcal{M}}u}{u}\label{1.19}
\end{equation}
In particular if ${\rm Ric}_{V}\geq0$ every bounded solution $u$ satisfying $\Delta_{V}u=0$ must be constant.
\end{corollary}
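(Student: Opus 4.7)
The plan is to derive Corollary \ref{c1.12} directly from Theorem \ref{t1.11} by exploiting the fact that any function satisfying the elliptic equation $\Delta_{V}u=0$ is automatically a time-independent solution of the parabolic equation $\partial_{t}u=\Delta_{V}u$. Since $\mathcal{M}$ is compact and $u$ is smooth and positive, $A:=\sup_{\mathcal{M}}u<\infty$ and $0<u\le A$, so the hypotheses of Theorem \ref{t1.11} are in force.

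First I would substitute this stationary $u$ into inequality (\ref{1.18}). The left-hand side $|\nabla u|^{2}/u^{2}$ is independent of $t$, while the right-hand side $\bigl(2K/(e^{2Kt}-1)+2K\bigr)\ln(A/u)$ is continuous in $t$ and converges pointwise to $2K\ln(A/u)$ as $t\to\infty$. Passing to the limit therefore yields (\ref{1.19}).

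For the Liouville statement, assume ${\rm Ric}_{V}\ge0$. Given any positive solution $u$ of $\Delta_{V}u=0$, the hypothesis ${\rm Ric}_{V}\ge-\varepsilon$ holds for every $\varepsilon>0$, so the estimate (\ref{1.19}) just established applies with $K$ replaced by $\varepsilon$, giving $|\nabla\ln u|^{2}\le 2\varepsilon\ln(\sup_{\mathcal{M}}u/u)$. Letting $\varepsilon\to 0^{+}$ forces $\nabla u\equiv 0$, hence $u$ is constant. To upgrade from positive to merely bounded solutions, choose a constant $C$ large enough that $u+C>0$ everywhere on $\mathcal{M}$; since $\Delta_{V}(u+C)=\Delta_{V}u=0$ and $u+C$ is positive and bounded, the previous step applied to $u+C$ gives $\nabla u=\nabla(u+C)=0$.

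The only subtle point is legitimizing the limit $t\to\infty$ in (\ref{1.18}), but this is immediate from the fact that the bound is pointwise with the $t$-dependence isolated in a single continuous factor admitting a finite limit. No analytical obstacle arises beyond this observation, so the corollary is essentially a direct consequence of Theorem \ref{t1.11} together with the elementary trick of shifting by a constant to reduce boundedness to positivity.
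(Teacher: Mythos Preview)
Your proof is correct and follows essentially the same route as the paper: treat the elliptic solution as a stationary solution of the heat equation, apply Theorem~\ref{t1.11} with $A=\sup_{\mathcal{M}}u$, and let $t\to\infty$ to obtain (\ref{1.19}); then handle the Liouville statement by reducing bounded solutions to positive ones via an additive shift. The only cosmetic differences are that the paper sets $K=0$ directly in (\ref{1.19}) rather than using your $\varepsilon$-perturbation, and shifts by $-\inf_{\mathcal{M}}u+\epsilon$ rather than by a large constant $C$, but these are equivalent maneuvers.
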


A local version of Hamilton's estimate was proved
by Souplet and Zhang \cite{SoupletZhang06} for $\Delta$, while by Arnaudon, Thalmaier, and Wang \cite{ATW09} for the general operator $\Delta_{V}$. A probabilistic proof
of Hamilton's estimates for $\Delta$ and $\Delta_{V}$ with $V=-\nabla\phi$
can be found in \cite{AT10, Li13}. In this paper we give a geometric proof
of Hamilton's estimate for Witten's Laplacian, following the method in \cite{Kotschwar07} together with Karp-Li-Grigor'yan maximum principle for complete manifolds.

\begin{theorem}\label{t1.13} Suppose that $(\mathcal{M},g)$ is a complete noncompact Riemannian manifold with ${\rm Ric}^{n,m}_{f}\geq-K$ where $K\geq0$. If $u$ is a solution of $\partial_{t}u=
\Delta_{f}u$ with $0<u\leq A$ on $\mathcal{M}\times(0,T]$, then
\begin{equation}
\frac{|\nabla u|^{2}}{u^{2}}\leq\left(\frac{2K}{e^{2Kt}-1}+2K\right)
\ln\frac{A}{u}\leq\left(\frac{1}{t}+2K\right)\ln\frac{A}{u}\label{1.20}
\end{equation}
on $\mathcal{M}\times(0,T]$.
\end{theorem}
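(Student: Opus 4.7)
The plan is to extend Hamilton's argument from Theorem \ref{t1.11} to the noncompact setting, replacing the compact parabolic maximum principle with the Karp-Li-Grigor'yan maximum principle for complete manifolds. Set $v := \ln(A/u)$, so that $v\geq 0$ and $|\nabla u|^{2}/u^{2}=|\nabla v|^{2}$; the desired bound (\ref{1.20}) is then equivalent to the sign assertion $W\leq 0$ on $\mathcal{M}\times(0,T]$ for
\begin{equation*}
W := |\nabla v|^{2} - H(t)\,v, \qquad H(t) := \frac{2K}{1-e^{-2Kt}} = \frac{2K}{e^{2Kt}-1}+2K.
\end{equation*}
The scalar $H$ is selected because it is the unique positive solution of the Riccati ODE $H' = (2K-H)H$ with $H(0^{+})=+\infty$; this choice both produces the sharp coefficient in (\ref{1.20}) and causes a critical source term in the evolution of $W$ to vanish identically.

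First I would compute, from $u_{t}=\Delta_{f}u$, the identity $v_{t}=\Delta_{f}v-|\nabla v|^{2}$. Applying the $V$-Bochner-Weitzenb\"ock formula of Section \ref{section2} to $v$, using ${\rm Ric}_{f}\geq {\rm Ric}^{n,m}_{f}\geq -K$ (the first inequality because $\tfrac{1}{n-m}|\nabla f|^{2}\geq 0$), and discarding the favorable $-2|\nabla^{2}v|^{2}$ term, this yields
\begin{equation*}
(\partial_{t}-\Delta_{f})|\nabla v|^{2} + 2\langle\nabla v,\nabla|\nabla v|^{2}\rangle \leq 2K|\nabla v|^{2}.
\end{equation*}
Combining with $(\partial_{t}-\Delta_{f})v=-|\nabla v|^{2}$, rewriting $\nabla|\nabla v|^{2} = \nabla W + H\nabla v$, and invoking $H'=(2K-H)H$ to cancel the residual $v$-term, I expect the algebra to collapse to
\begin{equation*}
(\partial_{t}-\Delta_{f})W + 2\langle\nabla v,\nabla W\rangle \leq (2K-H)\,W.
\end{equation*}
Since $H(t)\geq 2K$ for all $t>0$, the right-hand side is nonpositive on the set $\{W\geq 0\}$, so $W$ is a subsolution of the linear parabolic operator $L := \partial_{t} - \Delta_{f} + 2\nabla v\cdot\nabla$.

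The main obstacle is converting this subsolution property into the pointwise bound $W\leq 0$ on the noncompact manifold. The initial behaviour presents no trouble: since $H(0^{+})=+\infty$, one has $\limsup_{t\to 0^{+}} W(x,t)\leq 0$ for every $x\in\mathcal{M}$ (the delicate case $v(x,0)=0$ is handled by a short Taylor expansion, using $\nabla v(x,0)=0$ at a zero minimum and $\Delta_{f}v(x,0)\geq 0$). The real work is ruling out a positive supremum attained at spatial infinity; here I would apply the Karp-Li-Grigor'yan maximum principle to $L$, which guarantees $W\leq 0$ provided a weighted $L^{2}$-growth bound
\begin{equation*}
\int_{0}^{T}\!\!\int_{\mathcal{M}} W_{+}^{2}\,e^{-c\,\rho(x)^{2}}\,dV\,dt < \infty
\end{equation*}
holds for some $c>0$, with $\rho$ the distance to a fixed basepoint. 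Verifying this integrability from $0<u\leq A$ is the delicate step; if it cannot be read off directly, I would localize by multiplying against a cutoff $\eta(\rho/R)$ supported in $B(p,2R)$, using the weighted Laplacian comparison theorem coming from the lower bound on ${\rm Ric}^{n,m}_{f}$ to absorb the $O(R^{-1})$ error terms arising from $\nabla\eta$ and $\Delta_{f}\eta$, and then send $R\to\infty$. The second, weaker inequality in (\ref{1.20}) finally follows from $H(t)\leq 1/t+2K$, a consequence of $e^{2Kt}-1\geq 2Kt$.
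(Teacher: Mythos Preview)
Your overall strategy---derive a differential inequality for an auxiliary quantity and then invoke the Karp--Li--Grigor'yan maximum principle---matches the paper's. However, your choice of test function $W=|\nabla v|^{2}-H(t)v$ creates a concrete obstruction that the paper's argument is specifically designed to avoid. The Karp--Li--Grigor'yan principle (Theorem~\ref{t5.9}) applies to subsolutions of $\partial_{t}-\Delta_{f}$, with the weighted measure $e^{f}dV$ fixed; your inequality carries an additional first-order drift $2\langle\nabla v,\nabla W\rangle$. Absorbing this drift would replace $\Delta_{f}$ by $\Delta_{f-2v}$, but $v$ is time-dependent and, more seriously, you have no curvature lower bound for ${\rm Ric}^{n,m}_{f-2v}$, so neither the Laplacian comparison nor the weighted volume comparison that underlie the growth verification are available for the modified operator. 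The paper fixes this by working instead with
\[
F \;=\; \varphi(t)\,\frac{|\nabla u|^{2}}{u}\;-\;u\ln\frac{A}{u}\;=\;\frac{u}{H(t)}\,W,
\]
for which Lemma~\ref{l5.6} and the computation in Theorem~\ref{t5.7} give the clean inequality $(\partial_{t}-\Delta_{f})F\le 0$ with no drift and no zeroth-order term; the extra factor of $u$ is exactly what cancels the $2\langle\nabla v,\nabla\cdot\rangle$ contribution.

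The second gap is the growth hypothesis itself. You correctly flag the integrability of $W_{+}^{2}$ as ``the delicate step,'' but neither $0<u\le A$ nor a cutoff argument alone supplies a bound on $|\nabla u|$ at spatial infinity. The paper closes this by (i) an $\epsilon$-regularization $u_{\epsilon}=u+\epsilon$, which gives the crude pointwise control $(F_{\epsilon})_{+}\le(\varphi/\epsilon)|\nabla u|^{2}$, and then (ii) an independent Bernstein-type estimate (Proposition~\ref{p5.11}) for the quantity $G=(aA^{2}+u^{2})|\nabla u|^{2}$, proved by a separate cutoff-and-maximum-principle argument using the Bakry--Qian comparison. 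This yields $\varphi|\nabla u|^{2}\le C(n,K,r)A^{2}$ on $B(x_{0},r)$, after which the Bishop--Gromov volume comparison for $e^{f}dV$ (available precisely because ${\rm Ric}^{n,m}_{f}\ge -K$) shows the Karp--Li--Grigor'yan growth condition $\int^{\infty}r/\alpha(r)\,dr=\infty$ is met. Your sketch omits any analogue of Proposition~\ref{p5.11}, and without it the maximum-principle step cannot be completed.
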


We compare other Hamilton's estimates with (\ref{1.20}). In our
geometric proof we require the curvature condition ${\rm Ric}^{n,m}_{f}\geq-K$
in order to use the Bakry-Qian's Laplacian comparison theorem without
any additional requirement on the potential function $f$. If we use the curvature condition ${\rm Ric}_{f}\geq-K$ in our geometric proof, then some conditions on $f$ would be
required (see \cite{ChenJostQiu12, WeiWylie09}). A probabilistic proof
of Li \cite{Li13} shows a similar estimate
\begin{equation*}
\frac{|\nabla u|^{2}}{u^{2}}\leq\left(\frac{2}{t}+2K\right)\ln\frac{A}{u}
\end{equation*}
where $0<u\leq A$ on $\mathcal{M}\times(0,T]$ and ${\rm Ric}_{f}\geq-K$.

In the last part, we generalize Hessian estimates for positive solutions of the heat
equation in \cite{HanZhang12} to
these of the weighted heat equation.

\begin{theorem}\label{t1.14} Let $(\mathcal{M},g)$ be a closed $m$-dimensional
Riemannian manifold with ${\rm Ric}^{n,m}_{V}\geq-K$ where $K\geq0$.
\begin{itemize}

\item[(a)] If $u$ is a solution of $\partial_{t}u=\Delta_{V}u$ in $\mathcal{M}
\times(0,T]$ and $0<u\leq A$, then
\begin{equation}
\nabla^{2}u\leq\left(B+\frac{5}{t}\right)u\left(1+\ln\frac{A}{u}\right)g
\label{1.21}
\end{equation}
in $\mathcal{M}\times(0,T]$, where $B=10m^{3/2}n\mathcal{K}_{V}$,
\begin{equation*}
\mathcal{K}_{V}:=K_{1}+K_{2}+\sqrt{(K_{1}+K_{2})K+K_{2}
+K_{1}\sup_{\mathcal{M}}|V|^{2}}
\end{equation*}
with $K_{1}=\max_{\mathcal{M}}(|{\rm Rm}|+|{\rm Ric}_{V}|)$ and $K_{2}
=\max_{\mathcal{M}}|\nabla{\rm Ric}_{V}|$.

\item[(b)] If $u$ is a solution of $\partial_{t}u=\Delta_{V}u$ in $Q_{R,T}(x_{0},t_{0})$ and $0<u\leq A$, then
    \begin{equation}
    \nabla^{2}u\leq C_{1}\left(\frac{1}{T}+\frac{1+R\sqrt{K}}{R^{2}}+B\right)u
    \left(1+\ln\frac{A}{u}\right)^{2}g\label{1.22}
    \end{equation}
    in $Q_{R/2,T/2}(x_{0},t_{0})$, where $B=C_{2}m^{5/2}n^{2}
    \mathcal{K}_{V}$ and $C_{1}, C_{2}$ are positive universal constants.

\end{itemize}

\end{theorem}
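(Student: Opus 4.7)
The plan is to extend the Han--Zhang Hessian estimate for the heat equation to the weighted setting by systematically replacing the ordinary Bochner formula with the $V$-Bochner--Weitzenb\"ock identity indicated in Section \ref{section2}, and by expressing all curvature-like contributions in terms of ${\rm Ric}_{V}^{n,m}$ together with the pointwise quantities gathered in $\mathcal{K}_{V}$. As a preparatory step I would invoke the Hamilton-type gradient bound of Theorem \ref{t1.11} (and, for part (b), a local Souplet--Zhang-style analogue in the spirit of \cite{ATW09}), which gives $|\nabla u|^{2}/u^{2}\le (1/t+2K)\ln(A/u)$; this is indispensable for absorbing the first-order cross terms that appear when one differentiates the equation twice.

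Set $L=1+\ln(A/u)\ge 1$, so the target bound (\ref{1.21}) becomes $\nabla^{2}u\le(B+5/t)\,uL\,g$. For each unit vector $\xi$ define
\begin{equation*}
h(x,t;\xi) \;=\; \frac{\xi^{i}\xi^{j}u_{ij}}{uL}.
\end{equation*}
Using the standard commutator $\nabla_{i}\nabla_{j}\Delta u-\Delta\nabla_{i}\nabla_{j}u$, the extra contributions $\nabla_{i}\nabla_{j}\langle V,\nabla u\rangle-\langle V,\nabla\nabla_{i}\nabla_{j}u\rangle$, and the $V$-Bochner formula, compute $(\Delta_{V}-\partial_{t})h$. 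The quotient $u_{ij}/(uL)$ is calibrated so that $\partial_{t}(1/(uL))$ produces $u_{t}/(u^{2}L^{2})=\Delta_{V}u/(u^{2}L^{2})$, cancelling the leading bad term of the Bochner identity; what remains is an inequality of the schematic form $(\Delta_{V}-\partial_{t})h\ge (2/L)|W|^{2}-C\mathcal{K}_{V}-(\text{absorbable first-order})$ with $W=\nabla^{2}u/(uL)$. A maximum principle applied at a point where $\sup_{|\xi|=1}h$ is attained, using $|W|^{2}\ge h^{2}$ to provide quadratic feedback, yields (\ref{1.21}) with the claimed $B=10m^{3/2}n\mathcal{K}_{V}$ and $5/t$ term.

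For the local version (\ref{1.22}) I would multiply the maximized quantity by a smooth space-time cutoff $\eta$ supported in $Q_{R,T}(x_{0},t_{0})$ with $|\nabla\eta|^{2}/\eta+|\Delta_{V}\eta|+|\partial_{t}\eta|$ bounded by $C(R^{-2}+T^{-1}+\sqrt{K}\,R^{-1})$; this is permitted by the Bakry--Qian Laplacian comparison under ${\rm Ric}_{V}^{n,m}\ge -K$. The extra cutoff-error terms force one to absorb a further power of $|\nabla u|^{2}/u^{2}\le O(L)$ via the Hamilton estimate, which is exactly where the additional factor of $L$ appears, yielding $(1+\ln(A/u))^{2}$ in (\ref{1.22}) rather than the linear factor in (\ref{1.21}). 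The \textbf{main obstacle} is the algebraic bookkeeping in $[\nabla^{2},\Delta_{V}]$: since $V$ is not assumed to be a gradient, repeated differentiation produces terms in $\nabla V,\nabla^{2}V$ and $\mathscr{L}_{V}g$ that must be repackaged via ${\rm Ric}_{V}={\rm Ric}-\tfrac{1}{2}\mathscr{L}_{V}g$ and the second Bianchi identity, so that only $\nabla{\rm Ric}_{V}$, ${\rm Rm}$ and $|V|$ appear, producing a constant controlled by $\mathcal{K}_{V}$ alone. A secondary delicate point is arranging the maximization over $\xi$ so that the output is a genuine matrix inequality rather than a trace- or norm-type bound; this requires selecting the eigendirection of the maximizing eigenvalue carefully at the max point and verifying that the $W$-quadratic term dominates the appropriate component.
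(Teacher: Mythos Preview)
Your outline misses the central structural device of the Han--Zhang argument that the paper carries over verbatim: the Hessian quantity $v_{ij}=\nabla_{i}\nabla_{j}u/(u(1-f))$ \emph{by itself} does not generate a useful quadratic term. If you compute $(\partial_{t}-\Delta_{V})\nabla_{i}\nabla_{j}u$ (equation (\ref{6.12}) in the paper) you get only curvature-times-Hessian and $\nabla{\rm Ric}_{V}$-times-gradient terms; after normalizing by $uL$ the extra contributions from $\Delta_{V}(1/(uL))$ are gradient-quadratic, not Hessian-quadratic. So the schematic inequality you write, $(\Delta_{V}-\partial_{t})h\ge(2/L)|W|^{2}-\cdots$ with $W=\nabla^{2}u/(uL)$, simply does not hold: there is no $|\nabla^{2}u|^{2}$ on the right-hand side to absorb $h^{2}$. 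The paper (following Han--Zhang) fixes this by introducing a second matrix $w_{ij}=\nabla_{i}u\nabla_{j}u/(u^{2}(1-f)^{2})$ and working with the linear combination $\alpha\boldsymbol{V}+\boldsymbol{W}$. The evolution of $\boldsymbol{W}$ (Lemma \ref{l6.3}) produces the crucial $-2(\boldsymbol{V}+f\boldsymbol{W})^{2}$ term, and it is \emph{this} that yields $-2\lambda^{2}/\alpha^{2}$ for the top eigenvalue $\lambda$ of $\alpha\boldsymbol{V}+\boldsymbol{W}$, i.e.\ the quadratic feedback you need.

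A second ingredient you do not mention is the Li--Yau estimate (Corollary \ref{c5.4}). In bounding the negative eigenvalues of $\alpha\boldsymbol{V}+\boldsymbol{W}$ one is forced to control $-\alpha\Delta u/(u(1-f))$, and this is done via the Li--Yau differential Harnack $|\nabla u|^{2}/u^{2}-\alpha u_{t}/u\le n\alpha^{2}K/(\alpha-1)+n\alpha^{2}/(2t)$, which is where the $|V|^{2}$ contribution to $\mathcal{K}_{V}$ arises (since $\Delta u=\Delta_{V}u-\langle V,\nabla u\rangle$). Hamilton's gradient estimate, which you do invoke, is used separately to bound $|\boldsymbol{W}|\le\tfrac14(1/t+2K)$. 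For part (b) the same two-matrix combination is localized by $\psi(\alpha\boldsymbol{V}+\boldsymbol{W})+\beta f\boldsymbol{g}$ with $\beta$ tied to $|\nabla\psi|^{2}/\psi$; the extra $(1-f)$ factor that emerges from the $\beta f$ shift is the origin of the squared logarithm in (\ref{1.22}). Your remark about repackaging $\nabla V,\nabla^{2}V$ in terms of $\nabla{\rm Ric}_{V}$ and the antisymmetric part $\mathscr{A}_{V}g$ is correct and is exactly what the paper does, but that bookkeeping is secondary to getting the right coupled matrix system in the first place.
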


\section{$V$-Bochner-Weitzenb\"ock formula and its applications}\label{section2}

To prove Li-Yau-Hamilton estimates for $V$-weighted equation, we need the following Bochner-Weitzenb\"ock formula for $V$-Laplace operator.

\begin{lemma}\label{l2.1} Given a smooth vector field $V$ on a Riemannian 
manifold $(\mathcal{M},g)$. For any smooth function $u$ on $\mathcal{M}$, we have
\begin{equation}
\frac{1}{2}\Delta_{V}|\nabla u|^{2}
=|\nabla^{2}u|^{2}+{\rm Ric}_{V}(\nabla u,\nabla u)
+\langle\nabla\Delta_{V}u,\nabla u\rangle.\label{2.1}
\end{equation}
In particular, we have
\begin{eqnarray}
\frac{1}{2}\Delta_{V}|\nabla u|^{2}
&\geq&\frac{1}{n}(\Delta_{V}u)^{2}
+{\rm Ric}^{n,m}_{V}(\nabla u,\nabla u)+\left\langle\nabla\Delta_{V}u,\nabla u
\right\rangle\label{2.2},\\
\frac{1}{2}\Delta_{V}|\nabla u|^{2}&\geq&
|\nabla^{2}u|^{2}+{\rm Ric}^{n,m}_{V}(\nabla u,\nabla u)+\langle
\nabla\Delta_{V}u,\nabla u\rangle,\label{2.3}\\
\frac{1}{2}\Delta_{V}|\nabla u|^{2}
&=&|\nabla^{2}u|^{2}+{\rm Ric}^{n,m}_{V}(\nabla u,\nabla u)
+\langle\nabla\Delta_{V} u,\nabla u\rangle
+\frac{\langle V,\nabla u\rangle^{2}}{n-m}.\label{2.4}
\end{eqnarray}
for any $n>m$.
\end{lemma}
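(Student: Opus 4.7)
The plan is to derive (2.1) by combining the classical Bochner formula with the contribution from the drift $\langle V,\nabla\cdot\rangle$, and then to read off (2.2)--(2.4) as algebraic consequences.

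First I would write $\Delta_V|\nabla u|^2 = \Delta|\nabla u|^2 + \langle V,\nabla|\nabla u|^2\rangle$ and apply the classical Bochner identity
\[
\tfrac12\Delta|\nabla u|^2 = |\nabla^2 u|^2 + \mathrm{Ric}(\nabla u,\nabla u) + \langle \nabla\Delta u,\nabla u\rangle.
\]
The drift produces $\tfrac12\langle V,\nabla|\nabla u|^2\rangle = V^j u^k \nabla_j\nabla_k u$ (using symmetry of the Hessian). On the other hand, differentiating $\Delta_V u = \Delta u + V^j\nabla_j u$ yields
\[
\langle \nabla\Delta_V u,\nabla u\rangle = \langle\nabla\Delta u,\nabla u\rangle + u^k u^j \nabla_k V_j + V^j u^k \nabla_k\nabla_j u.
\]
Subtracting to isolate the terms that do not involve second derivatives of $u$ gives
\[
\tfrac12\Delta_V|\nabla u|^2 - \langle\nabla\Delta_V u,\nabla u\rangle - |\nabla^2 u|^2 = \mathrm{Ric}(\nabla u,\nabla u) - u^j u^k \nabla_k V_j.
\]
The identification $u^j u^k \nabla_k V_j = \tfrac12(\mathscr{L}_V g)(\nabla u,\nabla u)$, which uses the symmetrization $\nabla_j V_k+\nabla_k V_j = (\mathscr{L}_V g)_{jk}$ together with the symmetry of $u^j u^k$, is the one bookkeeping step that requires care; it is the main obstacle, in the sense that a sign or factor slip here would propagate through the rest. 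Recognizing this reduces the right-hand side to $\mathrm{Ric}_V(\nabla u,\nabla u)$ by the very definition (1.2), proving (2.1).

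For (2.4) I would simply substitute $\mathrm{Ric}_V = \mathrm{Ric}^{n,m}_V + \tfrac{1}{n-m}V\otimes V$ into (2.1); the extra term $\tfrac{1}{n-m}\langle V,\nabla u\rangle^2$ appears exactly as stated. For (2.3) I would note that $\mathrm{Ric}_V(\nabla u,\nabla u)\geq \mathrm{Ric}^{n,m}_V(\nabla u,\nabla u)$ pointwise, since the difference $\tfrac{1}{n-m}\langle V,\nabla u\rangle^2$ is nonnegative when $n>m$, so (2.3) follows directly from (2.1).

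Finally, (2.2) is the refined inequality where $|\nabla^2 u|^2$ is replaced by $\tfrac{1}{n}(\Delta_V u)^2$. Starting from (2.4), I would apply the standard bound $|\nabla^2 u|^2 \geq \tfrac{1}{m}(\Delta u)^2$ together with the elementary Cauchy--Schwarz inequality
\[
\frac{(a+b)^2}{n} \leq \frac{a^2}{m} + \frac{b^2}{n-m}, \qquad n>m,
\]
applied with $a=\Delta u$ and $b=\langle V,\nabla u\rangle$, so that $a+b=\Delta_V u$. This yields $|\nabla^2 u|^2 \geq \tfrac{1}{n}(\Delta_V u)^2 - \tfrac{1}{n-m}\langle V,\nabla u\rangle^2$; plugging into (2.4) the $\tfrac{1}{n-m}\langle V,\nabla u\rangle^2$ terms cancel and (2.2) drops out.
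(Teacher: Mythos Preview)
Your proposal is correct and follows essentially the same approach as the paper: both start from the classical Bochner formula, add the drift term, and identify the symmetrized $\nabla_k V_j$ contribution as $\tfrac12(\mathscr{L}_V g)(\nabla u,\nabla u)$ to obtain (2.1); (2.3) and (2.4) then follow immediately from the definition of ${\rm Ric}^{n,m}_V$. For (2.2) the paper phrases the key step as the inequality $(a-b)^2\geq \tfrac{1}{t}a^2-\tfrac{1}{t-1}b^2$ with $t=n/m$, $a=\Delta_V u$, $b=\langle V,\nabla u\rangle$, which is algebraically equivalent to your Cauchy--Schwarz formulation $\tfrac{(a+b)^2}{n}\leq\tfrac{a^2}{m}+\tfrac{b^2}{n-m}$ with $a=\Delta u$, $b=\langle V,\nabla u\rangle$.
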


\begin{proof} When $V=\nabla f$ for some smoot function $f$,
this inequality was established by many authors (e.g., \cite{Li05}). The
proof is bases on the usual Bochner-Weitzenb\"ock formula
\begin{equation}
\frac{1}{2}\Delta|\nabla u|^{2}=|\nabla^{2}u|^{2}
+{\rm Ric}(\nabla u,\nabla u)
+\langle\nabla\Delta u,\nabla u\rangle.\label{2.5}
\end{equation}
By definition, it follows that
\begin{eqnarray*}
\frac{1}{2}\Delta_{V}|\nabla u|^{2}&=&\frac{1}{2}\Delta|\nabla u|^{2}
+\frac{1}{2}\langle V,\nabla|\nabla u|^{2}\rangle\\
&=&|\nabla^{2}u|^{2}
+{\rm Ric}(\nabla u,\nabla u)
+\langle\nabla\Delta u,\nabla u\rangle+\frac{1}{2}\langle V,
\nabla|\nabla u|^{2}\rangle.
\end{eqnarray*}
The last two terms of the right-hand side becomes
\begin{eqnarray*}
\langle\nabla\Delta u,\nabla u\rangle
+\frac{1}{2}\langle V,\nabla|\nabla u|^{2}\rangle&=&
\left\langle\nabla(\Delta_{V}u-\langle V,\nabla u\rangle),\nabla u\right\rangle
+V^{i}\nabla^{i}u\nabla_{i}\nabla_{j}u\\
&=&\langle\nabla\Delta_{V}u,\nabla u\rangle
-\nabla^{i}u\nabla_{i}(V^{j}\nabla_{j}u)
+V^{i}\nabla^{j}u\nabla_{i}\nabla_{j}u\\
&=&\langle\nabla\Delta_{V}u,\nabla u\rangle
-\nabla_{i}u\nabla_{j}u\nabla^{i}V^{j}\\
&=&\langle\nabla\Delta_{V}u,\nabla u\rangle
-\nabla_{i}u\nabla_{j}u
\left(\frac{\nabla^{i}V^{j}+\nabla^{j}V^{i}}{2}\right)\\
&=&\langle\nabla\Delta_{V}u,\nabla u\rangle
-\frac{1}{2}\mathscr{L}_{V}g(\nabla u,\nabla u).
\end{eqnarray*}
Therefore
\begin{equation*}
\frac{1}{2}\Delta_{V}|\nabla u|^{2}
=|\nabla^{2}u|^{2}
+{\rm Ric}_{V}(\nabla u,\nabla u)
+\langle\nabla\Delta_{V}u,\nabla u\rangle.
\end{equation*}
This is the identity (\ref{2.1}), which implies (\ref{2.4}) and (\ref{2.3}). From the elementary inequality $m|\nabla^{2}u|^{2}\geq|\Delta u|^{2}$ we arrive at
\begin{equation*}
\frac{1}{2}\Delta_{V}|\nabla u|^{2}\geq\frac{1}{m}|\Delta u|^{2}
+{\rm Ric}^{n,m}_{V}(\nabla u,\nabla u)
+\langle\nabla\Delta_{V}u,\nabla u\rangle
+\frac{1}{n-m}\langle V,\nabla u\rangle^{2}
\end{equation*}
for any $n>m$. Using another elementary inequality
\begin{equation*}
(a-b)^{2}\geq\frac{1}{t}a^{2}-\frac{1}{t-1}b^{2}, \ \ \  t>1,
\end{equation*}
we get
\begin{eqnarray*}
\frac{1}{m}|\Delta u|^{2}&=&\frac{1}{m}\left(\Delta_{V}u-
\langle V,\nabla u\rangle\right)^{2}\\
&\geq&\frac{1}{m}
\left(\frac{1}{n/m}(\Delta_{V}u)^{2}
-\frac{1}{n/m-1}\langle V,\nabla u\rangle^{2}\right)\\
&=&\frac{1}{n}(\Delta_{V}u)^{2}-\frac{1}{n-m}\langle V,\nabla u\rangle^{2}
\end{eqnarray*}
Together those inequalities, we obtain the desired inequality (\ref{2.2}).
\end{proof}

\begin{corollary}\label{c2.2} Let $u$ be a solution of $\Delta_{V}u=0$ and $n>m$ a constant. Then
\begin{equation}
|\nabla u|\Delta_{V}|\nabla u|\geq\frac{1}{n-1}\left|\nabla(|\nabla u|)
\right|^{2}+{\rm Ric}^{n,m}_{V}(\nabla u,\nabla u).\label{2.6}
\end{equation}
\end{corollary}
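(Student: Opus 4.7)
The plan is to combine the sharper \emph{equality} version (\ref{2.4}) of the $V$-Bochner formula with a refined Kato-type inequality, in the same spirit as the classical Cheng--Yau derivation of $|\nabla u|\Delta|\nabla u|\ge \frac{1}{n-1}|\nabla|\nabla u||^{2}+{\rm Ric}(\nabla u,\nabla u)$ for harmonic $u$, but with the effective dimension~$n$ and the extra drift term in~(\ref{2.4}) conspiring to give exactly the same improvement over the naive estimate coming from~(\ref{2.3}).

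First I would set $\Delta_{V}u=0$ in (\ref{2.4}) and observe that, since $\Delta u=\Delta_{V}u-\langle V,\nabla u\rangle=-\langle V,\nabla u\rangle$, one has $\langle V,\nabla u\rangle^{2}=(\Delta u)^{2}$, so
\begin{equation*}
\tfrac{1}{2}\Delta_{V}|\nabla u|^{2}=|\nabla^{2}u|^{2}+{\rm Ric}^{n,m}_{V}(\nabla u,\nabla u)+\tfrac{1}{n-m}(\Delta u)^{2}.
\end{equation*}
Combining this with the pointwise calculus identity $\tfrac{1}{2}\Delta_{V}|\nabla u|^{2}=|\nabla u|\Delta_{V}|\nabla u|+\bigl|\nabla(|\nabla u|)\bigr|^{2}$ (valid wherever $|\nabla u|>0$), the claim reduces to the refined Kato inequality
\begin{equation*}
|\nabla^{2}u|^{2}+\tfrac{1}{n-m}(\Delta u)^{2}\ \ge\ \tfrac{n}{n-1}\bigl|\nabla(|\nabla u|)\bigr|^{2}.
\end{equation*}

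The main obstacle is establishing this last inequality, and I would handle it by working in a local orthonormal frame with $e_{1}=\nabla u/|\nabla u|$ at the point in question, so that $\bigl|\nabla(|\nabla u|)\bigr|^{2}=u_{11}^{2}+\sum_{j>1}u_{1j}^{2}$ while $|\nabla^{2}u|^{2}\ge u_{11}^{2}+\sum_{i>1}u_{ii}^{2}+2\sum_{j>1}u_{1j}^{2}$. Applying Cauchy--Schwarz to the trailing diagonal block, $\sum_{i>1}u_{ii}^{2}\ge(\Delta u-u_{11})^{2}/(m-1)$, and writing $a=u_{11}$, $b=\Delta u-u_{11}$, the required inequality collapses (after dropping the manifestly nonnegative term $\tfrac{n-2}{n-1}\sum_{j>1}u_{1j}^{2}$) to the purely algebraic statement
\begin{equation*}
-\tfrac{a^{2}}{n-1}+\tfrac{b^{2}}{m-1}+\tfrac{(a+b)^{2}}{n-m}\ \ge\ 0,
\end{equation*}
which I expect to verify by a direct completion of squares; regrouping shows the left side equals $\tfrac{1}{n-m}\bigl(a\sqrt{(m-1)/(n-1)}+b\sqrt{(n-1)/(m-1)}\bigr)^{2}$, hence nonnegative. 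Reassembling these pieces yields (\ref{2.6}) wherever $|\nabla u|>0$; at points where $|\nabla u|$ vanishes both sides are zero in the appropriate limit, so the inequality extends to all of $\mathcal{M}$.
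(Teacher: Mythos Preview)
Your proof is correct and follows essentially the same route as the paper: both arguments pass to a frame with $e_{1}=\nabla u/|\nabla u|$, use Cauchy--Schwarz on the $(m-1)\times(m-1)$ diagonal block, and balance the resulting quadratic in $u_{11}$ against the drift contribution to produce the factor $\tfrac{1}{n-1}$ and the curvature ${\rm Ric}^{n,m}_{V}$. The only cosmetic difference is that the paper starts from (\ref{2.1}) and keeps the term $V_{1}u_{1}$ explicit, applying Young's inequality with a parameter $\alpha$ and optimizing at $\alpha=\frac{n-m}{m-1}$, whereas you start from (\ref{2.4}), substitute $\langle V,\nabla u\rangle=-\Delta u$, and complete the square directly---your identity $\tfrac{1}{n-m}\bigl(a\sqrt{(m-1)/(n-1)}+b\sqrt{(n-1)/(m-1)}\bigr)^{2}$ is exactly the equality case of the paper's optimized Young inequality.
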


\begin{proof} From the identity
\begin{equation*}
\Delta_{V}|\nabla u|^{2}=2|\nabla u|\Delta_{V}|\nabla u|
+2\left|\nabla(|\nabla u|)\right|^{2}
\end{equation*}
and the above lemma, we obtain
\begin{equation}
|\nabla u|\Delta_{V}|\nabla u|
=|\nabla^{2}u|^{2}
-\left|\nabla(|\nabla u|)\right|^{2}
+{\rm Ric}_{V}(\nabla u,\nabla u)\label{2.7}
\end{equation}
for any solution $u$ of $\Delta_{V}u=0$. Now the proof follows from the
similar argument as stated in \cite{SchoenYau94, Yau75, Li05}. For the
completeness, we present it here. Given any point $p\in\mathcal{M}$ and choose
a normal coordinate system $(x^{1},\cdots,x^{m})$ at $p$ so that $u_{i}(p)
=|\nabla u|(p)$ and $u_{i}(p)=0$ for all $2\leq i\leq m$, where $u_{i}:=
\partial u/\partial x^{i}$, etc. Then
\begin{equation*}
\left|\nabla(|\nabla u|)\right|^{2}=\sum_{1\leq j\leq m}u^{2}_{1j}.
\end{equation*}
Since $0=\Delta u+\langle V,u\rangle$ it follows that
\begin{equation*}
-\sum_{2\leq i\leq m}u_{ii}=u_{11}+V_{1}u_{1}
\end{equation*}
and then, for any $\alpha>0$, (see page 1310--1311 in \cite{Li05} for some detail)
\begin{eqnarray*}
|\nabla^{2}u|^{2}-\left|\nabla(|\nabla u|)\right|^{2}
&\geq&\sum_{2\leq i\leq  m}u^{2}_{i1}
+\frac{1}{m-1}(u_{11}+V_{1}u_{1})^{2}\\
&\geq&\left(\sum_{2\leq i\leq m}u^{2}_{i1}
+\frac{1}{(1+\alpha)(m-1)}u^{2}_{11}\right)
-\frac{1}{\alpha(m-1)}|V_{1}u_{1}|^{2}\\
&\geq&\frac{1}{(1+\alpha)(m-1)}
\left|\nabla(|\nabla u|)\right|^{2}
-\frac{1}{\alpha(m-1)}|\langle V,\nabla u\rangle|^{2}.
\end{eqnarray*}
Consequently,
\begin{equation*}
|\nabla u|\Delta_{V}|\nabla u|
\geq\frac{1}{(1+\alpha)(m-1)}\left|\nabla(|\nabla u|)\right|^{2}
+\left({\rm Ric}_{V}
-\frac{1}{\alpha(m-1)}
V\otimes V\right)(\nabla u,\nabla u).
\end{equation*}
Taking $\alpha=\frac{n-m}{m-1}$ yields the desired result.
\end{proof}

\begin{theorem}\label{t2.3} Let $(\mathcal{M},g)$ be a compact $m$-dimensional 
Riemannian manifold with ${\rm Ric}^{n,m}_{V}\geq-K$, where $K\geq0$ is a 
constant. If $u$ is a solution of $\Delta_{V}u=0$ which is bounded from below, then
\begin{equation}
|\nabla u|\leq \sqrt{(n-1)K}\left(u-\inf_{\mathcal{M}}u\right).\label{2.8}
\end{equation}
In particular, if ${\rm Ric}^{n,m}_{V}\geq0$, then every positive solution of
$\Delta_{V}u=0$ must be constant.
\end{theorem}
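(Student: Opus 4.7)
The plan is to use the classical Cheng--Yau maximum-principle argument, feeding in Corollary~\ref{c2.2} (rather than the generic inequality (\ref{2.2})) to obtain the sharp constant $n-1$. Since $\mathcal{M}$ is compact, $u$ attains its infimum. First I replace $u$ by the shifted function $w := u - \inf_{\mathcal{M}} u + \epsilon$ with $\epsilon > 0$; because $\Delta_V$ annihilates constants, $\Delta_V w = 0$, and $w > 0$. The estimate (\ref{2.8}) then reduces to showing
$$\phi := \frac{|\nabla w|^2}{w^2} \leq (n-1)K \quad \text{on } \mathcal{M},$$
after which the desired bound follows by letting $\epsilon \downarrow 0$.

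Since $\phi$ is smooth and $\mathcal{M}$ is compact, $\phi$ attains its maximum at some $p_0 \in \mathcal{M}$. If $\nabla w(p_0) = 0$ then $\phi \equiv 0$ and $u$ is constant, so I may assume $\nabla w(p_0) \neq 0$, which keeps $|\nabla w|$ smooth near $p_0$ and gives $\nabla\phi(p_0) = 0$ and $\Delta_V\phi(p_0) \leq 0$. The critical-point condition translates to $\nabla|\nabla w|^2 = 2w^{-1}|\nabla w|^2\nabla w$ at $p_0$, from which
$$|\nabla|\nabla w||^2 = \frac{|\nabla w|^4}{w^2}, \qquad \langle \nabla|\nabla w|^2, \nabla w\rangle = \frac{2|\nabla w|^4}{w}$$
follow by direct computation.

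The endgame is straightforward plug-and-chug. Expanding $\Delta_V\phi = w^{-2}\Delta_V|\nabla w|^2 + 2\langle\nabla|\nabla w|^2, \nabla w^{-2}\rangle + |\nabla w|^2\,\Delta_V w^{-2}$, and using $\Delta_V w = 0$ (which gives $\Delta_V w^{-2} = 6w^{-4}|\nabla w|^2$) together with the critical-point identities above, yields
$$\Delta_V\phi(p_0) = w^{-2}\Delta_V|\nabla w|^2 - 2w^{-4}|\nabla w|^4.$$
Combining Corollary~\ref{c2.2} applied to $w$ with the elementary identity $\Delta_V|\nabla w|^2 = 2|\nabla w|\Delta_V|\nabla w| + 2|\nabla|\nabla w||^2$ gives $\Delta_V|\nabla w|^2 \geq \tfrac{2n}{n-1}|\nabla|\nabla w||^2 - 2K|\nabla w|^2$. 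Substituting the critical-point value of $|\nabla|\nabla w||^2$ and using $\Delta_V\phi(p_0) \leq 0$ collapses everything to $\tfrac{2}{n-1}w^{-4}|\nabla w|^4 \leq 2Kw^{-2}|\nabla w|^2$, i.e.\ $\phi(p_0) \leq (n-1)K$.

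The main obstacle is essentially algebraic: it is tempting to work with $h := \log w$ and apply (\ref{2.2}) to $|\nabla h|^2$, but that route produces only the weaker bound $|\nabla h|^2 \leq nK$. To obtain the sharp constant $(n-1)K$ one must run the maximum principle on $\phi = |\nabla w|^2/w^2$ and invoke Corollary~\ref{c2.2} for $w$, which has already absorbed the Bakry--\'Emery trace-free splitting with the denominator $n-1$ rather than $n$. The one analytic subtlety, namely the possible non-smoothness of $|\nabla w|$ at critical points of $w$, is handled by the dichotomy above. Finally, the particular case $K=0$ forces $|\nabla u| \equiv 0$, so every bounded-below (in particular every positive) solution of $\Delta_V u = 0$ is constant.
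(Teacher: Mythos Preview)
Your proof is correct and follows essentially the same Cheng--Yau strategy as the paper: shift $u$ to make it positive, apply the maximum principle to the quotient $|\nabla w|/w$ (you use its square, the paper uses the first power), and feed in Corollary~\ref{c2.2} to extract the sharp constant $n-1$. The only cosmetic differences are that the paper derives a global differential inequality for $\phi=|\nabla u|/u$ (which it later reuses for the local estimate in Theorem~\ref{t3.4}) while you compute only at the maximum via the critical-point identities, and your $\epsilon$-regularization is slightly cleaner than the paper's direct substitution $u\mapsto u-\inf u$.
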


\begin{proof} By replacing $u$ by $u-\inf_{\mathcal{M}}u$, we may assume that
$u$ is positive. The proof is similar to that in \cite{Yau75, SchoenYau94,
Li05}. Let $\phi:=|\nabla u|/u=|\nabla\ln u|$. Then
\begin{equation*}
\nabla\phi=\frac{\nabla|\nabla u|}{u}
-\frac{|\nabla u|\nabla u}{u^{2}}.
\end{equation*}
At any point where $\nabla u\neq0$, Using
\begin{equation*}
\Delta_{V}|\nabla u|=u\Delta_{V}\phi
+2\langle\nabla\phi,\nabla u\rangle
+\phi\Delta_{V}u
=u\Delta_{V}\phi+2\langle\nabla\phi,\nabla u\rangle
\end{equation*}
we obtain
\begin{eqnarray*}
\Delta_{V}\phi&=&\frac{\Delta_{V}|\nabla u|}{u}
-\frac{2\langle\nabla\phi,\nabla u\rangle}{u}\\
&\geq&\frac{1}{u|\nabla u|}
\left(\frac{1}{n-1}\left|\nabla(|\nabla u|)\right|^{2}
-K|\nabla u|^{2}\right)-\frac{2\langle\nabla\phi,\nabla u\rangle}{u}\\
&=&\frac{1}{n-1}\frac{|\nabla(|\nabla u|)|^{2}}{u|\nabla u|}
-K\phi-\frac{2\langle\nabla\phi,\nabla u\rangle}{u}.
\end{eqnarray*}
As \cite{SchoenYau94, Li05}, we furthermore get the following inequality
\begin{equation*}
\Delta_{V}\phi\geq-K\phi-\left(2-\frac{2}{n-1}\right)
\frac{\langle\nabla\phi,\nabla u\rangle}{u}
+\frac{1}{n-1}\phi^{3}.
\end{equation*}
If $\phi$ achieves its maximum at some point $p\in\mathcal{M}$, then $\nabla
\phi=\Delta\phi=0$ at $p$ and $\Delta_{V}\phi(p)\leq0$. Plugging this into the above inequality implies $\phi(p)\leq\sqrt{(n-1)K}$ and hence $|\nabla u|\leq\sqrt{(n-1)K}u$ on $\mathcal{M}$.
\end{proof}

Using Lemma \ref{l2.1}, Bakry and Qian \cite{BakryQian00} studied the eigenvalue problem of $\Delta_{V}$.

\section{Bakry-Qian's comparison theorem}\label{section3}

If ${\rm Ric}^{n,m}_{V}\geq K$ for some constant $K$, then the elliptic
operator $\Delta_{V}$ satisfies the $CD(K,n)$ condition in the sense
of Bakry \cite{Bakry94}, see also \cite{BakryQian05, Li05}. Bakry and Qian
proved the following Laplacian comparison theorem for $\Delta_{V}$.

\begin{theorem}\label{t3.1}{\bf (Bakry-Qian, 2005)} Let $(\mathcal{M},g)$ be a complete
$m$-dimensional Riemannian manifold and ${\rm Ric}^{n,m}_{V}\geq (n-1)K$, where $K=K(d(p))$ is a
function depending on the distance function $d(p)=d(p,p_{0})$ for a
fixed point $p_{0}\in\mathcal{M}$. Let $\theta_{K}$ be the solution defined on the maximal interval $(0,\delta_{K})$ of the Riccati equation
\begin{equation}
\dot{\theta}_{K}(r)=-K(r)-\theta^{2}_{K}(r), \ \ \
\lim_{r\to0}r\theta_{K}(r)=n-1,\label{3.1}
\end{equation}
and $\delta_{K}$ is the explosion time of $\theta_{K}$ such that
\begin{equation*}
\lim_{r\to\delta_{K}-}\theta_{K}(r)=-\infty.
\end{equation*}
Then
\begin{itemize}

\item[(i)] If $\delta_{K}<\infty$, then $\mathcal{M}$ is compact and the diameter of $(\mathcal{M},g)$ is bounded from above by $\delta_{K}$.

\item[(ii)] For any $p\in\mathcal{M}\setminus{\rm cut}(p_{0})$, we have
\begin{equation}
\Delta_{V}d\leq(n-1)\theta_{K}(d).
\end{equation}

\item[(iii)] We denote by $\mu_{V}$ an invariant measure for $\Delta_{V}$, that is a solution of $\Delta^{\ast}_{V}(\mu_{V})=0$. By ellipticity, such an invariant measure has a smooth density with respect to $dV_{g}$. Then the Laplacian comparison theorem holds in the sense of distributions:
    \begin{equation}
    \int_{\mathcal{M}}d(\Delta^{\ast}_{V}\varphi)\!\ d\mu_{V}
    \leq\int_{\mathcal{M}}\varphi(m-1)\theta_{K}(d)\!\ d\mu_{V}
    \end{equation}
    for any nonnegative smooth function $\varphi$ on $\mathcal{M}$
    with compact support.

\end{itemize}

\end{theorem}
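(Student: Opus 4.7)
The plan is to apply the $V$-Bochner-Weitzenb\"ock formula of Lemma \ref{l2.1} to $u = d(\cdot,p_0)$, reduce the resulting identity along a unit-speed radial geodesic to a scalar Riccati inequality, and compare against the model ODE (\ref{3.1}) in the Sturm-Jacobi sense. Conclusions (i) and (ii) then drop out from the explosion time of $\theta_K$ together with Hopf-Rinow, while (iii) requires the usual distributional extension across ${\rm cut}(p_0)$.

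Fix $p\in\mathcal{M}\setminus{\rm cut}(p_0)$ and let $\gamma:[0,d(p)]\to\mathcal{M}$ be the unique unit-speed minimizing geodesic from $p_0$ to $p$. On a neighborhood of $\gamma((0,d(p)])$ the distance $d$ is smooth with $|\nabla d|\equiv 1$, so the left-hand side of (\ref{2.1}) vanishes and
\[
0 \;=\; |\nabla^2 d|^2 + {\rm Ric}_V(\nabla d,\nabla d) + \frac{d}{dr}(\Delta_V d\circ\gamma).
\]
Because $\nabla^2 d$ annihilates the radial direction, the Cauchy-Schwarz trace inequality gives $(m-1)|\nabla^2 d|^2\geq(\Delta d)^2=(\Delta_V d-\langle V,\nabla d\rangle)^2$. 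Applying the elementary inequality $(a-b)^2\geq\frac{a^2}{t}-\frac{b^2}{t-1}$ with $t=\frac{n-1}{m-1}>1$, exactly as in the proof of Lemma \ref{l2.1}, and splitting ${\rm Ric}_V={\rm Ric}^{n,m}_V+\frac{1}{n-m}V\otimes V$, the $\langle V,\nabla d\rangle^2$ contributions cancel identically, producing
\[
\frac{d}{dr}(\Delta_V d) \;\leq\; -\frac{(\Delta_V d)^2}{n-1} \;-\; (n-1)K(r).
\]
Setting $\eta(r):=(\Delta_V d\circ\gamma)(r)/(n-1)$ turns this into $\dot\eta\leq-\eta^2-K(r)$, which is precisely the Riccati inequality corresponding to (\ref{3.1}).

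The normal-coordinate expansion gives $\eta(r)\sim\frac{m-1}{(n-1)r}$ as $r\to 0^+$, while the prescribed initial behavior of $\theta_K$ gives $\theta_K(r)\sim\frac{n-1}{r}$; since $(n-1)^2>m-1$ whenever $n>m$, one has $\eta<\theta_K$ for small $r$. A Sturm-Jacobi comparison \emph{--} most cleanly obtained by the substitutions $\eta=\dot\phi/\phi$ and $\theta_K=\dot\phi_K/\phi_K$, which turn $\dot\eta\leq-\eta^2-K$ and $\dot\theta_K=-\theta_K^2-K$ into the linear inequalities $\ddot\phi+K\phi\leq 0$ and $\ddot\phi_K+K\phi_K=0$ \emph{--} then forces $\eta\leq\theta_K$ throughout the common interval, giving (ii). Part (i) follows immediately: the finiteness of $\delta_K$ sends $\theta_K(r)\to-\infty$ as $r\to\delta_K^-$, while $\Delta_V d\circ\gamma$ must remain finite wherever $\gamma$ avoids the cut locus; hence every minimizing geodesic from $p_0$ meets ${\rm cut}(p_0)$ by parameter $\delta_K$, so ${\rm diam}(\mathcal{M})\leq\delta_K$ and Hopf-Rinow yields compactness.

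The step I expect to be the main obstacle is (iii): transferring the pointwise inequality from $\mathcal{M}\setminus{\rm cut}(p_0)$ to a distributional statement against the invariant measure $d\mu_V$ requires showing that integration by parts produces no stray contribution on ${\rm cut}(p_0)$. I would handle this by Calabi's trick, replacing $d$ with $d_\varepsilon(x):=d(x,p_\varepsilon)+d(p_0,p_\varepsilon)$ for nearby $p_\varepsilon$, which is smooth on a neighborhood of any prescribed point, satisfies $d_\varepsilon\geq d$, and converges to $d$ uniformly on compacta; applying (ii) to each $d_\varepsilon$ and passing to the limit delivers the distributional bound. The only subtlety is that the singular part of $\nabla d$ across ${\rm cut}(p_0)$ contributes \emph{nonpositively} to $\Delta_V d$ \emph{--} because $d$ is locally the minimum of finitely many smooth branches \emph{--} so it can be discarded without losing the upper bound. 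All remaining computations are local and parallel the template already used in the proof of Corollary \ref{c2.2}.
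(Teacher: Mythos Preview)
The paper does not prove Theorem~\ref{t3.1}; it is quoted verbatim as a result of Bakry--Qian \cite{BakryQian05} and used thereafter as a black box (the very next line is ``Compared with the space-form, we obtain'' followed by Corollary~\ref{c3.2}). So there is no proof in the paper to compare your proposal against.

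That said, your sketch is the standard argument and is sound. The chain Bochner formula $\Rightarrow$ Riccati inequality for $\Delta_V d$ along a radial geodesic $\Rightarrow$ Sturm/Jacobi comparison with the model solution $\theta_K$ is exactly how Bakry--Qian proceed, and the algebraic step where the $\langle V,\nabla d\rangle^2$ terms cancel after applying $(a-b)^2\ge a^2/t - b^2/(t-1)$ with $t=(n-1)/(m-1)$ is precisely the mechanism that replaces the geometric dimension $m$ by the synthetic dimension $n$. Your treatment of (i) via explosion of $\theta_K$ and Hopf--Rinow is the usual Myers-type argument. For (iii), the Calabi barrier you describe works pointwise; for the global distributional statement the cleaner route is the one you allude to at the end: $d$ is a local minimum of smooth branches near the cut locus, so the singular part of $\Delta_V d$ is a nonpositive measure and can simply be dropped from the upper bound. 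One cosmetic remark: in the initial-condition check you want $r\eta(r)\to (m-1)/(n-1)$ to lie below $r\theta_K(r)\to n-1$, which is the inequality $m-1<(n-1)^2$; your phrasing is correct but it is really $n>m\geq 1$ (hence $n-1>m-1\geq 0$ and, when $m\geq 2$, $(n-1)^2>(m-1)^2\geq m-1$) that drives it, not just $n>m$ alone.
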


Compared with the space-form, we obtain

\begin{corollary}\label{c3.2} If $(\mathcal{M},g)$ is a complete $m$-dimensional
Riemannian manifold with ${\rm Ric}^{n,m}_{V}\geq(n-1)K$, where $K\in{\bf R}$,
and if $p\in\mathcal{M}$, then for any $x\in\mathcal{M}$ where $d(x):=d(x,p)$ is smooth, we have
\begin{equation}
\Delta_{V}d\leq\left\{\begin{array}{cc}
(n-1)\sqrt{K}\cot\left(\sqrt{K}d\right), & K>0,\\
\frac{n-1}{d}, & K=0,\\
(n-1)\sqrt{|K|}\coth\left(\sqrt{|K|}d\right), & K<0.
\end{array}\right.\label{3.4}
\end{equation}
\end{corollary}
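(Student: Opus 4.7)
The plan is to derive Corollary \ref{c3.2} as an immediate specialization of Theorem \ref{t3.1} to the case where the lower bound on ${\rm Ric}^{n,m}_V$ is a constant, i.e.\ where the function $K(r)$ appearing in Theorem \ref{t3.1} is the constant $K$. Under this constancy assumption the Riccati ODE
\begin{equation*}
\dot{\theta}_K(r) = -K - \theta_K^2(r)
\end{equation*}
has an explicit closed-form solution in each of the three sign regimes, and plugging that solution into the pointwise comparison $\Delta_V d \leq (n-1)\theta_K(d)$ from part (ii) of Theorem \ref{t3.1} will give the three bounds in (\ref{3.4}).

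First I would treat $K>0$. The ansatz $\theta_K(r) = \sqrt{K}\cot(\sqrt{K}\,r)$ solves the Riccati equation, since
\begin{equation*}
\dot{\theta}_K(r) = -K\csc^2(\sqrt{K}\,r) = -K - K\cot^2(\sqrt{K}\,r) = -K - \theta_K^2(r),
\end{equation*}
and it satisfies the correct initial condition as $r\to 0^+$; the maximal interval of existence is $(0,\pi/\sqrt{K})$, and on it $\theta_K$ blows down to $-\infty$. For $K=0$ the Riccati equation reduces to $\dot{\theta}_0 = -\theta_0^2$, whose solution with the prescribed singular behavior at the origin is $\theta_0(r) = 1/r$. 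For $K<0$ the analogous hyperbolic ansatz $\theta_K(r) = \sqrt{|K|}\coth(\sqrt{|K|}\,r)$ works, as one checks using $\coth^2 - 1 = \csch^2$ and the identity $\dot{\theta}_K = -|K|\csch^2(\sqrt{|K|}\,r) = |K| - |K|\coth^2(\sqrt{|K|}\,r) = -K - \theta_K^2$.

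Finally I would substitute each of the three explicit expressions for $\theta_K$ into the Bakry-Qian pointwise inequality $\Delta_V d \leq (n-1)\theta_K(d)$ at any $x\in\mathcal{M}\setminus\operatorname{cut}(p)$; this reproduces precisely the three branches of (\ref{3.4}). There is no real obstacle here beyond being careful about the normalization of the initial condition for $\theta_K$ so that the solutions match the standard model-space Jacobi fields; since the three elementary functions above all satisfy $\lim_{r\to 0^+} r\theta_K(r) = 1$, this is the natural normalization consistent with the limiting behavior of the comparison for the Euclidean case $K=0$, and produces the classical $(n-1)/d$ bound in the middle branch, which serves as an independent consistency check.
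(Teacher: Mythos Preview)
Your proposal is correct and follows exactly the approach the paper intends: the paper itself offers no proof beyond the one-line remark ``Compared with the space-form, we obtain'' before stating Corollary~\ref{c3.2}, and your argument simply makes this explicit by solving the constant-coefficient Riccati equation in each sign regime and substituting into Theorem~\ref{t3.1}(ii). Your observation about the normalization $\lim_{r\to 0^+} r\theta_K(r)=1$ is apt; the condition $\lim_{r\to 0} r\theta_K(r)=n-1$ as printed in Theorem~\ref{t3.1} is inconsistent with the factor $(n-1)$ already present in the conclusion $\Delta_V d\leq (n-1)\theta_K(d)$, and your choice is the one that reproduces the classical space-form bounds.
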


Using $x\coth x\leq 1+x$ yields (see also \cite{BakryQian05, Qian98})

\begin{corollary}\label{c3.3} If $(\mathcal{M},g)$ is a complete $m$-dimensional Riemannian manifold with ${\rm Ric}^{n,m}_{V}\geq(n-1)K$, where $K\leq0$, then
\begin{equation}
\Delta_{V}d\leq\frac{n-1}{d}+(n-1)\sqrt{|K|}\label{3.5}
\end{equation}
in the sense of distributions. In particular, if $(\mathcal{M},g)$ is a complete
$m$-dimensional Riemannian manifold with ${\rm Ric}^{n,m}_{V}\geq0$, then
\begin{equation}
d\Delta_{V}d\leq n-1\label{3.6}
\end{equation}
in the sense of distributions.
\end{corollary}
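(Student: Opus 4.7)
The plan is to derive this corollary directly from Corollary \ref{c3.2}, using only an elementary manipulation of $\coth$, and then transfer the pointwise bound to a distributional one via part (iii) of Bakry--Qian's theorem.

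First, I would treat the case $K<0$. Corollary \ref{c3.2} gives, at every point where the distance function $d=d(\cdot,p)$ is smooth (i.e.\ off the cut locus of $p$),
\begin{equation*}
\Delta_V d \;\le\; (n-1)\sqrt{|K|}\,\coth\!\bigl(\sqrt{|K|}\,d\bigr).
\end{equation*}
Setting $x:=\sqrt{|K|}\,d>0$ and invoking the stated elementary inequality $x\coth x \le 1+x$, I divide by $d$ to get $\sqrt{|K|}\coth(\sqrt{|K|}\,d)\le \tfrac{1}{d}+\sqrt{|K|}$, which multiplied by $(n-1)$ yields
\begin{equation*}
\Delta_V d \;\le\; \frac{n-1}{d}+(n-1)\sqrt{|K|}
\end{equation*}
off the cut locus. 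The case $K=0$ of Corollary \ref{c3.2} gives $\Delta_V d\le (n-1)/d$ directly, which is the same bound with $\sqrt{|K|}=0$; hence the pointwise inequality holds for all $K\le 0$ wherever $d$ is smooth.

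Next I would upgrade this to a distributional estimate on all of $\mathcal{M}$. The cut locus has measure zero, but the distance function is only Lipschitz there, and $\Delta_V d$ can carry a negative singular part along it. Invoking part (iii) of Theorem \ref{t3.1}, for any nonnegative smooth test function $\varphi$ with compact support one has
\begin{equation*}
\int_{\mathcal{M}}d(\Delta_V^{\ast}\varphi)\,d\mu_V \;\le\; \int_{\mathcal{M}}\varphi\,(n-1)\theta_K(d)\,d\mu_V,
\end{equation*}
and applying the pointwise bound $(n-1)\theta_K(d)\le \tfrac{n-1}{d}+(n-1)\sqrt{|K|}$ under the integral sign delivers (\ref{3.5}) in the sense of distributions against the invariant measure $\mu_V$. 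The specialization $K=0$ then gives $\Delta_V d\le (n-1)/d$ distributionally, and multiplying through by $d$ (which is nonnegative and continuous, so preserves the inequality tested against nonnegative $\varphi$) yields $d\,\Delta_V d\le n-1$, proving (\ref{3.6}).

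The only mildly delicate step is the last one: the passage from pointwise bounds off the cut locus to a genuine distributional bound on all of $\mathcal{M}$. This is exactly what part (iii) of Bakry--Qian's theorem is designed for, so I would simply quote it; without that statement one would instead have to work with exhausting cutoffs supported in $\mathcal{M}\setminus\mathrm{cut}(p_0)$ and show that the singular part of $\Delta_V d$ along the cut locus has the correct sign. Everything else is a one-line calculus inequality.
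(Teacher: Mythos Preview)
Your proposal is correct and follows exactly the route the paper intends: the paper's entire proof is the single remark ``Using $x\coth x\leq 1+x$ yields'' preceding the corollary, together with a reference to Bakry--Qian for the distributional statement, and you have simply written this out in detail. The only addition you make is the explicit justification of the distributional upgrade via Theorem~\ref{t3.1}(iii), which the paper leaves implicit.
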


\begin{theorem}\label{t3.4} Let $(\mathcal{M},g)$ be a complete $m$-dimensional Riemannian
manifold with ${\rm Ric}^{n,m}_{V}\geq-(n-1)K$ where $K\geq0$ is a constant. If $u$ is a positive solution of $\Delta_{V}u=0$ on $\mathcal{M}$, then
\begin{equation}
\sup_{B(x,r/2)}\frac{|\nabla u|}{u}\leq 8(n-1)\left(\frac{1}{r}+\sqrt{K}\right).
\label{3.7}
\end{equation}
\end{theorem}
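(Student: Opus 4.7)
The plan is to execute the classical Cheng--Yau localization scheme in the $V$-weighted setting, with Corollary~\ref{c2.2} playing the role of the refined Bochner inequality and Corollary~\ref{c3.3} providing the Laplacian comparison needed for the cutoff function.

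Set $\phi := |\nabla u|/u = |\nabla \ln u|$. Combining Corollary~\ref{c2.2} with the identity $\Delta_V|\nabla u| = u\,\Delta_V\phi + 2\langle \nabla u,\nabla\phi\rangle$ (valid because $\Delta_V u = 0$), exactly as in the proof of Theorem~\ref{t2.3}, I would derive the pointwise differential inequality
\[
\Delta_V\phi \;\geq\; \frac{\phi^3}{n-1} - (n-1)K\,\phi - 2\Bigl(1-\tfrac{1}{n-1}\Bigr)\frac{\langle \nabla\phi,\nabla u\rangle}{u}
\]
at every point where $\nabla u \neq 0$. To localize, I would pick a radial cutoff $\eta(y) = \psi(d(y,x)/r)$, where $\psi\colon[0,\infty)\to[0,1]$ is smooth, non-increasing, identically $1$ on $[0,1/2]$, supported in $[0,1]$, and chosen so that both $(\psi')^2/\psi$ and $|\psi''|$ are bounded by absolute constants. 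This yields $|\nabla\eta|^2/\eta \leq C/r^2$, while Corollary~\ref{c3.3} together with the chain rule gives
\[
\Delta_V\eta \;\geq\; -C\,\frac{n-1}{r^2} - C\,\frac{(n-1)\sqrt{K}}{r},
\]
Calabi's upper-barrier trick being used at points of the cut locus of $x$.

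Now let $y_0 \in \overline{B(x,r)}$ be a maximum point of $G := \eta\phi$. At $y_0$, $\nabla G = 0$ forces $\eta\nabla\phi = -\phi\nabla\eta$, and $\Delta_V G \leq 0$. Expanding
\[
0 \;\geq\; \Delta_V(\eta\phi) \;=\; \phi\,\Delta_V\eta + 2\langle\nabla\eta,\nabla\phi\rangle + \eta\,\Delta_V\phi,
\]
substituting the differential inequality for $\Delta_V\phi$ and estimating the cross term $\langle\nabla\phi,\nabla u\rangle/u$ with the help of $|\nabla\phi| = (\phi/\eta)|\nabla\eta|$ at $y_0$ and $|\nabla u|/u = \phi$, one arrives, after absorbing lower-order terms by weighted Young inequalities, at a cubic inequality of the schematic form
\[
\frac{w^{3}}{n-1} \;\leq\; C(n)\Bigl(\frac{1}{r^2}+\frac{\sqrt{K}}{r}\Bigr)\,w + (n-1)K\,w^{2}
\]
for $w := (\eta\phi)(y_0)$. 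Solving this cubic yields $w \leq 8(n-1)(1/r + \sqrt{K})$, and since $\eta \equiv 1$ on $B(x,r/2)$ this gives the desired supremum bound on $\phi$.

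Two points require care. First, the cut locus of $x$ obstructs a naive application of Corollary~\ref{c3.3} at the maximum point; this is resolved by the standard device of replacing $d(\cdot,x)$ with a smooth upper barrier along a minimizing geodesic, yielding the required differential inequality in the viscosity sense at $y_0$. Second, producing the explicit constant $8(n-1)$ (rather than a generic $C(n)$) demands a careful choice of the weights in the Young inequalities that absorb the cross term $\langle\nabla\phi,\nabla u\rangle/u$ into $\phi^3/(n-1)$ and absorb $|\nabla\eta|^2/\eta$ into the cubic term; this bookkeeping, rather than any single step, is where I expect the main work of the proof to lie.
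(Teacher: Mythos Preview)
Your strategy matches the paper's exactly: the same refined Bochner inequality for $\phi=|\nabla u|/u$ (derived as in Theorem~\ref{t2.3}), the same Laplacian comparison (Corollary~\ref{c3.3}), Calabi's trick at the cut locus, and a maximum-principle argument on a localized quantity. The one substantive difference is the localization itself. The paper does \emph{not} use a smooth cutoff $\eta$ but rather the explicit polynomial weight
\[
F(y)=(r^{2}-d^{2}(x,y))\,\phi(y),
\]
and this choice is doing real work in handling the cross term. At the maximum $x_0$ of $F$ one has $\nabla\phi/\phi=\nabla(d^{2})/(r^{2}-d^{2})$, so
\[
\frac{\langle\nabla\phi,\nabla u\rangle}{\phi\,u}\le \frac{2d}{r^{2}-d^{2}}\,\phi.
\]
After multiplying the differential inequality by $(r^{2}-d^{2})^{2}$, this cross term becomes $\frac{4(n-2)}{n-1}\,d\,F$, i.e.\ \emph{linear} in $F$ with coefficient $d\le r$. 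One then has a clean quadratic in $F$, which the paper solves directly to get $F\le 4\sqrt{2}(n-1)r(1+\sqrt{K}r)$; combining with $F\ge \tfrac{3}{4}r^{2}\sup_{B(x,r/2)}\phi$ gives the constant $\tfrac{16\sqrt{2}}{3}<8$.

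With a generic smooth cutoff satisfying only $|\nabla\eta|^{2}/\eta\le C/r^{2}$, the same manipulation leaves a factor $|\nabla\eta|/\eta$ in the cross term, which after multiplying through by powers of $\eta$ produces a residual $1/\sqrt{\eta(y_{0})}$ that is not a priori bounded. Your schematic cubic inequality therefore does not close as written; the ``weighted Young inequalities'' you allude to cannot absorb this without either an a priori lower bound on $\eta(y_{0})$ or a cutoff whose gradient-to-value ratio is controlled (which is exactly what the polynomial weight provides, since $|\nabla(r^{2}-d^{2})|/(r^{2}-d^{2})=2d/(r^{2}-d^{2})$ multiplies back to a bounded quantity). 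The fix is simply to use the paper's weight rather than a generic $\eta$; the rest of your outline then goes through verbatim and yields the explicit constant.
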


\begin{proof} Recall
\begin{equation*}
\Delta_{V}\phi\geq-(n-1)K\phi-\left(2-\frac{2}{n-1}\right)
\frac{\langle\nabla\phi,\nabla u\rangle}{u}
+\frac{1}{n-1}\phi^{3}, \ \ \ \phi:=\frac{|\nabla u|}{u}.
\end{equation*}
For any $r>0$, we consider the quantity
\begin{equation*}
F(y):=(r^{2}-d^{2}(x,y))\phi(y), \ \ \ y\in B(x,r).
\end{equation*}
It is clear that
\begin{equation*}
\nabla F=-\phi\Delta(d^{2})+(r^{2}-d^{2})\nabla\phi, \ \ \
\Delta_{V}F=(r^{2}-d^{2})\Delta_{V}\phi-\phi\Delta_{V}(d^{2})
-2\langle\nabla(d^{2}),\nabla\phi\rangle.
\end{equation*}
Now the proof of the above estimate is similar to Theorem 3.1 (page 19--20) in \cite{SchoenYau94}
or Theorem 2.3 (page 1313--1314) in \cite{Li05}. Since $F=0$ on the boundary of $B(x,r)$, if $|\nabla u|\neq0$, then $F$ must achieve its maximum at some $x_{0}\in B(x,r)$. By Calabi's argument \cite{Calabi57, ChengYau75, SchoenYau94}, we may assume that $x_{0}$ is not a cut point of $x$. Then $F$ is smooth near $x_{0}$ and hence
\begin{equation*}
\Delta F\leq0=\nabla F \ \ \ \text{at} \ x_{0}.
\end{equation*}
It follows that $\Delta_{V}F(x_{0})=\Delta F(x_{0})+\langle V,\nabla F\rangle(x_{0})
\leq0$ and then
\begin{equation*}
\frac{\nabla\phi}{\phi}=\frac{\nabla(d^{2})}{r^{2}-d^{2}}, \ \ \
\frac{\Delta_{V}\phi}{\phi}
-\frac{\Delta_{V}(d^{2})}{r^{2}-d^{2}}-\frac{2\langle\nabla(d^{2}),
\nabla\phi\rangle}{\phi(r^{2}-d^{2})}\leq0 \ \ \ \text{at} \ x_{0}.
\end{equation*}
Consequently,
\begin{equation*}
\frac{\Delta_{V}\phi}{\phi}
-\frac{\Delta_{V}(d^{2})}{r^{2}-d^{2}}
-\frac{2|\nabla(d^{2})|^{2}}{(r^{2}-d^{2})^{2}}\leq0 \ \ \ \text{at} \ x_{0}.
\end{equation*}
By (3.5) we have
\begin{equation*}
\Delta_{V}(d^{2})=2d\Delta_{V}d+2|\nabla d|^{2}\leq 2+2(n-1)
(1+\sqrt{K}d)
\end{equation*}
so that, using $|\nabla(d^{2})|^{2}=4d^{2}$,
\begin{eqnarray*}
0&\geq&\frac{\Delta_{V}\phi}{\phi}-\frac{2+2(n-1)(1+\sqrt{K}d)}{r^{2}-d^{2}}
-\frac{8d^{2}}{(r^{2}-d^{2})^{2}}\\
&\geq&-(n-1)K-\left(2-\frac{2}{n-1}\right)\frac{\langle\nabla\phi,\nabla u\rangle}{\phi u}
+\frac{1}{n-1}\phi^{2}\\
&&- \ \frac{2+2(n-1)(1+\sqrt{K}d)}{r^{2}-d^{2}}
-\frac{8d^{2}}{(r^{2}-d^{2})^{2}}
\end{eqnarray*}
at $x_{0}$. On the other hand,
\begin{equation*}
\frac{\langle\nabla\phi,\nabla u\rangle}{\phi u}
=\left\langle\frac{\nabla\phi}{\phi},\frac{\nabla u}{u}\right\rangle
=\frac{\nabla(d^{2}),\nabla u\rangle}{(r^{2}-d^{2})u}
=\frac{2d\langle\nabla d,\nabla u\rangle}{(r^{2}-d^{2})u}
\leq\frac{2d}{r^{2}-d^{2}}\phi.
\end{equation*}
Therefore
\begin{equation*}
0\geq\frac{1}{n-1}F^{2}
-\frac{4(n-2)}{n-1}dF-[2+2(n-1)(1+\sqrt{K}d)](r^{2}-d^{2})
-8d^{2}-(n-1)K(r^{2}-d^{2})^{2}
\end{equation*}
at $x_{0}$. When $n=2$, the above inequality becomes
\begin{equation*}
F\leq \sqrt{Kr^{4}+(12+2\sqrt{K}r)r^{2}}
\leq\sqrt{12}r(1+\sqrt{K}r).
\end{equation*}
When $n\geq3$, we arrive at
\begin{equation*}
\frac{1}{n-1}F^{2}-\frac{4(n-2)}{n-1}rF
\leq[2+2(n-1)(1+\sqrt{K}r)]r^{2}+8r^{2}+(n-1)Kr^{4}
\end{equation*}
and hence
\begin{eqnarray*}
F(x_{0})&\leq& r\left[2(n-2)+(n-1)\sqrt{(\sqrt{K}r)^{2}
+2\sqrt{K}r+6+\frac{2(n+1)}{(n-1)^{2}}}\right]\\
&\leq&r\left[2(n-2)+(n-1)\sqrt{8}(1+\sqrt{K}r)\right]\\
&\leq&4\sqrt{2}(n-1)r(1+\sqrt{K}r).
\end{eqnarray*}
In both case, we obtain
\begin{equation*}
F\leq4\sqrt{2}(n-1)r(1+\sqrt{K}r) \ \ \ \text{on} \ B(x,r).
\end{equation*}
In particular
\begin{equation*}
\frac{3}{4}r^{2}\sup_{B(x,r/2)}\frac{|\nabla u|}{u}
\leq\sup_{B(x,r/2)}F\leq4\sqrt{2}(n-1)r(1+\sqrt{K}r)
\end{equation*}
which implies
\begin{equation*}
\sup_{B(x,r/2)}\frac{|\nabla u|}{u}
\leq\frac{16\sqrt{2}}{3}(n-1)\left(\frac{1}{r}+\sqrt{K}\right)
\leq8(n-1)\left(\frac{1}{r}+\sqrt{K}\right).
\end{equation*}
This is the desired estimate.
\end{proof}

As an immediate consequence, we have the following variants corollaries
parallel to these in \cite{SchoenYau94, Li05}.

\begin{corollary}\label{c3.5} Let $(\mathcal{M},g)$ be a complete $m$-dimensional Riemannian manifold with ${\rm Ric}^{n,m}_{V}\geq-(n-1)K$ where $K\geq0$ is a constant.
\begin{itemize}

\item[(i)] If $(\mathcal{M},g)$ is noncompact and $u$ is a positive solution of $\Delta_{V}u=0$ on $\mathcal{M}$, then
    \begin{equation}
    \sup_{\mathcal{M}}\frac{|\nabla u|}{u}\leq 8(n-1)\sqrt{K}.
    \label{3.8}
    \end{equation}

\item[(ii)] If $u$ is a solution of $\Delta_{V}u=0$ on a geodesic ball $B(x,r)$, then
\begin{equation}
\sup_{B(x,r/2)}|\nabla u|\leq 16(n-1)\left(\frac{1}{r}+\sqrt{K}\right)
\sup_{B(x,r)}|u|.\label{3.9}
\end{equation}

\item[(iii)] If $u$ is a positive solution of $\Delta_{V}u=0$ on a geodesic ball $B(x,r)$, then
\begin{equation}
\sup_{B(x,r/2)}u\leq e^{8(n-1)(1+2r\sqrt{K})}
\inf_{B(x,r/2)}u.\label{3.10}
\end{equation}
\end{itemize}
\end{corollary}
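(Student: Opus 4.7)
All three conclusions will be deduced from the local gradient estimate of Theorem~\ref{t3.4}. The first thing I would observe is that, although Theorem~\ref{t3.4} is stated for $u$ positive on all of $\mathcal{M}$, its proof uses only the auxiliary function $F(y)=(r^{2}-d^{2}(x,y))\phi(y)$, which vanishes on $\partial B(x,r)$ and therefore attains its maximum in the interior. Consequently the bound
\begin{equation*}
\sup_{B(x,r/2)}\frac{|\nabla u|}{u}\leq 8(n-1)\left(\frac{1}{r}+\sqrt{K}\right)
\end{equation*}
is purely local and remains valid whenever $u$ is a positive solution of $\Delta_{V}u=0$ on the ball $B(x,r)$. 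This observation is what lets us use the same theorem to prove both (ii) and (iii), where $u$ is only defined on $B(x,r)$.

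For part (i), I would fix an arbitrary $y\in\mathcal{M}$, set $x=y$ in the estimate, and let $r\to\infty$; the completeness and noncompactness guarantee that arbitrarily large balls $B(y,r)$ exist inside $\mathcal{M}$, and the $1/r$ term disappears, yielding $|\nabla u|(y)/u(y)\leq 8(n-1)\sqrt{K}$.

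For part (ii), I would apply a standard shift: for $\epsilon>0$ set $v:=u+\sup_{B(x,r)}|u|+\epsilon$. Then $v$ is a positive solution of $\Delta_{V}v=0$ on $B(x,r)$ with $\nabla v=\nabla u$ and $v\leq 2\sup_{B(x,r)}|u|+\epsilon$. The local form of Theorem~\ref{t3.4} applied to $v$ gives $|\nabla u|\leq 8(n-1)(1/r+\sqrt{K})(2\sup_{B(x,r)}|u|+\epsilon)$, and letting $\epsilon\to 0$ produces (\ref{3.9}).

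For part (iii), the plan is a Harnack-type integration of $\nabla\ln u$ along a path. Given $x_{1},x_{2}\in B(x,r/2)$, I would connect them by the broken geodesic $\gamma:=\gamma_{1}\cup\gamma_{2}$, where $\gamma_{i}$ is a minimizing geodesic from $x_{i}$ to $x$. The total length is at most $r$, and since any point on $\gamma_{i}$ has distance at most $d(x,x_{i})\leq r/2$ from $x$, the curve $\gamma$ stays inside $\overline{B(x,r/2)}$, where the gradient estimate applies. Therefore
\begin{equation*}
\left|\ln\frac{u(x_{1})}{u(x_{2})}\right|\leq\int_{\gamma}|\nabla\ln u|\,ds\leq r\sup_{B(x,r/2)}\frac{|\nabla u|}{u}\leq 8(n-1)(1+r\sqrt{K}),
\end{equation*}
which is even sharper than (\ref{3.10}) and certainly implies it. The only conceptual point requiring care is keeping the integration path inside the region where the gradient bound holds; routing through the center $x$ rather than along the direct minimizing geodesic from $x_{1}$ to $x_{2}$ (which could exit $B(x,r/2)$) resolves this cleanly, and is the main technical obstacle in the argument.
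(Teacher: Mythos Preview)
Your proposal is correct and follows precisely the standard route the paper has in mind: the paper gives no explicit proof of Corollary~\ref{c3.5} but simply declares it ``an immediate consequence'' of Theorem~\ref{t3.4}, ``parallel to these in \cite{SchoenYau94, Li05}''. Your three arguments---letting $r\to\infty$ for (i), shifting by $\sup|u|+\epsilon$ for (ii), and integrating $\nabla\ln u$ along a broken geodesic through the center for (iii)---are exactly the classical deductions. Your observation that routing through $x$ keeps the path inside $B(x,r/2)$ (where the gradient bound is available) is the right way to handle the only genuine subtlety; note that it even yields the exponent $8(n-1)(1+r\sqrt{K})$, slightly sharper than the paper's stated $8(n-1)(1+2r\sqrt{K})$.
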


\section{A generalized diffusion operator}\label{section4}

Recall that a triple $(\mathcal{M}, g, \mu)$ is called a {\it weighted 
Riemannian manifold} (for more detail, see \cite{Grigoryan09}), if $(\mathcal{M},g)$ is a Riemannian manifold and $\mu$ is a measure on
$\mathcal{M}$ with a smooth positive density function $f$ (that is, $d\mu=fd
V_{g}$). The {\it weighted divergence} and the {\it weighted Laplace operator} are defined by
\begin{equation*}
{\rm div}_{\mu}=\frac{1}{f}{\rm div}(f\!\ ), \ \ \ \Delta_{\mu}:=
{\rm div}_{\mu}\circ\nabla
\end{equation*}
respectively, where $\nabla$ is the Levi-Civita connection of $g$. There are two
examples of $\Delta_{\mu}$:
\begin{itemize}

\item[(a)] When $V=\nabla f$, the operator $\Delta_{V}$ is exactly the weighted
Laplace operator of the weighted Riemannian manifold $(\mathcal{M}, g,\mu)$
where $\mu=e^{f}dV_{g})$. Indeed,
\begin{equation*}
\Delta_{\mu}=\frac{1}{e^{f}}{\rm div}(e^{f}\nabla\!\ )=\frac{1}{e^{f}}
\left(e^{f}\Delta+\langle\nabla e^{f},\nabla\!\ \rangle\right)=\Delta+\langle
\nabla f,\nabla\!\ \rangle=:\Delta_{f}.
\end{equation*}

\item[(b)] In \cite{Mastrolia-Rigoli10}, the authors introduced a diffusion-type operator
\begin{equation*}
L=\frac{1}{B}{\rm div}(A\nabla\!\ )
\end{equation*}
where $A, B$ are some sufficiently smooth positive functions on $\mathcal{M}$. Set
\begin{equation*}
\tilde{g}:=\frac{B}{A}g, \ \ \ d\tilde{\mu}:=B\!\ dV_{g}.
\end{equation*}
Then $L$ is the weighted Laplace operator of the weighted Riemannian manifold $(\mathcal{M},
\tilde{g},\tilde{\mu})$ since
\begin{equation*}
\widetilde{\Delta}_{\tilde{\mu}}=
{\rm div}_{\tilde{\mu}}\circ\widetilde{\nabla}
=\frac{1}{B}{\rm div}\left(B\frac{A}{B}\nabla\!\ \right)=L.
\end{equation*}

\end{itemize}
In both cases, $\Delta_{f}$ or $L$ can be viewed as the special case of
$\Delta_{V}$ on some Riemannian manifold. In this section we study the following
diffusion Poisson equation
\begin{equation}
\Delta_{V}u=F(u)\label{4.1}
\end{equation}
on a complete noncompact $m$-dimensional Riemannian manifold $\mathcal{M}$, 
where $m\geq2$. Let $B(p,r)$ denote the geodesic ball of radius $r>0$ centered at $p$ and $d(x):={\rm dist}_{g}(x,p)$.

\begin{lemma}\label{l4.1} Let ${\rm Ric}^{n,m}_{V}\geq-(n-1)K$ on $B(p,r)$, where $K\geq0$ is a
constant and $n>m$, and $u\in C^{3}(\mathcal{M})$ is a solution of $\Delta_{V}u
=F(u)$ on $\mathcal{M}$ for some $F\in C^{1}({\bf R})$. Consider the function
\begin{equation}
H(x)=[r^{2}-d^{2}(x)]^{2}|\nabla u|^{2}(x)G[u(x)]\label{4.2}
\end{equation}
where $G\in C^{2}({\bf R})$ and $G(u)>0$ on $B(p,r)$. Then
\begin{eqnarray*}
&&\Delta_{V}\ln H+\left\langle\nabla\ln H,\nabla\ln H+\frac{8d\nabla d}{r^{2}-d^{2}}
-\frac{2G'(u)}{G(u)}\nabla u\right\rangle\\
&\geq&-2(n-1)K+2F'(u)+\frac{G'(u)}{G(u)}F(u)
+\frac{2G(u)G''(u)-3G'(u)^{2}}{2G(u)^{2}}
|\nabla u|^{2}\\
&&- \ \frac{4dG'(u)}{(r^{2}-d^{2})G(u)}|\nabla u|
-\frac{4[n+(n-1)\sqrt{K}d]}{r^{2}-d^{2}}
-\frac{16d^{2}}{(r^{2}-d^{2})^{2}},
\end{eqnarray*}
and
\begin{eqnarray*}
&&\Delta_{V}\ln H+2\left\langle\nabla\ln H,
\nabla\ln H+\frac{8d\nabla d}{r^{2}-d^{2}}
-\frac{2G'(u)}{G(u)}\nabla u\right\rangle\\
&\geq&-2(n-1)K+2F'(u)
+\frac{8G(u)G''(u)-(8+n)G''(u)^{2}}{8G(u)^{2}}|\nabla u|^{2}\\
&&- \ \frac{8dG'(u)}{(r^{2}-d^{2})G(u)}|\nabla u|
-\frac{4[n+(n-1)d\sqrt{K}]}{r^{2}-d^{2}}
-\frac{24d^{2}}{(r^{2}-d^{2})^{2}}.
\end{eqnarray*}
on points where $H$ is positive.
\end{lemma}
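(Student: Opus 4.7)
The plan is to decompose $\ln H$ into three additive pieces and compute $\Delta_V \ln H$ summand by summand, then combine with the cross-terms on the left-hand side so that the extraneous $|B|^2$-type contribution cancels. Writing
\begin{equation*}
\ln H = 2\ln(r^2-d^2) + \ln|\nabla u|^2 + \ln G(u),
\end{equation*}
I label the three gradient contributions $A := -\frac{4d\nabla d}{r^2-d^2}$, $B := \frac{\nabla |\nabla u|^2}{|\nabla u|^2}$, $C := \frac{G'(u)}{G(u)}\nabla u$, so that $\nabla \ln H = A+B+C$ and the auxiliary vector in the statement is exactly $X := -2A - 2C$. A direct expansion shows
\begin{equation*}
\Delta_V \ln H + \langle \nabla \ln H, \nabla \ln H + X\rangle = \Delta_V \ln H - |A|^2 + |B|^2 - |C|^2 - 2\langle A, C\rangle,
\end{equation*}
and analogously with the factor $2$ one obtains $\Delta_V \ln H - 2|A|^2 + 2|B|^2 - 2|C|^2 - 4\langle A, C\rangle$. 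These are the two left-hand sides we must bound.

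Next I would evaluate $\Delta_V \ln H$ using $\Delta_V \ln f = \Delta_V f/f - |\nabla f|^2/f^2$ on each summand. For the distance piece, Corollary \ref{c3.3} gives $\Delta_V(d^2) \leq 2n + 2(n-1)\sqrt{K}\,d$, producing the term $-\frac{4[n+(n-1)\sqrt{K}d]}{r^2 - d^2}$ together with $-\frac{8d^2}{(r^2-d^2)^2}$. For the gradient piece, Bochner--Weitzenb\"ock (2.3) together with ${\rm Ric}^{n,m}_V \geq -(n-1)K$ and the identity $\langle \nabla \Delta_V u, \nabla u\rangle = F'(u)|\nabla u|^2$ (using $\Delta_V u = F(u)$) yields
\begin{equation*}
\Delta_V \ln |\nabla u|^2 \geq 2F'(u) - 2(n-1)K + \frac{2|\nabla^2 u|^2}{|\nabla u|^2} - |B|^2,
\end{equation*}
so the $-|B|^2$ exactly cancels the $+|B|^2$ from the algebraic identity. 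Direct computation on the $G$ summand gives $\Delta_V \ln G(u) = \frac{G'(u)F(u)}{G(u)} + \frac{G''(u)|\nabla u|^2}{G(u)} - |C|^2$.

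It remains to dispatch the cross terms. By Cauchy--Schwarz, $|\langle \nabla d, \nabla u\rangle| \leq |\nabla u|$, which converts $-2\langle A,C\rangle$ (resp. $-4\langle A,C\rangle$) into the linear-in-$|\nabla u|$ terms $-\frac{4d G'(u)|\nabla u|}{(r^2-d^2)G(u)}$ and $-\frac{8d G'(u)|\nabla u|}{(r^2-d^2)G(u)}$ appearing on the right-hand sides. Collecting $-\frac{8d^2}{(r^2-d^2)^2}$ together with $-|A|^2 = -\frac{16d^2}{(r^2-d^2)^2}$ (or $-2|A|^2 = -\frac{32d^2}{(r^2-d^2)^2}$) and compensating a portion through Young's inequality yields the claimed $-\frac{16d^2}{(r^2-d^2)^2}$ and $-\frac{24d^2}{(r^2-d^2)^2}$. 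Discarding the non-negative $\frac{2|\nabla^2 u|^2}{|\nabla u|^2}$ in (i), and in (ii) retaining it via the refined Bochner bound (2.2) $|\nabla^2 u|^2 \geq (\Delta_V u)^2/n$ in order to absorb part of $-2|C|^2$, produces the final quadratic-in-$G'(u)/G(u)$ coefficients $(2GG''-3G'^2)/(2G^2)$ and $(8GG'' - (n+8)G'^2)/(8G^2)$.

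The main obstacle is the coefficient calibration in this last step: the dimensional factor $n+8$ in (ii) signals that one must retain the refined Bochner inequality (2.2) rather than merely (2.3), and then distribute the resulting $(\Delta_V u)^2/(n|\nabla u|^2) = F(u)^2/(n|\nabla u|^2)$ and the remaining slack from the Hessian term between compensating for $-2|C|^2$ and supplementing $-2|A|^2$, with Young parameters calibrated to $n$. Everything else reduces to algebraic bookkeeping assembled from Lemma \ref{l2.1}, Corollary \ref{c3.3}, and elementary inequalities.
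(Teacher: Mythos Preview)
Your decomposition $\nabla\ln H = A+B+C$ and the algebraic identity
\[
\Delta_V\ln H + \langle\nabla\ln H,\nabla\ln H+X\rangle
= \Delta_V\ln H - |A|^2 + |B|^2 - |C|^2 - 2\langle A,C\rangle
\]
are correct, and the treatment of the distance piece via Corollary~\ref{c3.3}, the Bochner piece via (2.3), and the $G$-piece are exactly what the paper does.

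The gap is in the final step for inequality~(i). You write that ``discarding the non-negative $\tfrac{2|\nabla^2 u|^2}{|\nabla u|^2}$'' produces the coefficients $(2GG''-3G'^2)/(2G^2)$ and $-16d^2/(r^2-d^2)^2$. It does not. After your bookkeeping the right side carries $-|A|^2-\tfrac{8d^2}{(r^2-d^2)^2}=-\tfrac{24d^2}{(r^2-d^2)^2}$ and $-2|C|^2$, i.e.\ the $|\nabla u|^2$-coefficient is $(GG''-2G'^2)/G^2$, strictly smaller than the claimed ones. No Young inequality can turn $-24$ into $-16$; you are missing a genuinely positive contribution. What the paper actually uses is Kato's inequality
\[
\frac{2|\nabla^2 u|^2}{|\nabla u|^2}\ \ge\ \frac{|\nabla|\nabla u|^2|^2}{2|\nabla u|^4}=\tfrac12|B|^2
=\tfrac12|\nabla\ln H - A - C|^2,
\]
and then \emph{expands} this square: the pure pieces $\tfrac12|A|^2$ and $\tfrac12|C|^2$ supply exactly the missing $+8d^2/(r^2-d^2)^2$ and $+\tfrac12(G'/G)^2|\nabla u|^2$, while the cross pieces $\tfrac12|\nabla\ln H|^2-\langle\nabla\ln H,A+C\rangle$ are transferred to the left-hand side. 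This transfer, not your algebraic cancellation of $|B|^2$, is what fixes the precise form of the vector $X$ in the statement. (Incidentally, the paper's displayed expansion of $\tfrac12|B|^2$ drops the factor $\tfrac12$ on those last two pieces, so the stated left-hand side carries a spurious factor of~$2$; your clean algebra actually exposes this.)

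For inequality~(ii) your instinct to invoke the refined Bochner is right, but the role of $(\Delta_V u)^2/n$ is not to ``absorb part of $-2|C|^2$''. In the paper one combines (2.4) with the elementary bound $|\nabla^2 u|^2\ge\tfrac1m(\Delta u)^2$ and the choice $\gamma=(n-m)/m$ to obtain $\tfrac{2(\Delta_V u)^2}{n|\nabla u|^2}$, and then applies the Young-type inequality
\[
\frac{2(\Delta_V u)^2}{n|\nabla u|^2}\ \ge\ -\frac{G'(u)}{G(u)}\Delta_V u - \frac{n}{8}\frac{G'(u)^2}{G(u)^2}|\nabla u|^2.
\]
The first term on the right cancels the $+\tfrac{G'}{G}F(u)$ coming from $\Delta_V\ln G(u)$ (this is why (ii) has no $F(u)$ term at all), and the second contributes the $n/8$ in the coefficient $(8+n)/8$. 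The remaining $1$ in $(8+n)/8$ then comes from a second pass through Kato exactly as in~(i), now with the doubled left-hand side.
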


\begin{proof} On points where $H$ is positive, we get
\begin{eqnarray*}
\nabla\ln H&=&\frac{\nabla H}{H} \ \ = \ \ \frac{G'(u)}{G(u)}
\nabla u+\frac{\nabla|\nabla u|^{2}}{|\nabla u|^{2}}
-\frac{2\nabla(d^{2})}{r^{2}-d^{2}},\\
\Delta_{V}\ln H&=&\frac{\Delta_{V}H}{H}-|\nabla\ln H|^{2}\\
&=&-2\frac{\Delta_{V}(d^{2})}{r^{2}-d^{2}}
+\frac{\Delta_{V}|\nabla u|^{2}}{|\nabla u|^{2}}
+\frac{G'(u)}{G(u)}\Delta_{V}u-2\frac{|\nabla(d^{2})|^{2}}{(r^{2}-d^{2})^{2}}\\
&&+ \ \frac{G(u)G''(u)-G'(u)^{2}}{G(u)^{2}}|\nabla u|^{2}
-\frac{|\nabla|\nabla u|^{2}|^{2}}{|\nabla u|^{4}}.
\end{eqnarray*}
By (\ref{2.3}) and Kato's inequality
\begin{equation*}
|\nabla|\nabla u|^{2}|^{2}\leq 4|\nabla u|^{2}|\nabla^{2}u|^{2},
\end{equation*}
we arrive at
\begin{equation*}
\frac{\Delta_{V}|\nabla u|^{2}}{|\nabla u|^{2}}
\geq\frac{|\nabla|\nabla u|^{2}|^{2}}{2|\nabla u|^{4}}
-2(n-1)K+2F'(u).
\end{equation*}
Using the facts $\Delta_{V}(d^{2})\leq 2+2(n-1)(1+\sqrt{K}d)$ and $|\nabla(d^{2})|^{2}
=4d^{2}$ yields
\begin{eqnarray*}
\Delta_{V}\ln H&\geq&-2(n-1)K+2F'(u)+\frac{G'(u)}{G(u)}F(u)-\frac{|\nabla|\nabla u|^{2}|^{2}}{2|\nabla u|^{4}}\\
&&+ \ \frac{G(u)G''(u)-G'(u)^{2}}{G(u)^{2}}|\nabla u|^{2}
-\frac{4[n+(n-1)d\sqrt{K}]}{r^{2}-d^{2}}
-\frac{8d^{2}}{(r^{2}-d^{2})^{2}}.
\end{eqnarray*}
On the other hand, we have
\begin{eqnarray*}
\frac{|\nabla|\nabla u|^{2}|^{2}}{2|\nabla u|^{4}}
&=&\frac{1}{2}\left(\nabla\ln H+\frac{2\nabla(d^{2})}{r^{2}-d^{2}}
-\frac{G'(u)}{G(u)}\nabla u\right)^{2}\\
&=&\frac{G'(u)^{2}}{2G(u)^{2}}|\nabla u|^{2}
+\frac{8d^{2}}{(r^{2}-d^{2})^{2}}
-\frac{4dG'(u)}{(r^{2}-d^{2})G(u)}
\langle\nabla u,\nabla d\rangle\\
&&+ \ (\nabla\ln H)^{2}
+\left\langle\nabla\ln H,\frac{8d\nabla d}{r^{2}-d^{2}}
-\frac{2G'(u)}{G(u)}\nabla u\right\rangle
\end{eqnarray*}
which implies the following inequality
\begin{eqnarray*}
&&\Delta_{V}\ln H+\left\langle\nabla\ln H,\nabla\ln H+\frac{8d\nabla d}{r^{2}-d^{2}}
-\frac{2G'(u)}{G(u)}\nabla u\right\rangle\\
&\geq&-2(n-1)K+2F'(u)+\frac{G'(u)}{G(u)}F(u)
+\frac{2G(u)G''(u)-3G'(u)^{2}}{2G(u)^{2}}
|\nabla u|^{2}\\
&&- \ \frac{4dG'(u)}{(r^{2}-d^{2})G(u)}|\nabla u|
-\frac{4[n+(n-1)\sqrt{K}d]}{r^{2}-d^{2}}
-\frac{16d^{2}}{(r^{2}-d^{2})^{2}}.
\end{eqnarray*}

Recall the formula proved in Lemma \ref{l2.1}
\begin{equation*}
\frac{1}{2}\Delta_{V}|\nabla u|^{2}
=|\nabla^{2}u|^{2}+{\rm Ric}^{n,m}_{V}(\nabla u,\nabla u)
+\langle\nabla\Delta_{V}u,\nabla u\rangle
+\frac{1}{n-m}\langle V,\nabla u\rangle^{2}.
\end{equation*}
Therefore
\begin{equation*}
\frac{\Delta_{V}|\nabla u|^{2}}{|\nabla u|^{2}}
\geq 2\frac{|\nabla^{2}u|^{2}}{|\nabla u|^{2}}
-2(n-1)K+2F'(u)
+\frac{2}{n-m}\frac{\langle V,\nabla u\rangle^{2}}{|\nabla u|^{2}}.
\end{equation*}
As in \cite{Mastrolia-Rigoli10}, we have
\begin{equation*}
\frac{|\nabla^{2}u|^{2}}{|\nabla u|^{2}}
\geq\frac{1}{m|\nabla u|^{2}}
\left(\frac{(\Delta_{V}u)^{2}}{1+\gamma}
-\frac{\langle V,\nabla u\rangle^{2}}{\gamma}\right)
\end{equation*}
for any $\gamma>0$, and hence
\begin{equation*}
\frac{\Delta_{V}|\nabla u|^{2}}{|\nabla u|^{2}}
\geq-2(n-1)K+2F'(u)
-\frac{G'(u)}{G(u)}
\Delta_{V}u
-\frac{n}{8}\frac{G'(u)^{2}}{G(u)^{2}}|\nabla u|^{2}
\end{equation*}
by taking $\gamma=\frac{n-m}{m}$. Consequently
\begin{eqnarray*}
&&\Delta_{V}\ln H+2\left\langle\nabla\ln H,
\nabla\ln H+\frac{8d\nabla d}{r^{2}-d^{2}}
-\frac{2G'(u)}{G(u)}\nabla u\right\rangle\\
&\geq&-2(n-1)K+2F'(u)
+\frac{8G(u)G''(u)-(8+n)G''(u)^{2}}{8G(u)^{2}}|\nabla u|^{2}\\
&&- \ \frac{8dG'(u)}{(r^{2}-d^{2})G(u)}|\nabla u|
-\frac{4[n+(n-1)d\sqrt{K}]}{r^{2}-d^{2}}
-\frac{24d^{2}}{(r^{2}-d^{2})^{2}}.
\end{eqnarray*}
\end{proof}

It is observed that the above lemma is similar to that in \cite{Mastrolia10}
(Lemma 1.2, page 14). As a consequence we have

\begin{theorem}\label{t4.2} Let $(\mathcal{M},g)$ be a complete $m$-dimensional Riemannian
manifold with ${\rm Ric}^{n,m}_{V}\geq-(n-1)K(1+d^{2})^{\delta/2}$, where
$K\geq0$, $\delta<4$, and $d$ denotes the distance function from a fixed
point. If $F\in C^{1}({\bf R})$ and $u\in C^{3}(\mathcal{M})$ is a global solution of
\begin{equation*}
\Delta_{V}u=F(u)
\end{equation*}
with
\begin{equation*}
|u|\leq D(1+d)^{\nu}, \ \ \ F'(u)\geq(n-1)K(1+d^{2})^{\delta/2}
\end{equation*}
on $\mathcal{M}$ for some constants $D>0$ and $0<\nu<\min\{1,1-\frac{\delta}{4}\}$, then $u$ must be constant.
\end{theorem}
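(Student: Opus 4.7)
The plan is to apply a variable-curvature version of Lemma~\ref{l4.1} on a geodesic ball $B(p,r)$, then let $r\to\infty$, using the growth hypothesis $|u|\leq D(1+d)^{\nu}$ to defeat the boundary-type error terms. Specifically, fix $p\in\mathcal{M}$, $r>0$, and set $M:=C_{n}D(1+r)^{\nu}$ with $C_{n}$ large enough that $\sup_{B(p,r)}|u|<M$ and that the coefficient computed below is positive. Take $G(u):=(M^{2}-u^{2})^{-1}$ and define
\[
H(x):=[r^{2}-d^{2}(x)]^{2}\,|\nabla u|^{2}(x)\,G(u(x)).
\]
A direct calculation gives $\frac{8GG''-(8+n)(G')^{2}}{8G^{2}}=\frac{2M^{2}+\frac{4-n}{2}u^{2}}{(M^{2}-u^{2})^{2}}$, which is strictly positive on $B(p,r)$ by the choice of $M$; so the second inequality of Lemma~\ref{l4.1} with this $G$ provides genuine control on $|\nabla u|^{2}$ at an interior maximum of $H$.

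Before applying the lemma one must adapt it to the variable lower bound ${\rm Ric}^{n,m}_{V}\geq-(n-1)K(1+d^{2})^{\delta/2}$. Its proof uses only the Bochner identity (Lemma~\ref{l2.1}), Kato's inequality, and an upper bound on $\Delta_{V}(d^{2})$. The Bochner step now contributes $-2(n-1)K(1+d^{2})^{\delta/2}|\nabla u|^{2}$; for the distance Laplacian I invoke the Bakry-Qian comparison (Theorem~\ref{t3.1}) applied to the radial function $K(r)=K(1+r^{2})^{\delta/2}$. The assumption $\delta<4$ ensures that the Riccati equation~(\ref{3.1}) admits a global solution on $(0,\infty)$ (no finite-time explosion) and that a direct ODE analysis gives $\theta_{K}(r)\leq(n-1)/r+c_{n}\sqrt{K}(1+r)^{\delta/2}$; hence
\[
\Delta_{V}(d^{2})=2|\nabla d|^{2}+2d\,\Delta_{V}d\leq 2n+c_{n}'\sqrt{K}(1+d)^{\delta/2}d.
\]
Substituting these into the proof of Lemma~\ref{l4.1} yields its variable-curvature analog, in which $K$ is replaced by $K(1+d^{2})^{\delta/2}$ and $\sqrt{K}d$ by $\sqrt{K}(1+d)^{\delta/2}d$.

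Now suppose toward contradiction that $|\nabla u|\not\equiv 0$; then for $r$ sufficiently large $H$ is not identically zero on $B(p,r)$, and since $H\geq 0$ and $H=0$ on $\partial B(p,r)$ it attains a positive maximum at an interior point $x_{0}$, at which (by Calabi's trick) $d$ may be assumed smooth. Then $\nabla\ln H(x_{0})=0$ and $\Delta_{V}\ln H(x_{0})\leq 0$. Substituting into the adapted second inequality and using $F'(u)\geq(n-1)K(1+d^{2})^{\delta/2}$ to cancel the leading curvature term, the cross term $\frac{8d_{0}G'(u_{0})}{(r^{2}-d_{0}^{2})G(u_{0})}|\nabla u|(x_{0})$ is absorbed into the $|\nabla u|^{2}$ term on the left via Young's inequality, leaving
\[
\frac{2M^{2}}{(M^{2}-u_{0}^{2})^{2}}|\nabla u|^{2}(x_{0})\leq\frac{C_{1}}{r^{2}-d_{0}^{2}}+\frac{C_{2}\sqrt{K}(1+d_{0})^{\delta/2}d_{0}}{r^{2}-d_{0}^{2}}+\frac{C_{3}d_{0}^{2}}{(r^{2}-d_{0}^{2})^{2}}.
\]
Multiplying through by $(r^{2}-d_{0}^{2})^{2}(M^{2}-u_{0}^{2})/(2M^{2})$, together with $M^{2}-u_{0}^{2}\leq M^{2}$, produces the a priori bound $H(x_{0})\leq C[r^{2}+\sqrt{K}(1+r)^{2+\delta/2}]$.

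Finally, fix any $y\in\mathcal{M}$ and take $r\geq 2d(y,p)$, so $y\in B(p,r/2)$; then $[r^{2}-d(y)^{2}]^{2}\geq\frac{9}{16}r^{4}$ and $G(u(y))^{-1}\leq M^{2}$. From $H(y)\leq H(x_{0})$ one obtains
\[
|\nabla u|^{2}(y)\leq\frac{16M^{2}H(x_{0})}{9r^{4}}\leq C'\left[\frac{(1+r)^{2\nu}}{r^{2}}+\frac{(1+r)^{2\nu+\delta/2}\sqrt{K}}{r^{2}}\right],
\]
which tends to $0$ as $r\to\infty$ by the hypotheses $\nu<1$ (killing the first term) and $\nu<1-\delta/4$, i.e.\ $2\nu+\delta/2<2$ (killing the second). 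Hence $|\nabla u|(y)=0$; since $y\in\mathcal{M}$ is arbitrary, $u$ is constant. The principal technical obstacle is the variable-curvature Laplacian comparison in Step 2: one must show that the Riccati solution $\theta_{K}$ associated to $K(r)=K(1+r^{2})^{\delta/2}$ is global on $(0,\infty)$ when $\delta<4$ and has the asserted polynomial growth. A secondary and equally delicate point is the sharp absorption at the interior maximum that makes the scale $M\asymp(1+r)^{\nu}$ compatible with the exact constraint $\nu<\min\{1,1-\delta/4\}$ stated in the theorem.
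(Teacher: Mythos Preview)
The paper does not actually prove Theorem~\ref{t4.2}; after Lemma~\ref{l4.1} it simply writes ``As a consequence we have'' and states the theorem, deferring entirely to the method of \cite{Mastrolia10, Mastrolia-Rigoli10}. Your proposal is precisely that method (the choice $G(u)=(M^{2}-u^{2})^{-1}$ with $M\asymp(1+r)^{\nu}$ is the standard one there), so in that sense you are doing exactly what the paper intends.

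Two points deserve attention. First, your explanation of the role of $\delta<4$ is misplaced: for a \emph{lower} Ricci bound the comparison Riccati equation $\dot\theta=K(r)-\theta^{2}$ with $K(r)=K(1+r^{2})^{\delta/2}\geq0$ never blows up to $-\infty$, regardless of $\delta$; the solution is global and behaves like $\sqrt{K(r)}$ at infinity. The condition $\delta<4$ enters only at the very end, to make the interval $(0,1-\delta/4)$ for $\nu$ nonempty.

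Second, and more seriously, your asserted bound $H(x_{0})\leq C[r^{2}+\sqrt{K}(1+r)^{2+\delta/2}]$ looks off by one power of $r$. After the absorption you describe, the term coming from $\Delta_{V}(d^{2})$ contributes $\sqrt{K}(1+r)^{\delta/2}d_{0}(r^{2}-d_{0}^{2})/c$, and $d_{0}(r^{2}-d_{0}^{2})\leq Cr^{3}$ on $[0,r]$, not $Cr^{2}$. This yields $H(x_{0})\leq C[r^{2}+\sqrt{K}(1+r)^{3+\delta/2}]$, and then
\[
|\nabla u|^{2}(y)\;\lesssim\;\frac{(1+r)^{2\nu}\bigl[r^{2}+(1+r)^{3+\delta/2}\bigr]}{r^{4}}
\;\sim\; r^{2\nu-2}+r^{\,2\nu-1+\delta/2},
\]
which forces $\nu<\tfrac12-\tfrac{\delta}{4}$ rather than the paper's $\nu<1-\tfrac{\delta}{4}$. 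Since the paper gives no details, either the stated range is not sharp, or recovering the full range requires a refinement present in \cite{Mastrolia-Rigoli10} that your sketch omits (for instance a different power of the cut-off or a sharper treatment of the $\sqrt{K_r}\,d$ term). You should check your exponent bookkeeping against the original reference before claiming the full range.
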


\section{Li-Yau-Hamilton estimates}\label{section5}

In this section we consider the following parabolic equation
\begin{equation}
\left(\Delta_{V}-q-\partial_{t}\right)u=au\ln u\label{5.1}
\end{equation}
on $\mathcal{M}\times(0,T]$, where $a$ is a constant and $q\in C^{2}(\mathcal{M}
\times(0,T])$. When $V\equiv0$ or $V$ is gradient, this equation was considered 
in \cite{Wu10, ZhuLi13}. Suppose that $u$ is a positive solution of (\ref{5.1}) and consider
\begin{equation}
f:=\ln u.\label{5.2}
\end{equation}
Then (\ref{5.1}) can be rewritten as
\begin{equation}
\left(\Delta_{V}-\partial_{t}\right)f=-|\nabla f|^{2}+q+af.\label{5.3}
\end{equation}

\begin{lemma}\label{l5.1} Let $(\mathcal{M},g)$ be a complete $m$-dimensional Riemannian
manifold with ${\rm Ric}^{n,m}_{V}\geq-K$, where $K$ is a nonnegative function
on $\mathcal{M}$. If $f$ is a solution of (\ref{5.3}), then the quantity
\begin{equation}
F:=t(|\nabla f|^{2}-\alpha f_{t}-\alpha q-\alpha af), \ \ \ \alpha\geq1\label{5.4}
\end{equation}
satisfies
\begin{eqnarray*}
(\Delta_{V}-\partial_{t})F&\geq&-2\langle\nabla f,\nabla F\rangle
-\frac{F}{t}-2Kt|\nabla f|^{2}
+\frac{2t}{n}\left(|\nabla f|^{2}-q-f_{t}-af\right)^{2}\\
&&- \ \alpha t\Delta_{V}g-2(\alpha-1)t\langle\nabla f,
\nabla q\rangle
-2(\alpha-1)ta|\nabla f|^{2}\\
&&+ \ \alpha at\left(|\nabla f|^{2}-q-f_{t}
-af\right).
\end{eqnarray*}
\end{lemma}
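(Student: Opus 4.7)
The plan is to differentiate $F$ along the operator $\Delta_V-\partial_t$ and reduce to the $V$-Bochner-Weitzenb\"ock estimate (\ref{2.2}). I would first abbreviate $G:=|\nabla f|^2-\alpha f_t-\alpha q-\alpha af$, so that $F=tG$ and
\begin{equation*}
(\Delta_V-\partial_t)F = t(\Delta_V-\partial_t)G - G = t(\Delta_V-\partial_t)G - \tfrac{F}{t}.
\end{equation*}
This already accounts for the $-F/t$ term in the conclusion. The rest is the term-by-term computation of $(\Delta_V-\partial_t)G$.

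For the $|\nabla f|^2$ piece I would apply (\ref{2.2}), together with $\partial_t|\nabla f|^2=2\langle\nabla f_t,\nabla f\rangle$ and the hypothesis ${\rm Ric}^{n,m}_V\geq -K$, to obtain
\begin{equation*}
(\Delta_V-\partial_t)|\nabla f|^2 \geq \tfrac{2}{n}(\Delta_V f)^2 - 2K|\nabla f|^2 + 2\langle\nabla(\Delta_V f - f_t),\nabla f\rangle.
\end{equation*}
Substituting (\ref{5.3}) in the form $\Delta_V f-f_t=-|\nabla f|^2+q+af$, and $\Delta_V f = f_t - |\nabla f|^2 + q + af$, converts the squared term into $\tfrac{2}{n}(|\nabla f|^2-q-f_t-af)^2$ and the last inner product into $-2\langle\nabla|\nabla f|^2,\nabla f\rangle+2\langle\nabla q,\nabla f\rangle+2a|\nabla f|^2$.

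For the remaining pieces of $G$, differentiating (\ref{5.3}) once in $t$ gives $(\Delta_V-\partial_t)f_t=-2\langle\nabla f,\nabla f_t\rangle+q_t+af_t$; the contribution of $q$ is simply $\Delta_V q-q_t$; and for the last term one uses (\ref{5.3}) directly to write $(\Delta_V-\partial_t)(af)=a(-|\nabla f|^2+q+af)$. The two $\alpha q_t$ contributions from the $f_t$ and $q$ pieces cancel, which is a helpful sanity check.

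The final step is the algebraic repackaging. Using $\nabla G=\nabla|\nabla f|^2-\alpha\nabla f_t-\alpha\nabla q-\alpha a\nabla f$, the collected gradient pairings combine into $-2\langle\nabla f,\nabla G\rangle$ together with an excess of $-2(\alpha-1)\langle\nabla f,\nabla q\rangle$; multiplying by $t$ turns $-2\langle\nabla f,\nabla G\rangle$ into $-2\langle\nabla f,\nabla F\rangle$. Similarly, collecting the coefficients of $a|\nabla f|^2$, $af_t$, $aq$, $a^2 f$ from the four pieces produces $(2-\alpha)at|\nabla f|^2-\alpha a tf_t-\alpha atq-\alpha a^2tf$, which one rewrites as $\alpha at(|\nabla f|^2-q-f_t-af)-2(\alpha-1)at|\nabla f|^2$, yielding exactly the two $a$-dependent terms in the statement. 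The main obstacle is this final bookkeeping: ensuring that all the $a$-coefficients collapse into the single clean expression $\alpha at(|\nabla f|^2-q-f_t-af)$ with the indicated $-2(\alpha-1)ta|\nabla f|^2$ correction. There is no analytic difficulty beyond one application of (\ref{2.2}) and one time-differentiation of (\ref{5.3}).
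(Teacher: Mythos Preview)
Your proof is correct and follows essentially the same route as the paper: both arguments reduce to the $V$-Bochner inequality (\ref{2.2}) applied to $|\nabla f|^2$, together with the identity $\Delta_V f - f_t = -|\nabla f|^2 + q + af$ from (\ref{5.3}) and its $t$-derivative, followed by the same algebraic regrouping of gradient and $a$-terms. The only difference is organizational---you work with the combined operator $(\Delta_V-\partial_t)$ on $G=F/t$ from the outset, whereas the paper expands $\Delta_V F$ and $F_t$ separately before combining---but the substance is identical.
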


\begin{proof} By the linearity, we have
\begin{equation*}
\Delta_{V}F=t\Delta_{V}|\nabla f|^{2}-\alpha t\Delta_{V}f_{t}
-\alpha t\Delta_{V}g-\alpha at\Delta_{V}f.
\end{equation*}
Using Lemma \ref{l2.1}, together with
\begin{equation*}
\Delta_{V}f=-|\nabla f|^{2}+q+f_{t}+af=-\frac{F}{t}
-(\alpha-1)(q+f_{t}+af),
\end{equation*}
we arrive at
\begin{eqnarray*}
\Delta_{V}F&\geq&\frac{2t}{n}\left(|\nabla f|^{2}-q-f_{t}-af\right)^{2}-2t\left\langle\nabla f,\nabla\left(\frac{F}{t}+(\alpha-1)(q+f_{t}+af)\right)
\right\rangle\\
&&- \ 2Kt|\nabla f|^{2}-t\alpha\left(-\frac{F}{t}
-(\alpha-1)(q+f_{t}+af)\right)_{t}
-\alpha t\Delta_{V}g-\alpha at\Delta_{V}f\\
&=&\frac{2t}{n}\left(|\nabla f|^{2}-q-f_{t}
-af\right)^{2}-2\langle\nabla f,
\nabla F\rangle-2(\alpha-1)t\langle\nabla f,\nabla f_{t}\rangle\\
&&- \ 2(\alpha-1)t\langle\nabla f,\nabla q\rangle
-2(\alpha-1)ta|\nabla f|^{2}
-2Kt|\nabla f|^{2}+\alpha F_{t}\\
&&- \ \alpha\left(|\nabla f|^{2}-\alpha f_{t}-\alpha q-\alpha af\right)
+\alpha(\alpha-1)t q_{t}
+\alpha(\alpha-1)t f_{tt}\\
&&+ \ \alpha(\alpha-1)ta f_{t}-\alpha t\Delta_{V} q-\alpha at\Delta_{V} f.
\end{eqnarray*}
On the other hand,
\begin{equation*}
F_{t}=|\nabla f|^{2}-\alpha f_{t}-\alpha q
-\alpha af+t\left(\partial_{t}|\nabla f|^{2}
-\alpha f_{tt}-\alpha q_{t}-\alpha af_{t}\right).
\end{equation*}
This implies the result.
\end{proof}

\begin{theorem}\label{t5.2} Let $(\mathcal{M},g)$ be a compact $m$-dimensional Riemannian
manifold with ${\rm Ric}^{n,m}_{V}\geq0$. Suppose that the boundary $\partial\mathcal{M}$ of $\mathcal{M}$ is convex whenever $\partial\mathcal{M}\neq\emptyset$. Let $u$ be a positive solution of
\begin{equation*}
\left(\Delta_{V}-\partial_{t}\right)u=au\ln u
\end{equation*}
on $\mathcal{M}\times(0,T]$ for some constant $a$, with Neumann boundary condition $\frac{\partial u}{\partial\nu}=0$ on $\partial\mathcal{M}\times(0,T]$.
\begin{itemize}

\item[(1)] If $q\leq0$ then
\begin{equation*}
\frac{|\nabla u|^{2}}{u^{2}}
-\frac{u_{t}}{u}-a\ln u\leq\frac{n}{2t}-\frac{na}{2}
\end{equation*}
on $\mathcal{M}\times(0,T]$.

\item[(2)] If $a\geq0$ then
\begin{equation*}
\frac{|\nabla u|^{2}}{u^{2}}
-\frac{u_{t}}u-a\ln u\leq\frac{n}{2t}.
\end{equation*}

\end{itemize}

\end{theorem}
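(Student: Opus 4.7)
The plan is to apply the parabolic maximum principle to the auxiliary function $F(x,t):=t\bigl(|\nabla f|^{2}-f_{t}-af\bigr)$ with $f:=\ln u$, taking $\alpha=1$. Since $q\equiv 0$, ${\rm Ric}^{n,m}_{V}\geq 0$ (so $K=0$), and $\alpha=1$, Lemma \ref{l5.1} collapses, after using the identity $|\nabla f|^{2}-f_{t}-af=F/t$, to the clean differential inequality
\begin{equation*}
(\Delta_{V}-\partial_{t})F\;\geq\;-2\langle\nabla f,\nabla F\rangle-\frac{F}{t}+\frac{2F^{2}}{nt}+aF.
\end{equation*}

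Next, fix $T_{0}\in(0,T]$ and let $(x_{0},t_{0})$ be a maximum point of $F$ on $\mathcal{M}\times[0,T_{0}]$. If $t_{0}=0$ or $F(x_{0},t_{0})\leq 0$, the conclusion is trivial, so assume $t_{0}>0$ and $F(x_{0},t_{0})>0$. At an interior maximum one has $\nabla F=0$, $\Delta_{V}F\leq 0$ and $F_{t}\geq 0$, so $(\Delta_{V}-\partial_{t})F(x_{0},t_{0})\leq 0$. Inserting this into the inequality above and dividing by $F(x_{0},t_{0})$ yields the key pointwise bound
\begin{equation*}
F(x_{0},t_{0})\;\leq\;\tfrac{n}{2}(1-at_{0}).
\end{equation*}

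If $\partial\mathcal{M}\neq\emptyset$, the possibility of a boundary maximum must be excluded. The Neumann condition $u_{\nu}=0$ transfers to $f_{\nu}=0$ and $(f_{t})_{\nu}=0$, while a standard computation using the second fundamental form $\mathrm{II}$ of $\partial\mathcal{M}$ (which is non-negative by convexity) gives $\partial_{\nu}|\nabla f|^{2}=-2\,\mathrm{II}(\nabla f,\nabla f)\leq 0$ at points where $f_{\nu}=0$. Hence $F_{\nu}\leq 0$ at a boundary maximum, while Hopf's lemma would demand $F_{\nu}\geq 0$; this forces $F_{\nu}=0$, and the interior argument applies.

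Finally, convert the bound at $(x_{0},t_{0})$ into the two claimed estimates. For part (2) with $a\geq 0$, one has $1-at_{0}\leq 1$, so $F\leq n/2$ on $\mathcal{M}\times[0,T]$; dividing by $t$ gives the stated inequality. For part (1) with $a\leq 0$, apply the argument for each fixed $t\in(0,T]$ with the choice $T_{0}=t$: since $s\mapsto 1-as$ is non-decreasing, $(n/2)(1-at_{0})\leq(n/2)(1-at)$, and dividing by $t$ yields the desired bound. The principal obstacle is the boundary analysis---extracting the correct sign of $\partial_{\nu}|\nabla f|^{2}$ from the convexity of $\partial\mathcal{M}$ together with the Neumann condition---while the interior step is a direct specialisation of Lemma \ref{l5.1}.
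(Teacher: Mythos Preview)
Your proposal is correct and follows essentially the same route as the paper: the same auxiliary function $F=t(|\nabla f|^{2}-f_{t}-af)$, the same specialization of Lemma~\ref{l5.1} (with $q=0$, $K=0$, $\alpha=1$) to obtain $(\Delta_{V}-\partial_{t})F\geq -2\langle\nabla f,\nabla F\rangle+\tfrac{2F}{nt}\bigl(F-\tfrac{n}{2}+\tfrac{ant}{2}\bigr)$, the same interior maximum-principle argument, and the same use of convexity of $\partial\mathcal{M}$ together with the Neumann condition to compute $F_{\nu}=-2t\,\mathrm{II}(\nabla f,\nabla f)\leq 0$ at a boundary maximum. One small tightening: at a genuine boundary maximum the parabolic Hopf lemma gives the \emph{strict} inequality $F_{\nu}>0$, which directly contradicts $F_{\nu}\leq 0$ and rules out the boundary case outright---you do not need (and cannot use) the step ``$F_{\nu}=0$, so the interior argument applies,'' since at a boundary point $F_{\nu}=0$ alone does not supply $\Delta_{V}F\leq 0$.
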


\begin{proof} From Lemma \ref{l5.1} we obtain
\begin{eqnarray*}
\left(\Delta_{V}-\partial_{t}\right)
F&\geq&-2\langle\nabla f,\nabla F\rangle
-\frac{F}{t}+\frac{2t}{n}(|\nabla f|^{2}
-f_{t}-af)^{2}+at(|\nabla f|^{2}-f_{t}-af)\\
&=&-2\langle\nabla f,\nabla F\rangle-\frac{F}{t}
+\frac{2F^{2}}{nt}+aF\\
&=&-2\langle\nabla f,\nabla F\rangle
+\frac{2F}{nt}\left(F-\frac{n}{2}+\frac{ant}{2}\right)
\end{eqnarray*}
where $F=t(|\nabla f|^{2}-f_{t}-af)$ and $f=\ln u$.

Now the proof is similar to that in \cite{Li-Yau86, ZhuLi13}. For convenience, we
give some detail here. Firstly we assume $a\leq0$. In this case we claim
that $F\leq\frac{n}{2}-\frac{ant}{2}$ on $\mathcal{M}
\times(0,T]$. Otherwise
\begin{equation*}
F(x_{0},t_{0})=\sup_{\mathcal{M}\times(0,T]}
F>\frac{n}{2}-\frac{ant}{2}\geq\frac{n}{2}>0
\end{equation*}
for some point $(x_{0},t_{0})\in\mathcal{M}\times(0,T]$, hence $t_{0}>0$. If
$x_{0}$ is an interior point of $\mathcal{M}$, then $\Delta F(x_{0},t_{0})
\leq\nabla F(x_{0},t_{0})=0\leq F_{t}(x_{0},t_{0})$. Consequently,
\begin{equation*}
\Delta_{V}F(x_{0},t_{0})=\Delta F(x_{0},t_{0})+\langle V,\nabla F\rangle(x_{0},
t_{0})\leq0.
\end{equation*}
At the point $(x_{0},t_{0})$ we get
\begin{equation*}
0\geq\frac{2F}{nt}\left(F-\frac{n}{2}+\frac{ant}{2}\right)
\end{equation*}
from which $F(x_{0},t_{0})\leq\frac{n}{2}-\frac{ant_{0}}{2}$. This contradiction
implies that $F\leq\frac{n}{2}-\frac{ant}{2}$ on $\mathcal{M}\times(0,T]$. Next we consider the case that $x_{0}$ is on the boundary of $\mathcal{M}$. The strong maximum principle shows
that $\frac{\partial F}{\partial\nu}(x_{0},t_{0})>0$. Choose an orthonormal
basis $(e_{i})_{1\leq i\leq m}$ for $T\mathcal{M}$, where
$e_{m}:=\partial/\partial\nu$. Compute
\begin{equation*}
F_{\nu}=2t\sum_{1\leq j\leq m-1}f_{j}f_{j\nu}
+2tf_{\nu}f_{\nu\nu}
-f_{t\nu}-af_{\nu}.
\end{equation*}
Since $u_{\nu}=0$ on $\partial\mathcal{M}$, it follows that $f_{\nu}=0$ on $\partial\mathcal{M}$ and hence
\begin{equation*}
F_{\nu}=2\sum_{1\leq j\leq m-1}f_{j}f_{j\nu}
=-2t\sum_{1\leq j,k\leq m-1}
h_{jk}f_{j}f_{k}=-2t{\rm II}(\nabla f,\nabla f)
\end{equation*}
because $f_{j\nu}=-\sum_{1\leq k\leq m-1}h_{jk}f_{k}$, where $h_{jk}$ are components of the second fundamental form {\rm II} of $\partial\mathcal{M}$. Consequently ${\rm II}
(\nabla f,\nabla f)(x_{0},t_{0})<0$ which contradicts the convexity
of $\partial\mathcal{M}$. Hence $F\leq\frac{n}{2}-\frac{ant}{2}$.

We now consider the rest case $a\geq0$.
Since $n/2t>0$, we may assume that $F\geq0$. In this case we obtain
\begin{equation*}
\left(\Delta_{V}-\partial_{t}\right)
F\geq-2\langle\nabla f,\nabla F\rangle
+\frac{2F}{nt}\left(F-\frac{n}{2}\right)
\end{equation*}
which reduces to \cite{Li-Yau86} and by the same computation we can conclude
that $F\leq n/2$.
\end{proof}

\begin{theorem}\label{t5.3} Let $(\mathcal{M},g)$ be a complete manifold with boundary $\partial\mathcal{M}$. Assume that $p\in\mathcal{M}$ and the geodesic ball $B(p,2R)$ does not intersect
$\partial\mathcal{M}$. We denote by $-K(2R)$ with $K(2R)\geq0$, a lower bound
of ${\rm Ric}^{n,m}_{V}$ on the ball $B(p,2R)$. Let $q$ be a function defined on $\mathcal{M}\times[0,T]$ which is $C^{2}$ in the $x$ variable and $C^{1}$ in the $t$ variable. Assume that
\begin{equation*}
\Delta_{V}q\leq\theta(2R), \ \ \ |\nabla q|\leq\gamma(2R)
\end{equation*}
on $B(p,2R)\times[0,T]$ for some constants $\theta(2R)$ and $\gamma(2R)$. If $u$ is a positive solution of the equation
\begin{equation*}
\left(\Delta_{V}-q-\partial_{t}\right)u=au\ln u
\end{equation*}
on $\mathcal{M}\times(0,T]$ for some constant $a$, then for any $\alpha>1$ and
$\epsilon\in(0,1)$, on $B(p,R)$, $u$ satisfies the following estimates:
\begin{itemize}

\item[(1)] for $a\geq0$, we have
\begin{eqnarray*}
|\nabla f|^{2}-\alpha f_{t}-\alpha q-\alpha af&\leq&\frac{n\alpha^{2}}{2(1-\epsilon)t}
+\frac{(A+\gamma)n\alpha^{2}}{2(1-\epsilon)}+\frac{n^{2}\beta^{4}C^{2}_{1}}{4\epsilon
(1-\epsilon)(\beta-1)R^{2}}\\
&&+ \ \frac{n\alpha^{2}[K+a(\alpha-1)]}{(1-\epsilon)(\alpha-1)}
+\left(\frac{[\alpha\theta+(\alpha-1)\gamma]n\alpha^{2}}{2(1-\epsilon)}
\right)^{1/2}.
\end{eqnarray*}

\item[(2)] for $a\leq0$, we have
\begin{eqnarray*}
|\nabla f|^{2}-\alpha f_{t}-\alpha q-\alpha af&\leq&\frac{n\alpha^{2}}{2(1-\epsilon)t}
+\frac{(A+\gamma)n\alpha^{2}}{2(1-\epsilon)}+\frac{n^{2}\beta^{4}C^{2}_{1}}{4\epsilon
(1-\epsilon)(\beta-1)R^{2}}\\
&&+ \ \frac{n\alpha^{2}[K-\frac{a}{2}a(\alpha-1)]}{(1-\epsilon)(\alpha-1)}
+\left(\frac{[\alpha\theta+(\alpha-1)\gamma]n\alpha^{2}}{2(1-\epsilon)}
\right)^{1/2}.
\end{eqnarray*}

\end{itemize}
Here $f:=\ln u$ and $A=[2C^{2}_{1}+(n-1)C^{2}_{1}(1+R\sqrt{K})
+C_{2}]/R^{2}$ for some positive constants $C_{1}, C_{2}$.
\end{theorem}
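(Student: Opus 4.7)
The plan is to localize the global argument of Theorem \ref{t5.2} by multiplying the quantity $F$ from Lemma \ref{l5.1} by a suitable spatial cutoff function $\eta$ supported on $B(p,2R)$, and then apply the maximum principle to $\eta F$ on $B(p,2R)\times[0,T]$. Concretely, I would choose $\eta(x)=\phi(d(x,p)/R)$ where $\phi\in C^{2}([0,\infty))$ satisfies $\phi\equiv 1$ on $[0,1]$, $\phi\equiv 0$ on $[2,\infty)$, $\phi'\le 0$, together with the standard Li-Yau bounds $|\phi'|^{2}/\phi\le C_{1}$ and $\phi''\ge -C_{2}$. A parameter $\beta>1$ (appearing in the statement) can be introduced by replacing $\phi$ by $\phi^{\beta}$ when estimating $|\nabla\eta|^{2}/\eta$, which is the usual way to obtain the factor $\beta^{4}/(\beta-1)$ in the denominator.

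The second step is to control $\Delta_{V}\eta$ on $B(p,2R)\setminus\mathrm{cut}(p)$. Using the chain rule and Bakry-Qian's Laplacian comparison in the form of Corollary \ref{c3.3},
\[
\Delta_{V}\eta=\phi''\frac{|\nabla d|^{2}}{R^{2}}+\phi'\frac{\Delta_{V}d}{R}\ge -\frac{C_{2}}{R^{2}}-\frac{C_{1}}{R}\Bigl(\frac{n-1}{d}+(n-1)\sqrt{K}\Bigr),
\]
which, together with the bound on $|\nabla\eta|^{2}/\eta$, yields $\Delta_{V}\eta\ge -A$ on the support of $\phi'$, where $A=[2C_{1}^{2}+(n-1)C_{1}^{2}(1+R\sqrt{K})+C_{2}]/R^{2}$ as in the statement. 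Calabi's trick handles the cut locus issue in the usual way.

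The third step is the maximum principle. Assume $\eta F$ attains its maximum at $(x_{0},t_{0})\in B(p,2R)\times(0,T]$ (if $t_{0}=0$ or $\eta F\le 0$ there is nothing to prove). At that point $\nabla(\eta F)=0$, $\Delta_{V}(\eta F)\le 0$, and $(\eta F)_{t}\ge 0$, so
\[
0\ge\eta\,\Delta_{V}F+F\Delta_{V}\eta-2\frac{|\nabla\eta|^{2}}{\eta}F-\eta F_{t}.
\]
Substitute the lower bound for $(\Delta_{V}-\partial_{t})F$ from Lemma \ref{l5.1} (with $K$ replaced by $K(2R)$ and the assumed bounds on $\Delta_{V}q$ and $|\nabla q|$), rewrite the transport term $-2\langle\nabla f,\nabla F\rangle$ at the critical point using $\nabla F=-(F/\eta)\nabla\eta$, and absorb the resulting $|\nabla f|\,F$ term into $F^{2}/(nt)$ via Young's inequality with parameter $\epsilon\in(0,1)$. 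The cross-term $2(\alpha-1)t\langle\nabla f,\nabla q\rangle$ is handled by Cauchy-Schwarz and the bound $|\nabla q|\le\gamma$, while $-\alpha t\Delta_{V}q$ is dominated by $\alpha t\theta$; both produce the square-root correction $([\alpha\theta+(\alpha-1)\gamma]n\alpha^{2}/2(1-\epsilon))^{1/2}$ visible in the conclusion. Finally, the term $\alpha at(|\nabla f|^{2}-q-f_{t}-af)=aF$ requires splitting into the cases $a\ge 0$ and $a\le 0$, since in the first case one uses $aF\ge 0$ while in the second one estimates using $|\nabla f|^{2}\le F/t+\alpha(q+f_{t}+af)$, which is exactly what produces the change from $a(\alpha-1)$ to $-\frac{a}{2}(\alpha-1)$ in the two conclusions.

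After these substitutions, the inequality at $(x_{0},t_{0})$ becomes a quadratic inequality in $\eta F$ of the schematic form
\[
\frac{2(\eta F)^{2}}{n\alpha^{2}t}\le (\eta F)\Bigl(\frac{1}{t}+A+\gamma+\tfrac{n\beta^{4}C_{1}^{2}}{\epsilon(\beta-1)R^{2}}\cdot\tfrac{1}{\eta F}\cdot(\cdots)\Bigr)+\text{lower-order},
\]
which upon dividing by $\eta F$ and solving yields an upper bound for $\eta F(x_{0},t_{0})$, hence for $\eta F$ everywhere on $B(p,2R)\times[0,T]$. Restricting to $B(p,R)$, where $\eta\equiv 1$, and discarding the factor $t$ on the left gives the asserted bound on $|\nabla f|^{2}-\alpha f_{t}-\alpha q-\alpha af$. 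The main obstacle will be the bookkeeping of the many small terms: carefully choosing the Young parameters so that the quadratic term $F^{2}/(nt)$ dominates the cutoff error $|\nabla\eta|^{2}F/\eta^{2}$ and the transport error simultaneously, and isolating the $a$-dependent contribution so that the two sign cases produce exactly the stated expressions. No step is individually difficult, but reconciling all constants with the precise form claimed requires care.
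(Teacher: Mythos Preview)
Your proposal follows essentially the same route as the paper: define $F$ as in Lemma \ref{l5.1}, multiply by a cutoff $\varphi=\tilde\varphi(d/R)$ with the standard Li--Yau bounds, invoke the Bakry--Qian comparison (Corollary \ref{c3.3}) to control $\Delta_V\varphi$, apply the maximum principle to $\varphi F$ at an interior maximum $(x_0,t_0)$, and reduce to a quadratic inequality in $G:=\varphi F$.

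Two small points of divergence. First, the paper does not introduce a separate exponent $\beta$ on the cutoff; the ``$\beta$'' in the statement is visibly a typo for $\alpha$ (compare the term $\tfrac{n^2\alpha^4C_1^2}{4\epsilon(1-\epsilon)(\alpha-1)R^2}$ that actually emerges in the paper's computation), so your $\phi^\beta$ device is unnecessary. Second, rather than absorbing terms one by one with repeated Young inequalities, the paper organizes the algebra at $(x_0,t_0)$ via the substitution $\mu:=|\nabla f|^2/F$, which lets one write $|\nabla f|^2-f_t-q-af=F\bigl(\mu-\tfrac{\mu t_0-1}{\alpha t_0}\bigr)$ and then apply a single Young inequality to the cross term $\tfrac{2C_1t_0}{R}\mu^{1/2}G^{3/2}$; the sign of $a$ enters only through how one estimates $\tfrac{2\varphi t_0^2[K+a(\alpha-1)]\mu}{[1+(\alpha-1)\mu t_0]^2}$ and $\tfrac{a\varphi t_0}{1+(\alpha-1)\mu t_0}$. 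Your identification ``$\alpha a t(|\nabla f|^2-q-f_t-af)=aF$'' is not literally correct (the $\alpha$ weights differ), but the $\mu$-substitution makes the correct bookkeeping transparent.
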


\begin{proof} Set $F:=t(|\nabla f|^{2}-\alpha f_{t}
-\alpha q-\alpha af)$. As in \cite{ChenChen09, Li-Yau86, Ma06, SchoenYau94,
ZhuLi13}, we choose a smooth function $\tilde{\varphi}(r)$ defined on $[0,\infty)$ such that
\begin{equation*}
\tilde{\varphi}(r)=\left\{\begin{array}{cc}
1, & r\in[0,1],\\
0, & r\in[2,\infty),
\end{array}\right.
\end{equation*}
and
\begin{equation*}
-C_{1}\leq\tilde{\varphi}'(r)\varphi^{-1/2}(r)\leq0, \ \ \
\tilde{\varphi}(r)\geq-C_{2}
\end{equation*}
for some positive constants $C_{1}, C_{2}$. Set
\begin{equation*}
\varphi(x):=\tilde{\varphi}\left(\frac{1}{R}d(x)\right)
\end{equation*}
where $r(x)$ denotes the distance function from $p$ to $x$. By Calabi's trick
(see, e.g., \cite{Calabi57, ChengYau75, SchoenYau94}, we may assume that the function $\varphi$ is smooth in the ball $B(p,2R)$. By Corollary 3.3, we obtain
\begin{equation*}
\frac{|\nabla\varphi|^{2}}{\varphi}\leq\frac{C^{2}_{1}}{R^{2}}, \ \ \
\Delta_{V}\varphi\geq-\frac{(n-1)C_{1}(1+R\sqrt{K})+C_{2}}{R^{2}}.
\end{equation*}
Now the proof is similar to that in \cite{ZhuLi13}; we present the detail here
for completeness. From Lemma 5.1, we arrive at
\begin{eqnarray*}
\Delta_{V}(\varphi F)&=&F\Delta_{V}\varphi+2\langle\nabla\varphi,\nabla F\rangle
+\varphi\Delta_{V}F\\
&\geq&-F\left[\frac{2C^{2}_{1}+(n-1)C_{1}(1+R\sqrt{K})+C_{2}}{R^{2}}\right]
+\frac{2}{\varphi}\langle\nabla\varphi,\nabla(\varphi F)\rangle\\
&&+ \ \varphi\bigg[F_{t}-2\langle\nabla f,\nabla F\rangle
-\frac{F}{t}-2Kt|\nabla f|^{2}+\frac{2t}{n}\left(|\nabla f|^{2}
-f_{t}-q-af\right)^{2}\\
&&- \ \alpha t\Delta_{V}q-2(\alpha-1)t\langle\nabla f,\nabla q\rangle
-2(\alpha-1)ta|\nabla f|^{2}\\
&&+ \ \alpha at\left(|\nabla f|^{2}
-f_{t}-q-af\right)\bigg]
\end{eqnarray*}
Fix a time $T'leq T$ and consider a point $(x_{0},t_{0})\in\mathcal{M}
\times[0,T']$ where $\varphi F$ achieves its maximum. Without loss of
generality, we may assume that $(\varphi F)(x_{0},t_{0})>0$ (so that
$t_{0}>0$), otherwise it is clear. Since
\begin{equation*}
\Delta(\varphi F)(x_{0},t_{0})\leq 0=\nabla(\varphi F)(x_{0},t_{0})
\leq(\varphi F)_{t}(x_{0},t_{0}),
\end{equation*}
it follows that
\begin{equation*}
\Delta_{V}(\varphi F)(x_{0},t_{0})=\Delta(\varphi F)(x_{0},t_{0})
+\langle V,\nabla(\varphi F)\rangle(x_{0},t_{0})\leq0.
\end{equation*}
Letting
\begin{equation*}
A:=\frac{2C^{2}_{1}+(n-1)C_{1}(1+R\sqrt{K})+C_{2}}{R^{2}}
\end{equation*}
and noting that $\varphi\nabla F=-F\nabla\varphi$ at the point $(x_{0},t_{0})$, we obtain
\begin{eqnarray*}
0&\geq&-AF+2F\langle\nabla f,\nabla\varphi\rangle
-\frac{\varphi F}{t_{0}}-2Kt_{0}\varphi|\nabla f|^{2}+\frac{2t_{0}}{n}\varphi
\left(|\nabla f|^{2}-f_{t}-q-af\right)^{2}\\
&&- \ \alpha t_{0}\varphi\Delta_{V}q
-2(\alpha-1)t_{0}\varphi\langle\nabla f,\nabla q\rangle
-2(\alpha-1)t_{0}a\varphi|\nabla f|^{2}\\
&&+ \ \alpha at_{0}\varphi\left(|\nabla f|^{2}-f_{t}-q-af\right)
\end{eqnarray*}
at the point $(x_{0},t_{0})$. As in \cite{ChenChen09, Yang08, ZhuLi13}, set
\begin{equation*}
\mu:=\frac{|\nabla f|^{2}}{F}(x_{0},t_{0})\geq0.
\end{equation*}
Then
\begin{eqnarray*}
|\nabla f|^{2}-f_{t}-q-af&=&F\mu+\frac{1}{\alpha}\left(\frac{F}{t_{0}}
-|\nabla f|^{2}\right)\\
&=&F\mu+\frac{F}{t_{0}\alpha}-\frac{\mu F}{\alpha} \ \ = \ \
F\left(\mu-\frac{\mu t_{0}-1}{\alpha t_{0}}\right)\\
\langle\nabla f,\nabla\varphi\rangle&\leq&|\nabla f||\nabla\varphi| \ \
\leq \ \ \frac{C_{1}}{R}\varphi^{1/2}|\nabla f|
\end{eqnarray*}
at the point $(x_{0},t_{0})$. Setting $G:=\varphi F$ and using the
above inequalities we arrive at
\begin{eqnarray*}
A t_{0}G&\geq&-\frac{2C_{1}t_{0}}{R}\mu^{1/2}G^{3/2}
-\varphi G+\frac{2}{n\alpha^{2}}[1+(\alpha-1)\mu t_{0}]^{2}G^{2}\\
&&- \ 2\varphi t^{2}_{0}[K+a(\alpha-1)]\mu G
+a\varphi t_{0}[1+(\alpha-1)\mu t_{0}]G\\
&&- \ \alpha(\varphi t_{0})^{2}\theta
-2(\alpha-1)t^{2}_{0}\varphi^{3/2}\gamma\mu^{1/2}G^{1/2}
\end{eqnarray*}
at the point $(x_{0},t_{0})$. For any $\epsilon\in(0,1)$, we have the following
elementary inequality
\begin{equation*}
\frac{2C_{1}t_{0}}{R}\mu^{1/2}G^{3/2}
\leq\frac{2\epsilon}{n\alpha^{2}}
[1+(\alpha-1)\mu t_{0}]^{2}G^{2}
+\frac{n\alpha^{2}C^{2}_{1}t^{2}_{0}\mu G}{2\epsilon R^{2}[1+(\alpha-1)\mu t_{0}]^{2}},
\end{equation*}
which, together with $2\mu^{1/2}G^{1/2}\leq 1+\mu G$, implies that
\begin{eqnarray*}
\frac{2(1-\epsilon)[1+(\alpha-1)\mu t_{0}]^{2}G^{2}}{n\alpha^{2}}
&\leq&\bigg[A t_{0}+\varphi+\frac{n\alpha^{2}C^{2}_{1}t^{2}_{0}\mu}{
2\epsilon R^{2}[1+(\alpha-1)\mu t_{0}]^{2}}\\
&&+ \ 2\varphi t^{2}_{0}[K+a(\alpha-1)]\mu
-a\varphi t_{0}[1+(\alpha-1)\mu t_{0}]\\
&&+ \ (\alpha-1)t^{2}_{0}\varphi^{3/2}\gamma\mu\bigg]G\\
&&+ \ [\alpha\varphi^{2}\theta
+(\alpha-1)\varphi^{3/2}\gamma]t^{2}_{0}
\end{eqnarray*}
at the point $(x_{0},t_{0})$. Note that $0\leq\varphi\leq1$ and $1+(\alpha-1)
\mu t_{0}\geq1$. Therefore the above inequality reduces to the following
\begin{eqnarray*}
\frac{2(1-\epsilon)G^{2}}{n\alpha^{2}}
&\leq&\bigg[At_{0}+1+\frac{n\alpha^{2}C^{2}_{1}t_{0}}{2\epsilon R^{2}(\alpha-1)}
+\frac{2\varphi t_{0}[K+a(\alpha-1)]\mu t_{0}}{[1+(\alpha-1)\mu t_{0}]^{2}}\\
&&- \ \frac{a\varphi t_{0}}{1+(\alpha-1)\mu t_{0}}
+\gamma t_{0}\bigg]G+[\alpha\theta+(\alpha-1)\gamma]t^{2}_{0}
\end{eqnarray*}
at the point $(x_{0},t_{0})$. Now the desired result follows by using the fact that
\begin{equation*}
x\leq\frac{aq}{2}+\sqrt{b+\left(\frac{a}{2}\right)^{2}}
\leq\frac{a}{2}+\sqrt{b}+\frac{a}{2}=a+\sqrt{b}
\end{equation*}
whenever $x^{2}\leq ax+b$ for some $a,b,x\geq0$. For example, when
$a\leq0$, we obtain
\begin{eqnarray*}
G^{2}&\leq&\bigg[\frac{An\alpha^{2}t_{0}}{2(1-\epsilon)}
+\frac{n\alpha^{2}}{2(1-\epsilon)}
+\frac{n^{2}\alpha^{4}C^{2}_{1}t_{0}}{4\epsilon(1-\epsilon)R^{2}
(\alpha-1)}+\frac{n\alpha^{2}[K+a(\alpha-1)]t_{0}}{(1-\epsilon)(\alpha-1)}\\
&&+ \ \frac{n\alpha^{2}\gamma t_{0}}{2(1-\epsilon)}
\bigg]G+\frac{[\alpha\theta+(\alpha-1)\gamma]n\alpha^{2}t^{2}_{0}}{2(1-
\epsilon)}
\end{eqnarray*}
at the point $(x_{0},t_{0})$, which yields an upper bound for $G$ given by
\begin{eqnarray*}
G&\leq&\left[\frac{(A+\gamma)n\alpha^{2}}{2(1-\epsilon)}
+\frac{n^{2}\alpha^{4}C^{2}_{1}}{4\epsilon(1-\epsilon)(\alpha-1)
R^{2}}+\frac{n\alpha^{2}[K+a(\alpha-1)]}{(1-\epsilon)(\alpha-1)}
\right]T'\\
&&+ \ \left(\frac{[\alpha\theta+(\alpha-1)\gamma]n\alpha^{2}}{2(1-
\epsilon)}\right)^{1/2}T'+\frac{n\alpha^{2}}{2(1-\epsilon)}
\end{eqnarray*}
at the point $(x_{0},t_{0})$. By the construction of $\varphi$, we have
$F\leq G(x_{0},t_{0})$ on $B(p,R)\times[0,T']$. Since $T'$ was arbitrary,
it proves (1). Similarly, one can get the desired result in (2).
\end{proof}

\begin{corollary}\label{c5.4} If $(\mathcal{M},g)$ is a complete noncompact
Riemannian manifold without boundary and ${\rm Ric}^{n,m}_{V}\geq-K$
on $\mathcal{M}$, then any positive solution $u$ of the equation
\begin{equation*}
\partial_{t}u=\Delta_{V}u
\end{equation*}
on $\mathcal{M}\times(0,T]$ satisfies
\begin{equation}
\frac{|\nabla u|^{2}}{u^{2}}-\alpha\frac{u_{t}}{u}
\leq\frac{n\alpha^{2}K}{\alpha-1}+\frac{n\alpha^{2}}{2t}\label{5.5}
\end{equation}
for any $\alpha>1$.
\end{corollary}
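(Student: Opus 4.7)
\noindent\textbf{Proof plan for Corollary \ref{c5.4}.}
The plan is to specialize the local Li--Yau estimate in Theorem \ref{t5.3} to the heat equation ($q\equiv 0$, $a=0$) on an arbitrarily large geodesic ball and then let the radius tend to infinity. First, since $\partial\mathcal{M}=\emptyset$, for every $p\in\mathcal{M}$ and every $R>0$ the ball $B(p,2R)$ vacuously fails to meet $\partial\mathcal{M}$; since the curvature bound ${\rm Ric}^{n,m}_V\ge -K$ is global we may take $K(2R)=K$ uniformly in $R$; and since $q\equiv 0$ we may take $\theta(2R)=0$ and $\gamma(2R)=0$. With $a=0\ge 0$ we are in case (1) of Theorem \ref{t5.3}.

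Applying Theorem \ref{t5.3}(1) with $f=\ln u$ then gives, on $B(p,R)$,
\begin{equation*}
\frac{|\nabla u|^2}{u^2}-\alpha\frac{u_t}{u}
\le\frac{n\alpha^2}{2(1-\epsilon)t}+\frac{An\alpha^2}{2(1-\epsilon)}
+\frac{n^2\beta^4 C_1^2}{4\epsilon(1-\epsilon)(\beta-1)R^2}
+\frac{n\alpha^2 K}{(1-\epsilon)(\alpha-1)},
\end{equation*}
where $A=[2C_1^2+(n-1)C_1^2(1+R\sqrt{K})+C_2]/R^2$. Next I would fix $x\in\mathcal{M}$, take $R>d(p,x)$ so that $x\in B(p,R)$, and let $R\to\infty$. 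Since $A=O(\sqrt{K}/R)$ and the explicit $1/R^2$-term decays, both the $A$-term and the $\beta$-term vanish in the limit, yielding
\begin{equation*}
\frac{|\nabla u|^2}{u^2}-\alpha\frac{u_t}{u}
\le\frac{n\alpha^2}{2(1-\epsilon)t}+\frac{n\alpha^2 K}{(1-\epsilon)(\alpha-1)}.
\end{equation*}
Finally, letting $\epsilon\to 0^+$ absorbs the $(1-\epsilon)^{-1}$ factors and produces the desired inequality (\ref{5.5}).

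The main thing to check is only the decay of the $R$-dependent constants on the right-hand side: the only place where $K$ enters nontrivially is through the contribution $(n-1)C_1^2(1+R\sqrt{K})/R^2$ to $A$, which is still $O(\sqrt{K}/R)$ and thus harmless. No delicate interchange of limits is required, since for each fixed $\epsilon\in(0,1)$ the bound obtained after $R\to\infty$ is already independent of $R$, after which one sends $\epsilon\to 0^+$. In particular, the corollary follows as a direct limiting consequence of Theorem \ref{t5.3} with no further analytic input beyond the explicit form of the constants there.
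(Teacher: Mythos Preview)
Your argument is correct and is precisely the intended derivation: the paper does not spell out a separate proof of Corollary \ref{c5.4}, but simply records it as the $q\equiv 0$, $a=0$ specialization of Theorem \ref{t5.3}, obtained by letting $R\to\infty$ (so that the $A$-term and the $1/R^{2}$-term vanish) and then $\epsilon\to 0^{+}$. Your check that $A=O(1/R)$ as $R\to\infty$ and the order of limits are exactly the points one needs to verify.
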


\begin{remark}\label{r5.5} As pointed in \cite{SchoenYau94}, the estimate (5.5) still
holds for any closed Riemannian manifold with ${\rm Ric}^{n,m}_{V}\geq-K$.
\end{remark}

Next we derive Hamilton's Harnack inequality for
weighted heat equation. Let $u$ be a positive solution of $\partial_{t}u=
\Delta_{V}u$.

\begin{lemma}\label{l5.6} Suppose $(\mathcal{M},g)$ is a compact Riemannian manifold. We have
\begin{equation}
\left(\partial_{t}-\Delta_{V}\right)
\frac{|\nabla u|^{2}}{u}
=-\frac{2}{u}\left[\left|\nabla^{2}u-\frac{1}{u}\nabla u
\otimes\nabla u\right|^{2}+{\rm Ric}_{V}(\nabla u,\nabla u)\right].\label{5.6}
\end{equation}
\end{lemma}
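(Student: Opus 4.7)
The plan is to compute $(\partial_t - \Delta_V)(|\nabla u|^2/u)$ directly, exploiting the $V$-Bochner-Weitzenb\"ock identity (\ref{2.1}) established in Lemma \ref{l2.1}, and then recognize the right-hand side as a completed square.

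First I would write $|\nabla u|^2/u = |\nabla u|^2 \cdot (1/u)$ and apply the Leibniz rule for $\Delta_V$,
\begin{equation*}
\Delta_V(fg) = f\Delta_V g + g\Delta_V f + 2\langle \nabla f, \nabla g\rangle,
\end{equation*}
together with the elementary identities $\nabla(1/u) = -\nabla u/u^2$ and $\Delta_V(1/u) = -\Delta_V u/u^2 + 2|\nabla u|^2/u^3$. This yields
\begin{equation*}
\Delta_V\!\left(\frac{|\nabla u|^2}{u}\right) = \frac{\Delta_V|\nabla u|^2}{u} - \frac{|\nabla u|^2\Delta_V u}{u^2} + \frac{2|\nabla u|^4}{u^3} - \frac{2\langle \nabla|\nabla u|^2, \nabla u\rangle}{u^2}.
\end{equation*}

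Next I would compute the time derivative, using $u_t = \Delta_V u$:
\begin{equation*}
\partial_t\!\left(\frac{|\nabla u|^2}{u}\right) = \frac{2\langle \nabla u, \nabla \Delta_V u\rangle}{u} - \frac{|\nabla u|^2\Delta_V u}{u^2}.
\end{equation*}
Subtracting, the terms involving $|\nabla u|^2\Delta_V u/u^2$ cancel, and I get
\begin{equation*}
(\partial_t - \Delta_V)\!\left(\frac{|\nabla u|^2}{u}\right) = \frac{2\langle \nabla u, \nabla\Delta_V u\rangle - \Delta_V|\nabla u|^2}{u} - \frac{2|\nabla u|^4}{u^3} + \frac{2\langle \nabla|\nabla u|^2,\nabla u\rangle}{u^2}.
\end{equation*}

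Now I invoke the $V$-Bochner-Weitzenb\"ock identity (\ref{2.1}), which gives
\begin{equation*}
2\langle\nabla u,\nabla\Delta_V u\rangle - \Delta_V|\nabla u|^2 = -2|\nabla^2 u|^2 - 2\,{\rm Ric}_V(\nabla u,\nabla u).
\end{equation*}
Substituting, and using $\langle \nabla|\nabla u|^2,\nabla u\rangle = 2\nabla^2 u(\nabla u,\nabla u)$, the right-hand side becomes
\begin{equation*}
-\frac{2}{u}\left[|\nabla^2 u|^2 + {\rm Ric}_V(\nabla u,\nabla u)\right] - \frac{2|\nabla u|^4}{u^3} + \frac{4\nabla^2 u(\nabla u,\nabla u)}{u^2}.
\end{equation*}

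The final step is algebraic: expand the square
\begin{equation*}
\left|\nabla^2 u - \frac{1}{u}\nabla u\otimes\nabla u\right|^2 = |\nabla^2 u|^2 - \frac{2}{u}\nabla^2 u(\nabla u,\nabla u) + \frac{|\nabla u|^4}{u^2},
\end{equation*}
and verify termwise that the expression above equals $-\frac{2}{u}\bigl[|\nabla^2 u - u^{-1}\nabla u\otimes\nabla u|^2 + {\rm Ric}_V(\nabla u,\nabla u)\bigr]$. Each of the three non-Ricci pieces matches immediately. The main obstacle is really just clean bookkeeping of the Leibniz expansions and the sign conventions when converting $\nabla(1/u)$ and $\Delta_V(1/u)$; once those are in hand, the calculation is entirely mechanical and the completion-of-square step at the end is automatic.
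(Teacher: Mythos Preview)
Your proof is correct and follows essentially the same direct-computation approach as the paper. The only organizational difference is that you invoke the already-established $V$-Bochner identity (\ref{2.1}) to handle $2\langle\nabla u,\nabla\Delta_V u\rangle - \Delta_V|\nabla u|^2$, whereas the paper re-derives the commutation formula $\nabla_i\Delta_V u = \Delta_V\nabla_i u - R_{ij}\nabla^j u + \nabla_iV_j\nabla^j u$ inline; this makes your version slightly more economical but is not a substantively different argument.
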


When $V\equiv0$ this identity is due to the classical result proved by Hamilton \cite{Hamilton93}. Li \cite{Li13} generalized this identity to the Witten Laplacian
$L=\Delta_{V}$, where $V=-\nabla\phi$ for some $C^{2}$-function $\phi$ on $\mathcal{M}$.

\begin{proof} As in \cite{Hamilton93, Li13}, we directly compute the evolution equation for $|\nabla u|^{2}/u$ as follows. Since $\partial_{t}u=\Delta_{V}u$, it follows that
\begin{eqnarray*}
\partial_{t}\left(\frac{|\nabla u|^{2}}{u}\right)
&=&\frac{\partial_{t}|\nabla u|^{2}}{u}-\frac{|\nabla u|^{2}}{u^{2}}
\partial_{t}u\\
&=&\frac{2}{u}\langle\nabla u,\nabla\partial_{t}u\rangle
-\frac{|\nabla u|^{2}}{u^{2}}\Delta_{V}u\\
&=&\frac{2}{u}\langle\nabla u,\nabla\Delta_{V}u\rangle-\frac{|\nabla u|^{2}}{u^{2}}
\Delta_{V}u.
\end{eqnarray*}
By the commutative formula $\nabla_{i}\Delta u=\Delta\nabla_{i}u
-R_{ij}\nabla^{j}u$ we obtain
\begin{eqnarray*}
\nabla_{i}\Delta_{V}u&=&\nabla_{i}\Delta u+\nabla_{i}(V^{j}\nabla_{j}u)\\
&=&\Delta\nabla_{i}u-R_{ij}\nabla^{j}u+V^{j}\nabla_{i}\nabla_{j}u
+\nabla_{i}V_{j}\nabla^{j}u\\
&=&\Delta_{V}\nabla_{i}u-R_{ij}\nabla^{j}u+\nabla_{i}V_{j}\nabla^{j}u.
\end{eqnarray*}
Plugging this into $\partial_{t}(|\nabla u|^{2}/u)$ yields
\begin{equation*}
\partial_{t}\left(\frac{|\nabla u|^{2}}{u}\right)
=\frac{2}{u}\left[\langle\nabla u,\Delta_{V}\nabla u\rangle
-{\rm Ric}(\nabla u,\nabla u)
+\nabla_{i}V_{j}\nabla^{i}u\nabla^{j}u\right]-\frac{|\nabla u|^{2}}{u^{2}}
\Delta_{V}u.
\end{equation*}
Because the term $\nabla^{i}V_{j}\nabla^{i}u\nabla_{j}$ is symmetric in
the indices $i,j$, we can rewrite it as
\begin{equation*}
\nabla_{i}V_{j}\nabla^{i}u\nabla^{j}u=\frac{1}{2}(\nabla_{i}V_{j}
+\nabla_{j}V_{i})\nabla^{i}u\nabla^{j}u=\frac{1}{2}\mathscr{L}_{V}g(\nabla u,
\nabla u).
\end{equation*}
Consequently
\begin{equation*}
\partial_{t}\left(\frac{|\nabla u|^{2}}{u}\right)
=\frac{2}{u}\langle\nabla u,\Delta_{V}\nabla u\rangle-\frac{2}{u}
{\rm Ric}_{V}(\nabla u,\nabla u)-\frac{|\nabla u|^{2}}{u^{2}}\Delta_{V}u.
\end{equation*}
Similarly, we compute
\begin{equation*}
\Delta_{V}\left(\frac{|\nabla u|^{2}}{u}\right)
=\frac{\Delta_{V}|\nabla u|^{2}}{u}+|\nabla u|^{2}\Delta_{V}(u^{-1})
+2\langle\nabla(u^{-1}),\nabla|\nabla u|^{2}\rangle.
\end{equation*}
Using
\begin{eqnarray*}
\Delta_{V}(u^{-1})&=&\Delta(u^{-1})
+\langle V,\nabla(u^{-1})\rangle\\
&=&-\frac{\Delta u}{u^{2}}
-\frac{2|\nabla u|^{2}}{u^{3}}
-\frac{\langle V,\nabla u\rangle}{u^{2}}\\
&=&-\frac{1}{u^{2}}\Delta_{V}u+\frac{2|\nabla u|^{2}}{u^{3}},\\
2\langle\nabla(u^{-1}),\nabla|\nabla u|^{2}\rangle
&=&-\frac{2}{u^{2}}\langle\nabla u,\nabla|\nabla u|^{2}\rangle\\
&=&-\frac{4}{u^{2}}\nabla_{i}\nabla_{j}u
\nabla^{i}u\nabla^{j}u,
\end{eqnarray*}
we get
\begin{equation*}
\Delta_{V}\left(\frac{|\nabla u|^{2}}{u}\right)
=\frac{2}{u}\langle\nabla u,\Delta_{V}\nabla u\rangle
+\frac{2}{u}|\nabla^{2}u|^{2}-\frac{|\nabla u|^{2}}{u^{2}}
\Delta_{V}u+\frac{2|\nabla u|^{4}}{u^{3}}
-\frac{4}{u^{2}}\nabla_{i}\nabla_{j}u
\nabla^{i}u\nabla^{j}u
\end{equation*}
which, together with $\partial_{t}(|\nabla u|^{2}/u)$, implies
\begin{eqnarray*}
\left(\partial_{t}-\Delta_{V}\right)\left(\frac{|\nabla u|^{2}}{u}\right)
&=&-\frac{2}{u}{\rm Ric}_{V}(\nabla u,\nabla u)\\
&&- \ \frac{2}{u}\left[|\nabla^{2}u|^{2}
+\frac{|\nabla u|^{4}}{u^{2}}-\frac{2}{u}\nabla_{i}\nabla_{j}u
\nabla^{i}u\nabla^{j}u\right].
\end{eqnarray*}
Squaring the last term on the right-hand side we obtain the desired identity.
\end{proof}

\begin{theorem}\label{t5.7} Suppose that $(\mathcal{M},g)$ is a compact Riemannian manifold with ${\rm Ric}_{V}\geq-K$ where $K\geq0$. If $u$ is a
solution of $\partial_{t}u=\Delta_{V}u$ with $0<u\leq A$ on $\mathcal{M}\times(0,T]$, then
\begin{equation}
\frac{|\nabla u|^{2}}{u^{2}}\leq
\left(\frac{2K}{e^{2Kt}-1}+2K\right)
\ln\frac{A}{u}\leq\left(\frac{1}{t}+2K\right)\ln\frac{A}{u}\label{5.7}
\end{equation}
on $\mathcal{M}\times(0,T]$.
\end{theorem}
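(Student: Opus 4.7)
The plan is to run the parabolic maximum principle on a carefully weighted combination of the two quantities whose evolution equations are easy to control, namely $|\nabla u|^{2}/u$ (from Lemma~\ref{l5.6}) and $u\ln(A/u)$.

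First I would rewrite Lemma~\ref{l5.6} as an inequality using ${\rm Ric}_{V}\geq -K$ and discarding the nonnegative Hessian-defect term:
\begin{equation*}
\left(\partial_{t}-\Delta_{V}\right)\frac{|\nabla u|^{2}}{u}\leq 2K\cdot\frac{|\nabla u|^{2}}{u}.
\end{equation*}
Next, a direct calculation using $\partial_{t}u=\Delta_{V}u$ and the product rule shows
\begin{equation*}
\left(\partial_{t}-\Delta_{V}\right)\!\bigl(u\ln(A/u)\bigr)=\frac{|\nabla u|^{2}}{u},
\end{equation*}
since the factor $\ln(A/u)-1$ multiplying $(\partial_{t}-\Delta_{V})u$ drops out and the only surviving piece is $-\Delta_{V}u\cdot(\ln(A/u)-1)-|\nabla u|^{2}/u + $ the time derivative, which telescope to $|\nabla u|^{2}/u$.

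The next step is to choose the time-weight $\sigma(t):=(1-e^{-2Kt})/(2K)$ (with the obvious limit $\sigma(t)=t$ when $K=0$), which satisfies $\sigma(0)=0$ and the ODE $\sigma'+2K\sigma=1$, and to define the auxiliary function
\begin{equation*}
P(x,t):=\sigma(t)\,\frac{|\nabla u|^{2}}{u}-u\ln\!\frac{A}{u}.
\end{equation*}
Combining the two displayed identities,
\begin{equation*}
\left(\partial_{t}-\Delta_{V}\right)P\leq\bigl[\sigma'(t)+2K\sigma(t)-1\bigr]\cdot\frac{|\nabla u|^{2}}{u}=0,
\end{equation*}
while at $t=0$, $P(x,0)=-u(x,0)\ln(A/u(x,0))\leq 0$ because $0<u\leq A$.

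Now I would apply the parabolic maximum principle on the compact manifold $\mathcal{M}\times[0,T]$: replacing $P$ by $P-\varepsilon t$ for small $\varepsilon>0$ to rule out an interior maximum (at any interior max one has $(\partial_{t}-\Delta_{V})(P-\varepsilon t)\geq 0$, contradicting $\leq -\varepsilon<0$), and then sending $\varepsilon\to 0$, gives $P\leq 0$ on $\mathcal{M}\times(0,T]$. Rearranging,
\begin{equation*}
\frac{|\nabla u|^{2}}{u^{2}}\leq\frac{1}{\sigma(t)}\ln\!\frac{A}{u}=\frac{2Ke^{2Kt}}{e^{2Kt}-1}\ln\!\frac{A}{u}=\left(\frac{2K}{e^{2Kt}-1}+2K\right)\ln\!\frac{A}{u},
\end{equation*}
which is the first inequality; the second then follows from the elementary bound $e^{2Kt}-1\geq 2Kt$, so that $2K/(e^{2Kt}-1)\leq 1/t$.

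The step I expect to require the most care is the choice of $\sigma$: it must satisfy $\sigma'+2K\sigma=1$ with $\sigma(0)=0$ so that both $P\leq 0$ at $t=0$ and the coefficient of $|\nabla u|^{2}/u$ in $(\partial_{t}-\Delta_{V})P$ vanishes after using the curvature bound. With this choice, everything else is bookkeeping, and the argument passes seamlessly to the $K=0$ limit (where $\sigma(t)=t$, recovering Hamilton's original estimate).
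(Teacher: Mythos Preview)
Your proposal is correct and follows essentially the same approach as the paper: both use Lemma~\ref{l5.6} together with the computation $(\partial_{t}-\Delta_{V})(u\ln(A/u))=|\nabla u|^{2}/u$, form the same auxiliary function with a time-weight satisfying $\varphi'+2K\varphi\leq 1$ and $\varphi(0)=0$, and apply the parabolic maximum principle. The only cosmetic difference is that the paper also remarks one may take the sub-optimal choice $\varphi(t)=t/(1+2Kt)$ directly, whereas you use the sharp $\sigma(t)=(1-e^{-2Kt})/(2K)$ and then pass to the weaker bound via $e^{2Kt}-1\geq 2Kt$.
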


\begin{proof} It follows from the above lemma that
\begin{equation*}
\left(\partial_{t}-\Delta_{V}\right)\left(\frac{|\nabla u|^{2}}{u}\right)
\leq\frac{2K}{u}|\nabla u|^{2}.
\end{equation*}
On the other hand, we claim that
\begin{equation*}
\left(\partial_{t}-\Delta_{V}\right)\left(u\ln\frac{A}{u}\right)
=\frac{|\nabla u|^{2}}{u}.
\end{equation*}
In fact,
\begin{eqnarray*}
\partial_{t}\left(u\ln\frac{A}{u}\right)
&=&\ln\frac{A}{u}\partial_{t}u-u\frac{\partial_{t}u}{u} \ \ = \ \ \Delta_{V}u\left(\ln\frac{A}{u}-1\right),\\
\Delta_{V}\left(u\ln\frac{A}{u}\right)&=&
\ln\frac{A}{u}\Delta_{V}u+u\Delta_{V}(\ln A-\ln u)
+2\left\langle\nabla u,-\frac{\nabla u}{u}\right\rangle\\
&=&\ln\frac{A}{u}\Delta_{V}u-u\Delta_{V}\ln u
-2\frac{|\nabla u|^{2}}{u}\\
&=&\ln\frac{A}{u}\Delta_{V}u-u\left(
\frac{\Delta_{V}u}{u}
-\frac{|\nabla u|^{2}}{u^{2}}\right)
-2\frac{|\nabla u|^{2}}{u}\\
&=&\Delta_{V}u\left(\ln\frac{A}{u}-1\right)-\frac{|\nabla u|^{2}}{u}.
\end{eqnarray*}
Choose a time-depending function $\varphi$ with $\varphi(0)=0$ and consider
\begin{equation*}
F:=\varphi\frac{|\nabla u|^{2}}{u}-u\ln\frac{A}{u}.
\end{equation*}
Therefore $F$ satisfies the following inequality
\begin{equation*}
\left(\partial_{t}-\Delta_{V}\right)F\leq(\varphi'+2K\varphi
-1)\frac{|\nabla u|^{2}}{u}.
\end{equation*}
If $\varphi$ is chosen so that $\varphi'+2K\varphi-1\leq0$, then $\partial_{t}F\leq\Delta_{V}F$ on $\mathcal{M}\times(0,T]$. By a maximum principle (e.g., see
Theorem 4.2 in \cite{ChowKnopf04}), $F\leq0$ on $\mathcal{M}\times(0,T]$ because $F(x,0)\leq0$ for all $x\in\mathcal{M}$. Solving the evolution inequality of $\varphi$ we see that
\begin{equation*}
\varphi(t)\leq\frac{1-e^{-2Kt}}{2K}=\frac{e^{2Kt}-1}{2Ke^{2Kt}}.
\end{equation*}
Since $e^{2Kt}\geq 1+2Kt$, it follows that $\frac{t}{1+2Kt}\leq\frac{e^{2Kt}-1}{2K
e^{2Kt}}$. Hence we may choose $\varphi(t)=\frac{t}{1+2Kt}$.
\end{proof}

As a consequence of Theorem \ref{t5.7}, we generalize a result in \cite{Brighton13, Li13} about the Liouville theorem.

\begin{corollary}\label{c5.8} Suppose that $(\mathcal{M},g)$ is a compact Riemannian
manifold with ${\rm Ric}_{V}\geq-K$ where $K\geq0$. If $u$ is a positive solution
of $\Delta_{V}u=0$ on $\mathcal{M}$ then
\begin{equation}
|\nabla\ln u|^{2}
\leq2K\ln\frac{\sup_{\mathcal{M}}u}{u}\label{5.8}
\end{equation}
In particular if ${\rm Ric}_{V}\geq0$ every bounded solution $u$ satisfying $\Delta_{V}u=0$ must be constant.
\end{corollary}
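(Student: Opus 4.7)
The plan is to deduce Corollary 5.8 directly from Theorem 5.7 by observing that a time-independent solution of the weighted Laplace equation is automatically a stationary solution of the weighted heat equation. Concretely, if $u$ is a positive solution of $\Delta_{V}u=0$ on the compact manifold $\mathcal{M}$, then $u$ (viewed as independent of $t$) trivially satisfies $\partial_{t}u=\Delta_{V}u$ on $\mathcal{M}\times(0,\infty)$, and $u$ is bounded above by $A:=\sup_{\mathcal{M}}u<\infty$ since $\mathcal{M}$ is compact. Applying the first (sharper) inequality of Theorem \ref{t5.7} to this $u$, we obtain
\begin{equation*}
\frac{|\nabla u|^{2}}{u^{2}}\leq\left(\frac{2K}{e^{2Kt}-1}+2K\right)\ln\frac{A}{u}
\end{equation*}
for every $t>0$ and every $x\in\mathcal{M}$.

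The key step is to let $t\to\infty$. Since $K\geq0$, the prefactor $\frac{2K}{e^{2Kt}-1}$ tends to $0$ as $t\to\infty$ (interpreting the limit as $0$ in the limiting case $K=0$ by using the weaker bound $\frac{1}{t}+2K$ from Theorem \ref{t5.7}, whose first term likewise vanishes). Because the left-hand side is independent of $t$, passing to the limit yields
\begin{equation*}
\frac{|\nabla u|^{2}}{u^{2}}\leq 2K\ln\frac{A}{u},
\end{equation*}
which is exactly \eqref{5.8} after rewriting $|\nabla u|^{2}/u^{2}=|\nabla\ln u|^{2}$.

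For the Liouville statement, assume ${\rm Ric}_{V}\geq0$, i.e.\ $K=0$. Then \eqref{5.8} forces $|\nabla\ln u|^{2}\leq0$ at every point, hence $\nabla u\equiv0$ and $u$ is constant. If instead one starts with a bounded (not necessarily positive) solution $v$ of $\Delta_{V}v=0$, one applies the same argument to the positive solution $u:=v-\inf_{\mathcal{M}}v+1$, which again satisfies $\Delta_{V}u=0$ and is bounded, so $u$ and hence $v$ is constant.

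The proof is essentially immediate once Theorem \ref{t5.7} is available, so there is no real obstacle; the only mildly delicate point is the $K\to0$ degeneration of the factor $\frac{2K}{e^{2Kt}-1}$, which is handled cleanly either by L'H\^opital (giving the limit $\frac{1}{t}$) or by using the alternative bound $\frac{1}{t}+2K$ already stated in \eqref{5.7}.
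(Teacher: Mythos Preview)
Your proof is correct and follows essentially the same approach as the paper: view the elliptic solution as a stationary solution of the weighted heat equation, apply Theorem~\ref{t5.7} with $A=\sup_{\mathcal{M}}u$, and let $t\to\infty$; for the Liouville part, shift a bounded solution by a constant to make it positive. The paper uses the shift $u-\inf_{\mathcal{M}}u+\epsilon$ with arbitrary $\epsilon>0$ rather than your fixed $+1$, but this makes no difference when $K=0$.
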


\begin{proof} For any $x\in\mathcal{M}$ and $t>0$, consider the function
$u(x,t):=u(x)$. Then $\partial_{t}u=\Delta_{V}u$. From (\ref{5.7}) we obtain
\begin{equation*}
|\nabla\ln u|^{2}\leq\left(\frac{2K}{e^{2Kt}-1}+2K\right)
\ln\frac{\sup_{\mathcal{M}}u}{u};
\end{equation*}
letting $t\to\infty$ implies that
$|\nabla \ln u|^{2}\leq 2K\ln(\sup_{\mathcal{M}}u/u)$.

In general, let $u$ be any bounded solution of $\Delta_{V}u=0$. For any given positive number $\epsilon>0$, replacing $u$ by $u-\inf_{\mathcal{M}}u+\epsilon$
in (\ref{5.8}) we arrive at
\begin{equation*}
\left|\nabla\ln\left(u-\inf_{\mathcal{M}}u+\epsilon\right)\right|^{2}
\leq2K\ln\frac{\sup_{\mathcal{M}}u-\inf_{\mathcal{M}}u+\epsilon}{u
-\inf_{\mathcal{M}}u+\epsilon}.
\end{equation*}
When $K=0$, this inequality shows that $|\nabla\ln(u-\inf_{\mathcal{M}}u
+\epsilon)|^{2}=0$ on $\mathcal{M}$ which means that $u-\inf_{\mathcal{M}}
u+\epsilon$
is a constant $C_{\epsilon}$. Thus $u$ must be $\inf_{\mathcal{M}}u$ a constant.
\end{proof}

Setting $V\equiv0$ in Theorem \ref{t5.7}, we obtain the classical result of
Hamilton \cite{Hamilton93}. Later Kotschwar \cite{Kotschwar07} extended Hamilton's gradient estimate to complete noncompact Riemannian manifold. Li \cite{Li13} proved Hamilton's
gradient estimate for $\Delta_{V}$ where $V=-\nabla\phi$, both in
compact case
and noncompact case. A local version of Hamilton's estimate was proved
by Souplet and Zhang \cite{SoupletZhang06} for $\Delta$, while by Arnaudon, Thalmaier, and Wang \cite{ATW09} for the general operator $\Delta_{V}$. A probabilistic proof
of Hamilton's estimates for $\Delta$ and $\Delta_{V}$ with $V=-\nabla\phi$
can be found in \cite{AT10, Li13}. In this paper we give a geometric proof
of Hamilton's estimate for Witten's Laplacian, following the method in \cite{Kotschwar07} together with Karp-Li-Grigor'yan maximum principle for complete manifolds. In an unpublished
paper \cite{KarpLi83}, Karp and Li established a maximum principle for complete
manifolds (see also \cite{Kotschwar07, LiP12, NiTam04}), which was
independently found by Grigor'yan \cite{Grigoryan87} with a slightly
weaker condition. Actually, Grigor'yan proved this type of maximum principle
for complete weighted manifolds \cite{Grigoryan87, Grigoryan09}.

\begin{theorem}\label{t5.9} {\bf (Karp-Li-Grigor'yan)} Let $(\mathcal{M},g,e^{f}dV)$ be a complete weighted manifold, and let $u(x,t)$ be a solution of
\begin{equation*}
\partial_{t}u\leq\Delta_{f}u \ \ \text{in} \ \mathcal{M}\times(0,T], \ \ \
u(\cdot,0)\leq0.
\end{equation*}
Assume that for some $x_{0}\in\mathcal{M}$ and for all $r>0$,
\begin{equation*}
\int^{T}_{0}\int_{B(x_{0},r)}
u^{2}_{+}(x,t)e^{f(x)}dV(x)dt\leq e^{\alpha(r)}
\end{equation*}
where $u_{+}:=\max\{u,0\}$ and $\alpha(r)$ is a positive increasing function on $(0,\infty)$ such that
\begin{equation*}
\int^{\infty}_{0}\frac{r}{\alpha(r)}dr=\infty.
\end{equation*}
Then $u\leq0$ on $\mathcal{M}\times(0,T]$.
\end{theorem}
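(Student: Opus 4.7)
The plan is to prove this Karp-Li-Grigor'yan maximum principle via weighted $L^{2}$ energy estimates combined with a heat-kernel-type spacetime weight, following the classical strategy for parabolic uniqueness on complete manifolds.

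First, I would establish a weighted Caccioppoli-type energy inequality. For any nonnegative Lipschitz spacetime cutoff $\phi(x,t)$ of compact support in $\mathcal{M}\times[0,T]$, multiply the differential inequality $\partial_{t}u\leq\Delta_{f}u$ by $u_{+}\phi^{2}$ and integrate against the weighted measure $e^{f}dV$. Using the self-adjoint integration by parts
\begin{equation*}
\int_{\mathcal{M}}(\Delta_{f}v)\,w\,e^{f}dV=-\int_{\mathcal{M}}\langle\nabla v,\nabla w\rangle\,e^{f}dV,
\end{equation*}
which is legitimate because of compact support, together with Young's inequality to absorb the $\nabla u_{+}$ cross term, one obtains
\begin{equation*}
\frac{1}{2}\frac{d}{dt}\int u_{+}^{2}\phi^{2}e^{f}dV+\frac{1}{2}\int|\nabla u_{+}|^{2}\phi^{2}e^{f}dV\leq 2\int u_{+}^{2}|\nabla\phi|^{2}e^{f}dV+\int u_{+}^{2}\phi\,\partial_{t}\phi\,e^{f}dV.
\end{equation*}

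Second, I would specialize to $\phi(x,t)=\eta_{R}(x)\,\psi(x,t)$, where $\eta_{R}$ is a standard Lipschitz spatial cutoff equal to $1$ on $B(x_{0},R)$ and vanishing outside $B(x_{0},2R)$ with $|\nabla\eta_{R}|\leq C/R$, and $\psi(x,t):=\exp\bigl(-d(x,x_{0})^{2}/(c(\tau-t))\bigr)$ is a heat-kernel-like weight on $[0,\tau)$. For $c$ chosen sufficiently large, the inequality $|\nabla d|\leq 1$ a.e.\ gives the pointwise cancellation $|\nabla\psi|^{2}+\psi\,\partial_{t}\psi\leq 0$, which kills the $\psi$-contributions to the right-hand side and leaves only the annular error arising from $|\nabla\eta_{R}|$:
\begin{equation*}
\frac{d}{dt}\int u_{+}^{2}\psi^{2}\eta_{R}^{2}e^{f}dV\leq\frac{C}{R^{2}}\int_{B(x_{0},2R)\setminus B(x_{0},R)}u_{+}^{2}\psi^{2}e^{f}dV.
\end{equation*}
On that annulus one has $\psi^{2}\leq e^{-R^{2}/(c\tau)}$ uniformly for $t\in[0,\tau)$. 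Integrating in time, using $u_{+}(\cdot,0)\equiv 0$ and invoking the growth hypothesis $\int_{0}^{\tau}\!\int_{B(x_{0},2R)}u_{+}^{2}e^{f}dV\,dt\leq e^{\alpha(2R)}$, yields
\begin{equation*}
\int_{\mathcal{M}}u_{+}^{2}(x,t)\,\psi^{2}(x,t)\,\eta_{R}^{2}(x)\,e^{f(x)}\,dV(x)\leq\frac{C}{R^{2}}\exp\!\bigl(\alpha(2R)-R^{2}/(c\tau)\bigr)
\end{equation*}
for every $t\in[0,\tau)$.

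The main obstacle, and the reason for the precise integrability hypothesis, is then to turn this scale-by-scale estimate into $u_{+}\equiv 0$ on all of $\mathcal{M}\times[0,T]$ using only $\int_{0}^{\infty}r/\alpha(r)\,dr=\infty$, which is strictly weaker than the naive sufficient condition $\alpha(r)=O(r^{2})$ (for example $\alpha(r)=r^{2}\log\log r$ is still admissible). The strategy is to partition $[0,T]$ into sub-intervals $[T_{k},T_{k+1}]$ of lengths $\tau_{k}$ chosen just small enough that $\alpha(2R_{k})<R_{k}^{2}/(c\tau_{k})$ for some sequence $R_{k}\to\infty$; a Nagumo/Osgood-type argument shows that the divergence of $\int r/\alpha(r)\,dr$ is precisely what guarantees existence of such a partition with $\sum_{k}\tau_{k}\geq T$. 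On each subinterval one applies the above estimate with $\tau=\tau_{k}$ and lets $R_{k}\to\infty$ to deduce $u_{+}\equiv 0$ on $\mathcal{M}\times[T_{k},T_{k+1}]$, then restarts with the resulting nonpositive datum $u(\cdot,T_{k+1})\leq 0$. Rigorously constructing this partition and verifying that the iteration exhausts $[0,T]$ under the qualitative hypothesis is the most delicate step; everything before it is standard weighted parabolic Caccioppoli.
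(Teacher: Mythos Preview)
The paper does not actually prove this theorem; it simply cites Grigor'yan's book (Theorem 11.9 in \cite{Grigoryan09}) and remarks that the proof there for the equality case carries over unchanged. So there is no in-paper argument to compare against, only the referenced one.

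Your overall strategy---a weighted Caccioppoli inequality combined with a Gaussian spacetime weight $\psi=\exp(-d^{2}/(c(\tau-t)))$ whose derivative terms cancel---is precisely the right framework and is essentially Grigor'yan's method. The gap is in your final iteration. Your scheme partitions $[0,T]$ into subintervals $[T_{k},T_{k+1}]$ and attempts, on each subinterval separately, to conclude $u_{+}\equiv 0$ on all of $\mathcal{M}$ by sending the spatial radius to infinity. But for that step to yield zero you need $\frac{C}{R^{2}}\exp\bigl(\alpha(2R)-R^{2}/(c\tau_{k})\bigr)\to 0$ as $R\to\infty$ for some fixed $\tau_{k}>0$, which forces $\liminf_{R\to\infty}\alpha(R)/R^{2}<\infty$. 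The hypothesis $\int r/\alpha(r)\,dr=\infty$ does \emph{not} imply this: for instance $\alpha(r)=r^{2}\log\log r$ satisfies the integral condition yet $\alpha(r)/r^{2}\to\infty$, so the ``let $R_{k}\to\infty$'' step fails on every subinterval, however short. Your acknowledged ``most delicate step'' is therefore not just delicate but, as formulated, incorrect.

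The iteration in Grigor'yan's proof couples space and time differently. One fixes the target time $t_{0}$ and a small ball $B_{R_{0}}$, then chooses an increasing sequence of radii $R_{0}<R_{1}<\cdots$ and a decreasing sequence of times $t_{0}>t_{1}>\cdots>t_{N}=0$, applying the energy estimate at each step to compare $\int_{B_{R_{k}}}u_{+}^{2}(\cdot,t_{k})$ with $\int_{B_{R_{k+1}}}u_{+}^{2}(\cdot,t_{k+1})$, with increments obeying $t_{k}-t_{k+1}\lesssim (R_{k+1}-R_{k})^{2}/\alpha(R_{k+1})$. Telescoping back to $t_{N}=0$ gives zero because $u_{+}(\cdot,0)=0$, and the divergence of $\int r/\alpha(r)\,dr$ is exactly what guarantees that $\sum_{k}(t_{k}-t_{k+1})$ can be made to reach $t_{0}$ (take $R_{k}=2^{k}R_{0}$ and compare the sum to the integral). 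The conceptual difference from your scheme is that one never concludes $u_{+}\equiv 0$ globally at any intermediate time; one only tracks the energy on a growing sequence of balls while stepping backward to the zero initial datum.
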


The proof can be found in \cite{Grigoryan09}, Theorem 11.9, where the author
proved the result for $\partial_{u}=\Delta_{f}u$ with $u(\cdot,0)=0$, however,
the proof still works for the above setting without any changes.

\begin{theorem}\label{t5.10} Suppose that $(\mathcal{M},g)$ is a complete noncompact Riemannian manifold with ${\rm Ric}^{n,m}_{f}\geq-K$ where $K\geq0$. If $u$ is a solution of $\partial_{t}u=
\Delta_{f}u$ with $0<u\leq A$ on $\mathcal{M}\times(0,T]$, then
\begin{equation}
\frac{|\nabla u|^{2}}{u^{2}}\leq\left(\frac{2K}{e^{2Kt}-1}+2K\right)
\ln\frac{A}{u}\leq\left(\frac{1}{t}+2K\right)\ln\frac{A}{u}\label{5.9}
\end{equation}
on $\mathcal{M}\times(0,T]$.
\end{theorem}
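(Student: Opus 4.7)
The plan is to adapt the compact-case proof of Theorem~\ref{t5.7} to the noncompact setting, substituting the Karp--Li--Grigor'yan maximum principle (Theorem~\ref{t5.9}) for the classical one. Since Lemma~\ref{l5.6} is derived pointwise from Bochner's identity and does not use compactness, it transplants verbatim; combining it with ${\rm Ric}_f \geq {\rm Ric}^{n,m}_f \geq -K$ yields
\begin{equation*}
(\partial_t - \Delta_f)\left(\frac{|\nabla u|^2}{u}\right) \leq \frac{2K}{u}|\nabla u|^2,
\end{equation*}
and the identity $(\partial_t - \Delta_f)(u\ln(A/u)) = |\nabla u|^2/u$ carries over unchanged. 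I would then form the auxiliary function
\begin{equation*}
F := \varphi(t)\frac{|\nabla u|^2}{u} - u\ln\frac{A}{u},
\end{equation*}
where $\varphi$ solves $\varphi' + 2K\varphi = 1$ with $\varphi(0) = 0$, i.e.\ $\varphi(t) = (1 - e^{-2Kt})/(2K)$. A direct computation as in the proof of Theorem~\ref{t5.7} then gives $(\partial_t - \Delta_f)F \leq 0$ on $\mathcal{M}\times(0,T]$, while $F(\cdot, 0) \leq 0$ because $\varphi(0) = 0$ and $u \leq A$.

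The main step is to verify the weighted-$L^2$ growth hypothesis of Theorem~\ref{t5.9} for $F_+$, namely
\begin{equation*}
\int_0^T\!\!\int_{B(x_0, r)} F_+^2 \, e^f \, dV \, dt \leq e^{\alpha(r)}
\end{equation*}
for some positive increasing $\alpha$ with $\int^\infty r/\alpha(r)\, dr = \infty$. The term $u\ln(A/u)$ is uniformly bounded on $(0, A]$ and contributes harmlessly. For the potentially singular term $\varphi(t)|\nabla u|^2/u$, I would invoke the local Li--Yau estimate of Theorem~\ref{t5.3} (with $q = 0$ and $a = 0$) on the doubled ball $B(x_0, 2r)$, combined with the a priori bound $0 < u \leq A$, to obtain a pointwise control of the form $|\nabla u|^2/u^2 \leq C(1/t + P(r, K))$ on $B(x_0, r) \times (0, T]$, where $P$ grows at most polynomially in $r$. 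Since $\varphi(t) \leq \min\{t, (2K)^{-1}\}$, the prefactor absorbs the $1/t$ singularity at $t = 0$, so that $F_+$ is bounded uniformly in $t$ and polynomially in $r$ on $B(x_0, r)$. Combined with the weighted volume estimate $V_f(B(x_0, r)) \leq e^{C(1 + \sqrt{K})r}$ supplied by the Bakry--Qian comparison (Theorem~\ref{t3.1} and Corollary~\ref{c3.3}), this produces $\alpha(r) = O(r)$, which plainly satisfies the divergence condition.

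Applying Theorem~\ref{t5.9} to $F$ then forces $F \leq 0$ on $\mathcal{M}\times(0,T]$, which rearranges to the sharper inequality in \eqref{5.9}; the second, cruder form follows from $\varphi(t) \geq t/(1 + 2Kt)$. The main obstacle is the integrability verification, and in particular handling the $1/t$ blow-up of $|\nabla u|^2/u$ as $t \to 0^+$: the observation that $\varphi(t) \sim t$ exactly cancels this singularity is what makes the auxiliary function $F$ admissible in the Karp--Li--Grigor'yan framework. The remaining bookkeeping (volume growth, local Li--Yau constants) is routine once this cancellation is in place.
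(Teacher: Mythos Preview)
Your overall strategy matches the paper's, but there is a genuine gap in the integrability step. The local Li--Yau estimate of Theorem~\ref{t5.3} (with $q=a=0$) only controls the combination
\[
\frac{|\nabla u|^{2}}{u^{2}}-\alpha\frac{u_{t}}{u}\ \leq\ \frac{C}{t}+P(r,K),
\]
and gives no pointwise bound on $|\nabla u|^{2}/u^{2}$ by itself; the term $\alpha u_{t}/u$ can be arbitrarily large of either sign. So the claimed inequality $|\nabla u|^{2}/u^{2}\leq C(1/t+P(r,K))$ does not follow from Theorem~\ref{t5.3}, and the subsequent ``$\varphi(t)\sim t$ cancels the $1/t$'' argument has nothing to act on. Since $u$ has no positive lower bound, even a correct local Hamilton-type estimate such as Souplet--Zhang would leave an unbounded factor $\ln(A/u)$ in $|\nabla u|^{2}/u^{2}$, so the crude estimate $F_{+}\leq\varphi|\nabla u|^{2}/u$ is not obviously in weighted $L^{2}$ either.

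The paper repairs exactly this point with two extra ingredients you are missing. First, it regularizes by setting $u_{\epsilon}=u+\epsilon$ and works with $F_{\epsilon}=\varphi|\nabla u_{\epsilon}|^{2}/u_{\epsilon}-u_{\epsilon}\ln(A_{\epsilon}/u_{\epsilon})$, so that $(F_{\epsilon})_{+}\leq(\varphi/\epsilon)|\nabla u|^{2}$ with a \emph{harmless} denominator. Second, to bound $\varphi|\nabla u|^{2}$ (no $u$ in the denominator) it proves a separate local estimate (Proposition~\ref{p5.11}) via the auxiliary quantity $G=(aA^{2}+u^{2})|\nabla u|^{2}$ and a cutoff maximum-principle argument, obtaining $\varphi|\nabla u|^{2}\leq C(n,K)\,\dfrac{1+r+r^{2}}{r^{2}}A^{2}$ on $B(x_{0},r)$. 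This, together with the weighted Bishop--Gromov volume bound for ${\rm Ric}^{n,m}_{f}\geq-K$, yields $\alpha(r)$ of at most linear growth and makes Theorem~\ref{t5.9} applicable; one then lets $\epsilon\to0$. Your outline becomes correct once you replace the appeal to Theorem~\ref{t5.3} by these two steps.
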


\begin{proof} We follow the method in \cite{Kotschwar07}. Given any positive
number $\epsilon>0$, consider $u_{\epsilon}:=u+\epsilon$ and
\begin{equation*}
F_{\epsilon}:=\varphi\frac{|\nabla u_{\epsilon}|^{2}}{u_{\epsilon}}
-u_{\epsilon}\ln\frac{A_{\epsilon}}{u_{\epsilon}}
\end{equation*}
where $A_{\epsilon}:=A+\epsilon$, $\varphi(0)=0$, and $\varphi'
+2K\varphi-1\leq0$. Since $\partial_{t}u_{\epsilon}
=\Delta_{f}u_{\epsilon}$, it follows from the computation in Theorem \ref{t5.7} we
have
\begin{equation*}
\left(\partial_{t}-\Delta_{f}\right)F_{\epsilon}\leq0, \ \ \
F_{\epsilon}(\cdot,0)\leq0, \ \ \
(F_{\epsilon})_{+}\leq\frac{\varphi}{\epsilon}|\nabla u_{\epsilon}|^{2}.
\end{equation*}
Let us estimate
\begin{equation*}
\int^{T}_{0}\int_{B(x_{0},r)}\left(\frac{\varphi}{\epsilon}|\nabla
u_{\epsilon}|^{2}\right)^{2}
e^{f}dVdt.
\end{equation*}
As pointed out in the proof of Theorem \ref{t5.7}, we chosen $\varphi(t)=
(1-e^{-2Kt})/2K$. We need the following

\begin{proposition}\label{p5.11} Suppose that $(\mathcal{M},g)$ is a complete noncompact
Riemannian manifold with ${\rm Ric}^{n,m}_{V}\geq-K$ where $K\geq0$. If $u$ is a solution of $\partial_{t}u=\Delta_{V}u$ with $0<u\leq A$ on $\mathcal{M}\times(0,T]$, then
for any $a>2$ we have
\begin{equation}
\varphi|\nabla u|^{2}\leq\frac{(a+1)^{3}A^{2}}{2a(a-2)}
\left\{1+\left(
1-e^{-2Kt}\right)\left[\frac{1}{a}+C\frac{1+(n-1)(1+r\sqrt{K})}{2K r^{2}}\right]
\right\}\label{5.10}
\end{equation}
on $B(x_{0},r)\times[0,T]$ for some positive constant $C$. In particular,
\begin{equation}
\varphi|\nabla u|^{2}\leq\frac{(a+1)^{3}}{2a^{2}(a-2)}\left(a+1-e^{-2Kt}
\right)A^{2}\label{5.11}
\end{equation}
on $\mathcal{M}\times(0,T]$ for any $a>2$.
\end{proposition}

\begin{proof} Compute
\begin{eqnarray*}
\partial_{t}|\nabla u|^{2}&=&2\langle\nabla u,\nabla\Delta_{V}u\rangle\\
&=&2\nabla^{i}u\left(\Delta_{V}\nabla_{i}u-R_{ij}\nabla^{j}u
+\nabla_{i}V^{j}\nabla_{j}u\right)\\
&=&\Delta_{V}|\nabla u|^{2}-2|\nabla^{2}u|^{2}-2{\rm Ric}_{V}(\nabla u,
\nabla u),\\
\partial_{t}u^{2}&=&\Delta_{V}u^{2}-2|\nabla u|^{2}.
\end{eqnarray*}
Consider the quantity
\begin{equation*}
G:=(aA^{2}+u^{2})|\nabla u|^{2}, \ \ \ a>0,
\end{equation*}
which satisfies the following evolution equation
\begin{equation*}
\left(\partial_{t}-\Delta_{V}\right)G=
-2|\nabla u|^{4}-2(aA^{2}+u^{2})\left[|\nabla^{2}u|^{2}
+{\rm Ric}_{V}(\nabla u,\nabla u)\right]-8u\nabla_{i}\nabla_{j}u\nabla^{i}\nabla^{j}u.
\end{equation*}
From the Cauchy inequality, we have $8u\nabla_{i}\nabla_{j}u
\nabla^{i}u\nabla^{j}u\leq\eta|\nabla u|^{4}+\frac{16}{\eta}
u^{2}|\nabla^{2}u|^{2}$ for any $\eta>0$, and hence
\begin{eqnarray*}
\left(\partial_{t}-\Delta_{V}\right)G&\leq&
(\eta-2)|\nabla u|^{4}+\left[\frac{16}{\eta}-2(1+a)\right]
u^{2}|\nabla^{2}u|^{2}+2(1+a)KA^{2}|\nabla u|^{2}\\
&=&\frac{4-2a}{1+a}\left(\frac{G}{aA^{2}+u^{2}}\right)^{2}
+\frac{2(1+a)A^{2}}{aA^{2}+u^{2}}KG\\
&\leq&-2\frac{a-2}{(a+1)^{3}}\frac{G^{2}}{A^{4}}+2\frac{a+1}{a}KG
\end{eqnarray*}
where we chosen $\eta=\frac{8}{1+a}$ in the second step and $a>2$ in the third
step. Here we used a fact that
\begin{equation*}
{\rm Ric}_{V}(\nabla u,\nabla u)={\rm Ric}^{n,m}_{V}(\nabla u,
\nabla u)+\frac{\langle V,\nabla u\rangle^{2}}{n-m}\geq-K.
\end{equation*}
As in the proof of Theorem \ref{t5.3}, we take a smooth function $\chi$ equal to
$1$ on $B(x_{0},r)$ and supported in $B(x_{0},2r)$, satisfying
\begin{equation*}
\frac{|\nabla\chi|^{2}}{\chi}
\leq\frac{C^{2}_{1}}{r^{2}}, \ \ \
\Delta_{V}\chi\geq-\frac{(n-1)C_{1}(1+r\sqrt{K})+C_{2}}{r^{2}}
\end{equation*}
for some positive constants $C_{1}, C_{2}$. Because
\begin{eqnarray*}
\left(\partial_{t}-\Delta_{V}\right)
(\varphi\chi G)&=&\varphi'\chi G+\varphi\chi\left(\partial_{t}-\Delta_{V}\right)G
-\varphi G\Delta_{V}\chi\\
&&- \ 2\varphi\left\langle\frac{\nabla(\varphi\chi G)}{\varphi\chi}
-G\frac{\nabla\chi}{\chi},\nabla\chi\right\rangle,
\end{eqnarray*}
applying the above inequalities to $\varphi\chi G$ yields
\begin{eqnarray*}
\left(\partial_{t}-\Delta_{V}\right)(\varphi\chi G)
&\leq&\left[\varphi'\chi+2\varphi\frac{C^{2}_{1}}{r^{2}}
+\varphi\frac{(n-1)C_{1}(1+r\sqrt{K})+C_{2}}{r^{2}}\right]G\\
&&+\varphi\chi\left[-\frac{2(a-2)}{(a+1)^{3}}
\frac{G^{2}}{A^{4}}+\frac{2(a+1)}{a}KG\right]-2
\left\langle\nabla(\varphi\chi G),\frac{\nabla\chi}{\chi}\right\rangle\\
&=&-\frac{2(a-2)}{(a+1)^{3}A^{4}}\varphi\chi G^{2}-2\left\langle\nabla(\varphi\chi G),\frac{\nabla\chi}{\chi}\right\rangle\\
&&+ \ \bigg[\left(\varphi'
+\frac{2(a+1)}{a}K\varphi\right)\chi\\
&&+ \ \frac{\varphi}{r^{2}}
\left(2C^{2}_{1}+C_{2}+(n-1)C_{1}(1+r\sqrt{K})\right)\bigg]G.
\end{eqnarray*}
Let $(x_{0},t_{0})$ be a point where $\varphi\chi G$ achieves its maximum.
Then
\begin{equation*}
\varphi\chi G\leq\frac{(a+1)^{3}A^{4}}{2(a-2)}
\left[\varphi'+\frac{2(a+1)}{a}K\varphi+\frac{\varphi}{r^{2}}
\left(C_{3}+C_{1}(n-1)(1+r\sqrt{K})\right)\right]
\end{equation*}
at the point $(x_{0},t_{0})$, where $C_{3}:=2C^{2}_{1}+C_{2}$. Locating on $B(x_{0},r)
\times(0,T]$ we derive the desired inequality.
\end{proof}

Using (\ref{5.10}) we obtain
\begin{equation*}
\varphi|\nabla u_{\epsilon}|^{2}\leq C\frac{1+r+r^{2}}{r^{2}}A^{2}
\end{equation*}
for some positive constant $C$ depending only on $n,K$. Therefore
\begin{equation*}
\int^{T}_{0}\int_{B(x_{0},r)}
\left(\frac{\varphi}{\epsilon}|\nabla u_{\epsilon}|^{2}\right)^{2}e^{f}dVdt
\leq\frac{C^{2}T A^{4}}{\epsilon^{2}}\frac{(1+r+r^{2})^{2}}{r^{4}}
\int_{B(x_{0},r)}e^{f}dV=:e^{\alpha(r)}.
\end{equation*}
By the Bishop-Gromov volume comparison theorem for $\Delta_{f}$
(see \cite{Lott03}, or \cite{WeiWylie09}, Theorem 4.1), we see that $\int^{\infty}
rdr/\alpha(r)$ is infinity and hence by Karp-Li-Grigor'yan's maximum principle
we obtain $F_{\epsilon}\leq0$. Letting $\epsilon\to0$ implies (\ref{5.9}).
\end{proof}

\begin{remark}\label{r5.12} We compare other Hamilton's estimates with (\ref{5.9}). In our
geometric proof we require the curvature condition ${\rm Ric}^{n,m}_{f}\geq-K$
in order to use the Bakry-Qian's Laplacian comparison theorem without
any additional requirement on the potential function $f$. If we use the curvature condition ${\rm Ric}_{f}\geq-K$ in our geometric proof, then some conditions on $f$ would be
required (see \cite{ChenJostQiu12, WeiWylie09}). A probabilistic proof
of Li \cite{Li13} shows a similar estimate
\begin{equation*}
\frac{|\nabla u|^{2}}{u^{2}}\leq\left(\frac{2}{t}+2K\right)\ln\frac{A}{u}
\end{equation*}
where $0<u\leq A$ on $\mathcal{M}\times(0,T]$ and ${\rm Ric}_{f}\geq-K$.
\end{remark}

\section{Hessian estimates}\label{section6}

In this section we generalize Hessian estimates of the heat equation
in \cite{HanZhang12} to the $V$-heat equation.

\begin{theorem}\label{t6.1} Let $(\mathcal{M},g)$ be a closed $m$-dimensional
Riemannian manifold with ${\rm Ric}^{n,m}_{V}\geq-K$ where $K\geq0$.
\begin{itemize}

\item[(a)] If $u$ is a solution of $\partial_{t}u=\Delta_{V}u$ in $\mathcal{M}
\times(0,T]$ and $0<u\leq A$, then
\begin{equation}
\nabla^{2}u\leq\left(B+\frac{5}{t}\right)u\left(1+\ln\frac{A}{u}\right)g\label{6.1}
\end{equation}
in $\mathcal{M}\times(0,T]$, where
\begin{equation*}
B=\sqrt{16 m^{\frac{3}{2}}K_{1}\sup_{\mathcal{M}}|V|^{2}+2m K_{2}+3m KK_{2}+
14 m^{\frac{3}{2}}nKK_{1}+100 n^{2}m^{3}
(K_{1}+K_{2})^{2}}
\end{equation*}
with $K_{1}=\max_{\mathcal{M}}(|{\rm Rm}|+|{\rm Ric}_{V}|)$ and $K_{2}
=\max_{\mathcal{M}}|\nabla{\rm Ric}_{V}|$.

\item[(b)] If $u$ is a solution of $\partial_{t}u=\Delta_{V}u$ in $Q_{R,T}(x_{0},t_{0})$ and $0<u\leq A$, then
    \begin{equation}
    \nabla^{2}u\leq C_{1}\left(\frac{1}{T}+\frac{1+R\sqrt{K}}{R^{2}}+B\right)u
    \left(1+\ln\frac{A}{u}\right)^{2}g\label{6.2}
    \end{equation}
    in $Q_{R/2,T/2}(x_{0},t_{0})$, where
    \begin{equation*}
    B=C_{2}m^{5/2}n^{2}\left[K_{1}+K_{2}+\sqrt{(K_{1}+K_{2})K
    +K_{2}+K_{1}\sup_{\mathcal{M}}|V|^{2}}\right]
    \end{equation*}
    and $C_{1}, C_{2}$ are positive universal constants.

\end{itemize}

\end{theorem}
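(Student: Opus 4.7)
The plan is to adapt the Han--Zhang Hessian estimate \cite{HanZhang12} to the drift operator $\Delta_V$, with the Hamilton-type estimate of Theorem \ref{t5.7} providing the crucial control on $|\nabla u|^2/u$. First I would derive an evolution inequality for $\nabla^2 u$. Differentiating $\partial_t u = \Delta_V u$ twice and commuting $\nabla_i\nabla_j$ past $\Delta_V = \Delta + \langle V,\nabla\cdot\rangle$ using the standard Ricci identity together with $\nabla_i(V^k\nabla_k u) = \nabla_i V^k \nabla_k u + V^k \nabla_i\nabla_k u$, I obtain a Bochner-type expression
\begin{equation*}
(\partial_t - \Delta_V)\nabla_i\nabla_j u \;=\; \mathscr{R}_{ij}(\nabla^2 u,\nabla u),
\end{equation*}
where $\mathscr{R}_{ij}$ is linear in the pair $(\nabla^2 u,\nabla u)$ with coefficients bounded pointwise by $K_1$, $K_2$ and $\sup_\mathcal{M}|V|$. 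In particular one encounters the terms $\mathrm{Rm}\ast \nabla^2 u$, $\mathrm{Ric}_V\ast \nabla^2 u$, $\nabla\mathrm{Ric}_V\ast \nabla u$, and $\nabla^2 V\ast \nabla u + \nabla V\ast \nabla^2 u$, all of which should be folded into the constant $\mathcal{K}_V$ appearing in the statement.

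Next, let $\phi(u,t) = \bigl(B + 5/t\bigr)u\bigl(1+\ln(A/u)\bigr)$ and consider the symmetric tensor
\begin{equation*}
P_{ij} \;:=\; \nabla_i\nabla_j u - \phi(u,t)\, g_{ij}.
\end{equation*}
A direct computation of $(\partial_t - \Delta_V)\phi$, using $\partial_t u = \Delta_V u$ and the chain rule, together with the Hamilton bound
\begin{equation*}
\frac{|\nabla u|^2}{u^2} \;\leq\; \left(\tfrac{1}{t}+2K\right)\ln\tfrac{A}{u}
\end{equation*}
from Theorem \ref{t5.7}, produces
\begin{equation*}
(\partial_t - \Delta_V) P_{ij} \;\leq\; \mathscr{R}_{ij}(\nabla^2 u,\nabla u) - (\partial_t - \Delta_V)\phi \cdot g_{ij}.
\end{equation*}
The strategy is now the tensor maximum principle: at a first point $(x_0,t_0)$ where the largest eigenvalue $\lambda_{\max}(P)(x_0,t_0)=0$, picking a unit eigenvector $\xi_0$ and extending it to a parallel vector field along radial geodesics from $x_0$, the scalar $w = P(\xi,\xi)$ satisfies $w(x_0,t_0)=0$, $w\leq 0$ nearby, hence $\partial_t w \geq 0$ and $\Delta_V w \leq 0$ there, whereas the evolution inequality combined with the careful choice of constants in $\phi$ forces $(\partial_t-\Delta_V)w > 0$. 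The size of $B$ is chosen exactly so that the $|\nabla u|^2/u$ contribution from $\mathscr{R}$, which scales like $K_1\sup|V|^2$, is absorbed into $B\cdot u\ln(A/u)$ through Hamilton's inequality; this is where the square-root structure of $B$ arises.

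For the local version (b), introduce a product cutoff $\eta(x,t) = \psi(t)\chi(d(x,x_0)/R)$ with $\chi$ a standard bump on $[0,1]$ and $\psi$ a temporal bump vanishing at $t=t_0-T$. Using Corollary \ref{c3.3} one has $\Delta_V\chi \geq -C(1+R\sqrt{K})/R^2$ and $|\nabla\chi|^2/\chi\leq C/R^2$, handled via Calabi's trick at cut points. Applying the maximum principle to $\eta\cdot P(\xi,\xi)$ on $Q_{R,T}(x_0,t_0)$, the cutoff terms contribute $(1+R\sqrt K)/R^2$ and $1/T$ to the bound, and the Hamilton estimate again absorbs the gradient terms; the appearance of $(1+\ln(A/u))^2$ rather than $1+\ln(A/u)$ is forced by the cross-term $2\langle\nabla\chi,\nabla P\rangle/\chi$, which after applying Cauchy--Schwarz produces an extra factor of $1+\ln(A/u)$.

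The principal obstacle is the bookkeeping for the drift-induced terms in the commutator $[\nabla^2,\Delta_V]$: specifically, the first-derivative contribution $\nabla^2 V \ast \nabla u$ can only be controlled by $\sqrt{K_1\sup|V|^2}\cdot|\nabla u|$, which together with the Hamilton bound $|\nabla u|\leq C u\sqrt{(1/t+K)\ln(A/u)}$ must be balanced against the good negative term $-(2/m)|\nabla^2 u|^2$ arising from the refined Bochner formula \eqref{2.2}. Verifying that the specific constant $\mathcal{K}_V = K_1+K_2+\sqrt{(K_1+K_2)K+K_2+K_1\sup|V|^2}$ yields a genuine contradiction in the maximum principle argument, and carefully tracking the $m$- and $n$-dependences, is the main technical labor.
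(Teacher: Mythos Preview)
Your plan has a genuine structural gap: the direct tensor maximum principle applied to $P_{ij}=\nabla_i\nabla_j u-\phi\,g_{ij}$ does not close. The evolution $(\partial_t-\Delta_V)\nabla_i\nabla_j u=\mathscr{R}_{ij}(\nabla^2 u,\nabla u)$ is \emph{linear} in $\nabla^2 u$, so at a first zero of $\lambda_{\max}(P)$ you only know $\nabla^2 u(\xi,\xi)=\phi$ while the curvature reaction $R_{kij\ell}\nabla^k\nabla^\ell u\,\xi^i\xi^j$ involves the full Hessian, whose negative eigenvalues are uncontrolled. Your barrier contributes $(\partial_t-\Delta_V)\phi=-\tfrac{5}{t^2}u(1-f)+(B+\tfrac{5}{t})\tfrac{|\nabla u|^2}{u}$; the first term has the wrong sign for the contradiction and the second can vanish, so there is no mechanism forcing $\mathscr{R}(\xi,\xi)<(\partial_t-\Delta_V)\phi$. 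The ``good negative term $-(2/m)|\nabla^2 u|^2$'' you cite from \eqref{2.2} lives in the evolution of $|\nabla u|^2$, not of $\nabla_i\nabla_j u$, and does not appear in your scheme.

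The paper (following Han--Zhang) repairs this by working not with $\nabla^2 u$ directly but with the normalized pair
\[
v_{ij}=\frac{\nabla_i\nabla_j u}{u(1-f)},\qquad w_{ij}=\frac{\nabla_i u\,\nabla_j u}{u^2(1-f)^2},\qquad f=\ln\tfrac{u}{A},
\]
and applying the maximum principle to the largest eigenvalue $\lambda$ of $\alpha\boldsymbol{V}+\boldsymbol{W}-\tfrac{\tau}{t}\boldsymbol{g}$ (part~(a)) or of $\psi(\alpha\boldsymbol{V}+\boldsymbol{W})+\beta f\boldsymbol{g}$ (part~(b)). The point is that the evolution of $\boldsymbol{W}$ under $\square_V=\partial_t-\Delta_V+\tfrac{2f}{1-f}\langle\nabla f,\nabla\cdot\rangle$ carries the genuinely quadratic term $-2(\boldsymbol{V}+f\boldsymbol{W})^2$, which at the maximal eigenvector yields $-2\lambda^2/\alpha^2$. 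This is the term that absorbs the $K_1$--sized curvature reaction and produces a closed inequality of the form $2\lambda^2/\alpha^2\le \tau/t^2+|(\boldsymbol{P}\oplus_\alpha\boldsymbol{Q})(\xi,\xi)|$. To bound the right-hand side one also needs the Li--Yau estimate \eqref{5.5} (not only Hamilton's), because the negative eigenvalues of $\alpha\boldsymbol{V}+\boldsymbol{W}$ are controlled through the trace $\alpha\Delta u/[u(1-f)]+w$ via $-\alpha\Delta u/u\le n\alpha^2/(2t)+n\alpha^2K/(\alpha-1)+\tfrac{\alpha^2}{2}|V|^2-\tfrac{|\nabla u|^2}{2u^2}$; Hamilton's bound is used separately to estimate $|\boldsymbol{W}|$. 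Finally, the square $(1+\ln(A/u))^2$ in part~(b) is not a Cauchy--Schwarz artifact of the cutoff cross-term: one factor of $(1-f)$ is built into the definition of $v_{ij}$, and the second comes from replacing the barrier $\tau/t$ by $\beta f$ with $\beta\sim R^{-2}$, so that $\psi\alpha\boldsymbol{V}(\xi,\xi)\le\mu+\beta|f|$ contributes the extra $(1-f)$.
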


Let $(\mathcal{M},g)$ be a closed $m$-dimensional Riemannian manifold
with ${\rm Ric}^{n,m}_{V}\geq-K$ where $K\geq0$, and $u$ a solution of
\begin{equation}
\partial_{t}u=\Delta_{V}u\label{6.3}
\end{equation}
in $\mathcal{M}\times(0,T]$, where $T\in(0,\infty)$, and $0<u\leq A$. Set
\begin{equation}
f:=\ln\frac{u}{A}\label{6.4}
\end{equation}
as in \cite{HanZhang12}. Then
\begin{equation*}
\nabla f=\frac{\nabla u}{u}, \ \ \ \nabla^{2}f=\frac{\nabla^{2}u}{u}
-\frac{\nabla u\otimes\nabla u}{u^{2}}, \ \ \ \Delta f=\frac{\Delta u}{u}
-\frac{|\nabla u|^{2}}{u^{2}},
\end{equation*}
and
\begin{equation}
\partial_{t}f=\frac{\partial_{t}u}{u}=\frac{\Delta_{V}u}{u}=
\Delta_{V}f+|\nabla f|^{2}.\label{6.5}
\end{equation}
As in \cite{HanZhang12}, we introduce the following quantities
\begin{eqnarray}
v_{ij}&:=&\frac{\nabla_{i}\nabla_{j}u}{u(1-f)}, \ \ \
w_{ij} \ \ := \ \ \frac{\nabla_{i}u\nabla_{j}u}{u^{2}(1-f)^{2}}, \label{6.6}\\
V&:=&(v_{ij}), \ \ \ W \ \ := \ \ (w_{ij}), \ \ \ w \ \ := \ \ {\rm tr}(W) \ \
= \ \ \frac{|\nabla u|^{2}}{u^{2}(1-f)^{2}}.\label{6.7}
\end{eqnarray}
Using $\partial_{t}(u(1-f))=(1-f)\partial_{t}u-u\partial_{t}f=
-f\partial_{t}u$ we have
\begin{equation}
\partial_{t}v_{ij}=\frac{\nabla_{i}\nabla_{j}\partial_{t}u}{u(1-f)}
+f\frac{\partial_{t}u\nabla_{i}\nabla_{j}u}{u^{2}(1-f)^{2}}.\label{6.8}
\end{equation}
Similarly,
\begin{equation}
\nabla_{k}v_{ij}=\frac{\nabla_{k}\nabla_{i}\nabla_{j}u}{u(1-f)}
+f\frac{\nabla_{k}u\nabla_{i}\nabla_{j}u}{u^{2}(1-f)^{2}}.\label{6.9}
\end{equation}
By the commutation formula (see \cite{HanZhang12}, page 4) we have
\begin{eqnarray*}
\partial_{t}\nabla_{i}\nabla_{j}u
&=&\nabla_{i}\nabla_{j}\left(\Delta u+\langle V,\nabla u\rangle\right)\\
&=&\Delta\nabla_{i}\nabla_{j}u
+2R_{kij\ell}\nabla^{k}\nabla^{\ell}u-R_{i\ell}\nabla_{j}\nabla^{\ell}u
-R_{j\ell}\nabla_{i}\nabla^{\ell}u\\
&&- \ \left(\nabla_{i}R_{j\ell}+\nabla_{j}
R_{i\ell}-\nabla_{\ell}R_{ij}\right)\nabla^{\ell}u
+\nabla_{i}\nabla_{j}\langle V,\nabla u\rangle.
\end{eqnarray*}
The last term on the right-hand side is equal to
\begin{eqnarray*}
\nabla_{i}\nabla_{j}\langle V,\nabla u\rangle&=&\nabla_{i}\left(\nabla_{k}u
\nabla_{j}V^{k}+V^{k}\nabla_{j}\nabla_{k}u\right)\\
&=&\nabla_{k}u\nabla_{i}\nabla_{j}V^{k}+\nabla_{i}
\nabla_{k}u\nabla_{j}V^{k}+\nabla_{i}V^{k}\nabla_{j}\nabla_{k}u
+V^{k}\nabla_{i}\nabla_{j}\nabla_{k}u;
\end{eqnarray*}
using the commutation formula
\begin{equation*}
\nabla_{i}\nabla_{j}\nabla_{k}u
=\nabla_{i}\nabla_{k}\nabla_{j}u=\nabla_{k}\nabla_{i}
\nabla_{j}u-R_{ikj\ell}\nabla^{\ell}u
\end{equation*}
we arrive at
\begin{eqnarray*}
\nabla_{i}\nabla_{j}\langle V,\nabla u\rangle
&=&V^{k}\nabla_{k}\nabla_{i}\nabla_{j}u
+R_{kij\ell}V^{k}\nabla^{\ell}u+\nabla_{k}u\nabla_{i}\nabla_{j}V^{k}\\
&&+ \ \nabla_{i}\nabla_{k}u\nabla_{j}V^{k}
+\nabla_{j}\nabla_{k}u\nabla_{i}V^{k}.
\end{eqnarray*}
Therefore
\begin{eqnarray}
\partial_{t}\nabla_{i}\nabla_{j}u
&=&\Delta_{V}\nabla_{i}\nabla_{j}u+R_{kij\ell}\left(2\nabla^{k}
\nabla^{\ell}u+V^{k}\nabla^{\ell}u\right)\nonumber\\
&&- \ \left(\nabla_{i}R_{j}{}^{k}
+\nabla_{j}R_{i}{}^{k}-\nabla^{k}R_{ij}-\nabla_{i}\nabla_{j}V^{k}\right)
\nabla_{k}u\label{6.10}\\
&&- \ \nabla_{i}\nabla_{k}u
\left(R_{j}{}^{k}-\nabla_{j}V^{k}\right)
-\nabla_{j}\nabla_{k}u\left(R_{i}{}^{k}-\nabla_{i}V^{k}\right).\nonumber
\end{eqnarray}
Interchanging $i$ and $j$ in (\ref{6.10}) and then adding it into (\ref{6.10}) imply
\begin{eqnarray}
\left(\partial_{t}-\Delta_{V}\right)\nabla_{i}\nabla_{j}u&=&R_{kij\ell}
\left(2\nabla^{k}\nabla^{\ell}u+\frac{V^{k}\nabla^{\ell}u+V^{\ell}\nabla^{k}u}{2}\right)
\nonumber\\
&&- \ \left(\nabla_{i}R_{j}{}^{k}+\nabla_{j}R_{i}{}^{k}
-\nabla^{k}R_{ij}-\frac{\nabla_{i}\nabla_{j}V^{k}
+\nabla_{j}\nabla_{i}V^{k}}{2}\right)\nabla_{k}u\label{6.11}\\
&&- \ \nabla_{i}\nabla_{k}u\left(R_{j}{}^{k}-\nabla_{j}V^{k}\right)
-\nabla_{j}\nabla_{k}u\left(R_{i}{}^{k}
-\nabla_{i}V^{k}\right).\nonumber
\end{eqnarray}
Recall the Bakry-Emery Ricci curvatures
\begin{equation*}
{\rm Ric}_{V}:={\rm Ric}-\frac{1}{2}\mathscr{L}_{V}g, \ \ \
{\rm Ric}^{n,m}_{V}:={\rm Ric}_{V}-\frac{1}{n-m}V\otimes V.
\end{equation*}
Then
\begin{eqnarray*}
\nabla_{k}u\nabla_{i}({\rm Ric}_{V})_{j}{}^{k}
&=&\nabla_{k}u\nabla_{i}\left(R_{j}{}^{k}-\frac{\nabla_{j}V^{k}+\nabla^{k}V_{j}}{2}
\right)\\
&=&\nabla_{k}u\nabla_{i}\left(R_{j}{}^{k}-\frac{1}{2}\nabla_{j}V^{k}
\right)-\frac{1}{2}\nabla_{k}u\nabla_{i}\nabla^{k}V_{j},\\
\nabla_{k}u\nabla^{k}({\rm Ric}_{V})_{ij}&=&\nabla_{k}u\nabla^{k}
\left(R_{ij}-\frac{\nabla_{i}V_{j}+\nabla_{j}V_{i}}{2}\right)\\
&=&\nabla_{k}u\nabla^{k}R_{ij}-\frac{1}{2}\nabla^{k}u
\left(\nabla_{k}\nabla_{i}V_{j}
+\nabla_{k}\nabla_{j}V_{i}\right)\\
&=&\nabla_{k}u\nabla^{k}R_{ij}\\
&&- \ \frac{1}{2}\nabla^{k}u\left(\nabla_{i}\nabla_{k}V_{j}
+\nabla_{j}\nabla_{k}V_{i}-R_{kij\ell}V^{\ell}
-R_{kji\ell}V^{\ell}\right).
\end{eqnarray*}
The middle term on the right-hand side of (\ref{6.11}) can be now rewritten as
\begin{eqnarray*}
&&\left(\nabla_{i}R_{j}{}^{k}+\nabla_{j}R_{i}{}^{k}
-\nabla^{k}R_{ij}-\frac{\nabla_{i}\nabla_{j}V^{k}
+\nabla_{j}\nabla_{i}V^{k}}{2}\right)\nabla_{k}u\\
&=&\left[\nabla_{i}({\rm Ric}_{V})_{j}{}^{k}
+\nabla_{j}({\rm Ric}_{V})_{i}{}^{k}
-\nabla^{k}({\rm Ric}_{V})_{ij}\right]\nabla_{k}u+\frac{1}{2}R_{kij\ell}\left(V^{\ell}\nabla^{k}u+V^{k}
\nabla^{\ell}u\right).
\end{eqnarray*}
Therefore
\begin{eqnarray}
\left(\partial_{t}-\Delta_{V}\right)\nabla_{i}\nabla_{j}u
&=&2R_{kij\ell}\nabla^{k}\nabla^{\ell}u-\nabla_{i}\nabla_{k}u({\rm Ric}_{V})_{j}{}^{k}
-\nabla_{j}\nabla_{k}u({\rm Ric}_{V})_{i}{}^{k}\nonumber\\
&&- \ \bigg(\nabla_{i}({\rm Ric}_{V})_{j}{}^{k}
+\nabla_{j}({\rm Ric}_{V})_{i}{}^{k}
-\nabla^{k}({\rm Ric}_{V})_{ij}\bigg)\nabla_{k}u\label{6.12}\\
&&- \ \nabla_{i}\nabla^{k}u\frac{\nabla_{k}V_{j}
-\nabla_{j}V_{k}}{2}-\nabla_{j}\nabla^{k}u\frac{\nabla_{k}V_{i}
-\nabla_{i}V_{k}}{2}.\nonumber
\end{eqnarray}

\begin{lemma}\label{l6.2} We have
\begin{eqnarray*}
\left(\partial_{t}-\Delta_{V}\right)v_{ij}&=&-\frac{2f}{1-f}\nabla^{k}f\nabla_{k}v_{ij}
-\frac{|\nabla f|^{2}}{1-f}v_{ij}+\frac{1}{u(1-f)}\bigg[2R_{kij\ell}\nabla^{k}\nabla^{\ell}u\\
&&- \ \nabla_{i}\nabla_{k}u({\rm Ric}_{V})_{j}{}^{k}
-\nabla_{j}\nabla_{k}({\rm Ric}_{V})_{i}{}^{k}
-\bigg(\nabla_{i}({\rm Ric}_{V})_{j}{}^{k}\\
&&+ \ \nabla_{j}({\rm Ric}_{V})_{i}{}^{k}-\nabla^{k}({\rm Ric}_{V})_{ij}\bigg)
\nabla_{k}u-\nabla_{i}\nabla^{k}u\frac{\nabla_{k}V_{j}-\nabla_{j}V_{k}}{2}\\
&&- \ \nabla_{j}\nabla^{k}u\frac{\nabla_{k}V_{i}
-\nabla_{i}V_{k}}{2}\bigg]
\end{eqnarray*}
\end{lemma}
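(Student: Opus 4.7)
The plan is to view $v_{ij}$ as a product and apply the Leibniz rule for $\partial_t-\Delta_V$. Writing $\phi:=1/[u(1-f)]$ and $\psi_{ij}:=\nabla_i\nabla_j u$, so that $v_{ij}=\phi\,\psi_{ij}$, the identity
\[
(\partial_t-\Delta_V)(\phi\psi_{ij})=\phi\,(\partial_t-\Delta_V)\psi_{ij}+\psi_{ij}\,(\partial_t-\Delta_V)\phi-2\langle\nabla\phi,\nabla\psi_{ij}\rangle
\]
splits the computation into three pieces. The first piece is supplied directly by (\ref{6.12}) multiplied by $\phi$; this already produces the long bracket $\frac{1}{u(1-f)}[\cdots]$ on the right-hand side of the claimed formula. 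What remains is to extract the two transport-type corrections $-\frac{2f}{1-f}\nabla^k f\,\nabla_k v_{ij}$ and $-\frac{|\nabla f|^2}{1-f}v_{ij}$ from the other two pieces.

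For the scalar piece I would set $h:=u(1-f)$, so $\phi=h^{-1}$, and apply the product rule to $h=u-uf$ using the evolution equations (\ref{6.3}) and (\ref{6.5}). Since $\partial_t u=\Delta_V u$, the second-order terms cancel between $\partial_t h$ and $\Delta_V h$; what remains, after using $\nabla u=u\nabla f$, is exactly
\[
(\partial_t-\Delta_V)h=u|\nabla f|^2,\qquad \nabla h=-uf\,\nabla f.
\]
The chain rule for $h^{-1}$ then gives
\[
(\partial_t-\Delta_V)\phi=-h^{-2}(\partial_t-\Delta_V)h-2h^{-3}|\nabla h|^2=-\frac{|\nabla f|^2}{u(1-f)^2}-\frac{2f^2|\nabla f|^2}{u(1-f)^3},
\]
and multiplying by $\psi_{ij}=u(1-f)\,v_{ij}$ contributes $-\frac{|\nabla f|^2}{1-f}v_{ij}-\frac{2f^2|\nabla f|^2}{(1-f)^2}v_{ij}$ to $(\partial_t-\Delta_V)v_{ij}$.

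For the cross term I would invert (\ref{6.9}) to write
\[
\nabla_k\nabla_i\nabla_j u=u(1-f)\,\nabla_k v_{ij}-fu\,\nabla_k f\cdot v_{ij}.
\]
Since $\nabla\phi=-h^{-2}\nabla h$ is proportional to $f\nabla f/[u(1-f)^2]$, pairing it with this expression splits the cross term into the desired first-order piece $-\frac{2f}{1-f}\nabla^k f\,\nabla_k v_{ij}$ plus a remainder $+\frac{2f^2|\nabla f|^2}{(1-f)^2}v_{ij}$.

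Adding the three contributions finishes the computation. The crucial observation is that the stray $\frac{2f^2|\nabla f|^2}{(1-f)^2}v_{ij}$ from the scalar piece is exactly cancelled by its counterpart in the cross term, leaving only the transport term, the clean $-|\nabla f|^2 v_{ij}/(1-f)$, and the scaled curvature-commutator expression from (\ref{6.12}) --- precisely the content of the lemma. The main obstacle is thus not conceptual but organizational: one must track signs and powers of $(1-f)$ carefully through the $h^{-1}$-differentiation, because a single slip would destroy this cancellation and corrupt the clean transport structure that makes $v_{ij}$ amenable to the maximum-principle analysis to follow.
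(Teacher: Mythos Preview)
Your proposal is correct and follows essentially the same route as the paper: both arguments reduce the computation to the commutator identity (\ref{6.12}) for $(\partial_t-\Delta_V)\nabla_i\nabla_j u$ and then handle the scalar factor $1/[u(1-f)]$ separately, with the paper expanding $\partial_t v_{ij}$ and $\Delta_V v_{ij}$ directly while you package the same bookkeeping via the Leibniz rule for $\partial_t-\Delta_V$ applied to the product $\phi\psi_{ij}$. The cancellation of the $\tfrac{2f^2|\nabla f|^2}{(1-f)^2}v_{ij}$ terms that you highlight is exactly the one implicit in the paper's combination of its $\Delta_V v_{ij}$ expansion with (\ref{6.9}).
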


\begin{proof} Using $u\nabla f=\nabla u$ and an identity in \cite{HanZhang12}
(page 4, line -6) we have
\begin{eqnarray*}
\Delta_{V}v_{ij}&=&\Delta v_{ij}+V^{k}\nabla_{k}v_{ij}\\
&=&\frac{\Delta\nabla_{i}\nabla_{j}u}{u(1-f)}
+\frac{f\Delta u\nabla_{i}\nabla_{j}u}{u^{2}(1-f)^{2}}
+\frac{2f\nabla^{k}u\nabla_{k}\nabla_{i}\nabla_{j}u}{u^{2}(1-f)^{2}}
+\frac{\nabla_{i}\nabla_{j}u\langle\nabla u,\nabla f\rangle}{u^{2}(1-f)^{2}}\\
&&+ \ \frac{2f^{2}\nabla_{i}\nabla_{j}u|\nabla u|^{2}}{u^{3}(1-f)^{3}}
+\frac{V^{k}\nabla_{k}\nabla_{i}\nabla_{j}u}{u(1-f)}
+\frac{\langle V,\nabla u\rangle f\nabla_{i}\nabla_{j}u}{u^{2}(1-f)^{2}}\\
&=&\frac{\Delta_{V}\nabla_{i}\nabla_{j}u}{u(1-f)}
+\frac{f\Delta_{V} u\nabla_{i}\nabla_{j}u}{u^{2}(1-f)^{2}}
+\frac{2f\nabla^{k}u\nabla_{k}\nabla_{i}\nabla_{j}u}{u^{2}(1-f)^{2}}
+\frac{\nabla_{i}\nabla_{j}u\langle\nabla u,\nabla f\rangle}{u^{2}(1-f)^{2}}\\
&&+ \ \frac{2f^{2}\nabla_{i}\nabla_{j}u|\nabla u|^{2}}{u^{3}(1-f)^{3}}.
\end{eqnarray*}
Similarly,
\begin{eqnarray*}
\partial_{t}v_{ij}&=&\frac{\partial_{t}\nabla_{i}\nabla_{j}u}{u(1-f)}
-\frac{\nabla_{i}\nabla_{j}u}{u^{2}(1-f)^{2}}
[\partial_{t}u(1-f)-u\partial_{t}f]\\
&=&\frac{\partial_{t}\nabla_{i}\nabla_{j}u}{u(1-f)}
+\frac{f\Delta_{V}u\nabla_{i}\nabla_{j}u}{u^{2}(1-f)^{2}}.
\end{eqnarray*}
Combing these two identities yields
\begin{eqnarray*}
\left(\partial_{t}-\Delta_{V}\right)v_{ij}
&=&\frac{1}{u(1-f)}\left(\partial_{t}-\Delta_{V}\right)
\nabla_{i}\nabla_{j}u-\frac{2f\nabla^{k}f\nabla_{k}\nabla_{i}
\nabla_{j}u}{u(1-f)^{2}}
-\frac{\nabla_{i}\nabla_{j}u|\nabla f|^{2}}{u(1-f)^{2}}\\
&&- \ \frac{2\nabla_{i}\nabla_{j}u}{u(1-f)^{3}}f^{2}|\nabla f|^{2}.
\end{eqnarray*}
Using (\ref{6.9}) and (\ref{6.12}) we prove the desired identity.
\end{proof}

When $V$ is gradient (i.e., $V=\nabla \phi$ for some smooth function $\phi$
on $\mathcal{M}$), Lemma \ref{l6.2} reduces to
Lemma 2.1 in \cite{HanZhang12} where $\Delta$ is replaced by $\Delta_{\phi}$.

\begin{lemma}\label{l6.3} We have
\begin{eqnarray*}
\left(\partial_{t}-\Delta_{V}\right)w_{ij}&=&-\frac{2f}{1-f}\nabla^{k}f\nabla_{k}w_{ij}
-\frac{2|\nabla f|^{2}}{1-f}w_{ij}-2(v_{ik}+fw_{ik})(v_{j}{}^{k}
+fw_{j}{}^{k})\\
&&- \ ({\rm Ric}_{V})_{i}{}^{k}w_{jk}-({\rm Ric}_{V})_{j}{}^{k}w_{ik}\\
&&- \ w_{i}{}^{k}\frac{\nabla_{k}V_{j}-\nabla_{j}V_{k}}{2}
-w_{j}{}^{k}\frac{\nabla_{k}V_{i}
-\nabla_{i}V_{k}}{2}.
\end{eqnarray*}
\end{lemma}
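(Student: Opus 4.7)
The approach will mirror the proof of Lemma 6.2 but for the rank-one quadratic $w_{ij}$. First I would write $w_{ij} = \psi_i\psi_j/(1-f)^2$ with $\psi_i := \nabla_i f = \nabla_i u/u$, so that the tensor is a scalar multiple of the outer product $\psi\otimes\psi$. The evolution of $\psi$ itself I would obtain by commuting $\nabla_i$ through $\partial_t - \Delta_V$: using (6.5) together with the standard Ricci commutator (already used in deriving (6.12)) gives
\begin{equation*}
(\partial_t - \Delta_V)\psi_i = -({\rm Ric}_V)_{ik}\psi^k - \tfrac{1}{2}(\nabla_k V_i - \nabla_i V_k)\psi^k + 2\psi^k\nabla_i\psi_k,
\end{equation*}
where the Bakry--Emery decomposition $R_{ik} - \nabla_i V_k = ({\rm Ric}_V)_{ik} + \tfrac{1}{2}(\nabla_k V_i - \nabla_i V_k)$ is used to separate the symmetric ${\rm Ric}_V$ piece from an antisymmetric $dV^\flat$ contribution.

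Next I would apply the product rule for $(\partial_t - \Delta_V)$---namely $(\partial_t - \Delta_V)(AB) = A(\partial_t-\Delta_V)B + B(\partial_t-\Delta_V)A - 2\langle \nabla A,\nabla B\rangle$, which is valid for $\Delta_V$ since the first-order part $V\cdot\nabla$ satisfies the Leibniz rule with no cross term---first to the product $\psi_i\psi_j$ (producing a Kato-like cross term $-2\nabla_k\psi_i\nabla^k\psi_j$) and then to the outer factor $(1-f)^{-2}$ in $w_{ij}$, using (6.5) to compute $(\partial_t-\Delta_V)(1-f)^{-2} = 2(1-f)^{-3}|\nabla f|^2 - 6(1-f)^{-4}|\nabla f|^2$. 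To match the stated form, I would then substitute the Hessian-conversion identity
\begin{equation*}
\nabla_i\psi_k = \nabla_i\nabla_k f = (1-f)v_{ik} - (1-f)^2 w_{ik},
\end{equation*}
coming from $\nabla^2 u = u(\nabla^2 f + \nabla f\otimes\nabla f)$, into every $\nabla\psi$ that appears, and expand.

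Finally, the transport piece $-\tfrac{2f}{1-f}\nabla^k f\nabla_k w_{ij}$ on the right in the statement is not produced by the raw product rule; rather, computing $\nabla^k f\nabla_k w_{ij}$ explicitly and rearranging absorbs the first-order drift terms that otherwise appear. After this rearrangement the coefficients of $v_{ik}v_j{}^k$, $v_{ik}w_j{}^k + w_{ik}v_j{}^k$, and $w_{ik}w_j{}^k$ collapse to $-2$, $-2f$, and (after one further simplification) $-2f^2$, yielding the factorization $-2(v_{ik}+fw_{ik})(v_j{}^k+fw_j{}^k)$. The last simplification and the cleaning up of the scalar coefficient of $w_{ij}$ both rely on the rank-one identity
\begin{equation*}
w_i{}^k w_{kj} = \frac{|\nabla f|^2}{(1-f)^2}\,w_{ij},
\end{equation*}
which holds because $w$ is supported on $\mathrm{span}\,\nabla f$; this identity simultaneously cancels a leftover $(ww)$-piece against an extra $|\nabla f|^2 w_{ij}/(1-f)^2$ scalar piece to leave the clean $-\tfrac{2|\nabla f|^2}{1-f}w_{ij}$ in the statement. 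The main obstacle will be purely the bookkeeping of the $(1-f)$ powers: many rational-in-$(1-f)$ coefficients must be tracked in parallel, and the whole computation collapses to the stated compact form only after both the rank-one identity for $w$ and the symmetric/antisymmetric split of $\nabla V$ are invoked.
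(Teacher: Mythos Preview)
Your approach is correct but organized differently from the paper's.  The paper never introduces $\psi_i=\nabla_i f$ or computes $(\partial_t-\Delta_V)\psi_i$; instead it works directly in $u$-variables, computing $\partial_t w_{ij}$ and $\Delta_V w_{ij}$ separately from the definition $w_{ij}=\nabla_i u\,\nabla_j u/(u^2(1-f)^2)$, quoting an identity from \cite{HanZhang12} for $\Delta w_{ij}$, and then rewriting the $R_{ij}$ and $\nabla_j\Delta u$ pieces in terms of ${\rm Ric}_V$, $\Delta_V u$, and the antisymmetric part of $\nabla V$.  The four remaining ``middle'' terms are then packaged as a quantity $H$ and the paper again cites \cite{HanZhang12} for the simplification $H=-\frac{2f}{1-f}\nabla^k f\nabla_k w_{ij}-\frac{2|\nabla f|^2}{1-f}w_{ij}-2(v_{ik}+fw_{ik})(v_j{}^k+fw_j{}^k)$.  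Your route---evolve $\psi_i$, apply the $(\partial_t-\Delta_V)$ product rule to $\psi_i\psi_j$ and then to $(1-f)^{-2}$, convert Hessians via $\nabla_i\psi_k=(1-f)v_{ik}-(1-f)^2 w_{ik}$, and close with the rank-one identity $w_i{}^k w_{kj}=\frac{|\nabla f|^2}{(1-f)^2}w_{ij}$---is more self-contained and makes the factorization into $(v+fw)^2$ appear structurally rather than by recognition; the paper's route is shorter on the page only because it outsources both the Laplacian expansion and the $H$-simplification to \cite{HanZhang12}.
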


\begin{proof} Compute
\begin{eqnarray*}
\partial_{t}w_{ij}&=&\frac{\nabla_{i}u\nabla_{j}
\partial_{t}u+\nabla_{j}u\nabla_{i}
\partial_{t}u}{u^{2}(1-f)^{2}}
+\frac{2f\partial_{t}u\nabla_{i}u\nabla_{j}u
}{u^{3}(1-f)^{3}},\\
\nabla_{k}w_{ij}&=&\frac{\nabla_{i}u\nabla_{j}\nabla_{k}u
+\nabla_{j}u\nabla_{i}\nabla_{k}u}{u^{2}(1-f)^{2}}
+\frac{2f\nabla_{i}u\nabla_{j}u\nabla_{k}u}{u^{3}(1-f)^{3}}.
\end{eqnarray*}
By the identity in \cite{HanZhang12} (page 5, line 14), we have
\begin{eqnarray*}
\Delta_{V}w_{ij}&=&\Delta w_{ij}+V^{k}\left(\frac{\nabla_{i}u
\nabla_{j}\nabla_{k}u}{u^{2}(1-f)^{2}}+\frac{2f\nabla_{i}\nabla_{j}u
\nabla_{k}u}{u^{3}(1-f)^{3}}\right)\\
&=&\frac{\nabla_{i}u\nabla_{j}\Delta u
+2\nabla_{i}\nabla_{k}u\nabla_{j}\nabla^{k}u+\nabla_{j}u
\nabla_{i}\Delta u}{u^{2}(1-f)^{2}}+R_{i}{}^{k}\frac{\nabla_{k}u
\nabla_{j}u}{u^{2}(1-f)^{2}}\\
&&+ \ R_{j}{}^{k}\frac{\nabla_{k}u\nabla_{i}u}{u^{2}(1-f)^{2}}
+\frac{4f\nabla^{k}u(\nabla_{i}u\nabla_{j}
\nabla_{k}u+\nabla_{j}u\nabla_{i}\nabla_{k}u)}{u^{3}(1-f)^{3}}\\
&&+ \ \frac{2\nabla_{i}u\nabla_{j}u(\langle\nabla u,\nabla f\rangle
+f\Delta u)}{u^{3}(1-f)^{3}}+\frac{6f^{2}|\nabla u|^{2}
\nabla_{i}u\nabla_{j}u}{u^{4}(1-f)^{4}}\\
&&+ \ \frac{ V^{k}\nabla_{k}\nabla_{j}u\nabla_{i}u}{u^{2}(1-f)^{2}}
+\frac{V^{k}\nabla_{k}\nabla_{i}u\nabla_{j}u}{u^{2}(1-f)^{2}}+\frac{2f\langle V,\nabla u\rangle\nabla_{i}u\nabla_{j}u}{u^{3}(1-f)^{3}}.
\end{eqnarray*}
Since $\Delta u=\Delta_{V}u-V^{k}\nabla_{k}u$, it follows that
\begin{equation*}
\nabla_{j}\Delta u=\nabla_{j}\Delta_{V}u
-\nabla_{k}u\nabla_{j}V^{k}-V^{k}\nabla_{k}\nabla_{j}u
\end{equation*}
and then
\begin{equation*}
\nabla_{i}u\nabla_{j}\Delta u
+V^{k}\nabla_{k}\nabla_{j}u
\nabla_{i}u=\nabla_{i}u\nabla_{j}\Delta_{V}u
-\nabla_{i}u\nabla_{k}u\nabla_{j}V^{k}.
\end{equation*}
On the other hand, we have
\begin{equation*}
R_{j}{}^{k}\frac{\nabla_{k}u\nabla_{i}u}{u^{2}(1-f)^{2}}
=({\rm Ric}_{V})_{j}{}^{k}
\frac{\nabla_{k}u\nabla_{i}u}{u^{2}(1-f)^{2}}
+\frac{\nabla_{i}u\nabla^{k}u
(\nabla_{j}V_{k}+\nabla_{k}V_{j})}{2u^{2}(1-f)^{2}}.
\end{equation*}
Similarly, we can find an analogue identity for $R_{i}{}^{k}\nabla_{k}u\nabla_{j}u/u^{2}(1-f)^{2}$. Therefore
\begin{eqnarray*}
\Delta_{V}w_{ij}&=&\frac{\nabla_{i}u[\nabla_{j}\Delta_{V}u
+({\rm Ric}_{V})_{j}{}^{k}\nabla_{k}u]}{u^{2}(1-f)^{2}}
+\frac{\nabla_{j}u[\nabla_{i}\Delta_{V}u+({\rm Ric}_{V})_{i}{}^{k}
\nabla_{k}u]}{u^{2}(1-f)^{2}}\\
&&+ \ \frac{2\nabla_{i}\nabla_{k}u\nabla_{j}\nabla^{k}u}{u^{2}(1-f)^{2}}
+\frac{4f\nabla^{k}u(\nabla_{i}u\nabla_{j}
\nabla_{k}u+\nabla_{j}u\nabla_{i}\nabla_{k}u)}{u^{3}(1-f)^{3}}\\
&&+ \ \frac{2\nabla_{i}u\nabla_{j}u(\langle \nabla u,\nabla f\rangle
+f\Delta_{V}u)}{u^{3}(1-f)^{3}}+\frac{6f^{2}|\nabla u|^{2}
\nabla_{i}u\nabla_{j}u}{u^{4}(1-f)^{4}}\\
&&+ \ \frac{\nabla_{i}u\nabla^{k}u}{u^{2}(1-f)^{2}}
\frac{\nabla_{k}V_{j}-\nabla_{j}V_{k}}{2}
+\frac{\nabla_{j}u\nabla^{k}u}{u^{2}(1-f)^{2}}
\frac{\nabla_{k}V_{i}-\nabla_{i}V_{k}}{2}.
\end{eqnarray*}
Together with the expression of $\partial_{t}w_{ij}$, we arrive at
\begin{eqnarray*}
\left(\partial_{t}-\Delta_{V}\right)w_{ij}
&=&-({\rm Ric}_{V})_{i}{}^{k}\frac{\nabla_{k}u
\nabla_{j}u}{u^{2}(1-f)^{2}}
-({\rm Ric}_{V})_{j}{}^{k}
\frac{\nabla_{k}u\nabla_{i}u}{u^{2}(1-f)^{2}}\\
&&- \ \frac{2\nabla_{i}\nabla_{k}u
\nabla_{j}\nabla^{k}u}{u^{2}(1-f)^{2}}
-\frac{4f\nabla^{k}u(\nabla_{i}u
\nabla_{j}\nabla_{k}u+\nabla_{j}u\nabla_{i}\nabla_{k}u)}{u^{3}(1-f)^{3}}\\
&&-\frac{2\nabla_{i}u\nabla_{j}u
\langle\nabla u,\nabla f\rangle}{u^{3}(1-f)^{3}}
-\frac{6f^{2}|\nabla u|^{2}\nabla_{i}u
\nabla_{j}u}{u^{4}(1-f)^{4}}\\
&&- \ \frac{\nabla_{i}u\nabla^{k}u}{u^{2}(1-f)^{2}}
\frac{\nabla_{k}V_{j}-\nabla_{j}V_{k}}{2}
-\frac{\nabla_{j}u\nabla^{k}u}{u^{2}(1-f)^{2}}
\frac{\nabla_{k}V_{i}-\nabla_{i}V_{k}}{2}
\end{eqnarray*}
As in \cite{HanZhang12}, the middle four terms $H$ on the right-hand side can be written as
\begin{eqnarray*}
H&=&-\frac{2\nabla_{i}\nabla_{k}u
\nabla_{j}\nabla^{k}u}{u^{2}(1-f)^{2}}
-\frac{4f\nabla^{k}u(\nabla_{i}u
\nabla_{j}\nabla_{k}u+\nabla_{j}u\nabla_{i}\nabla_{k}u)}{u^{3}(1-f)^{3}}\\
&&-\frac{2\nabla_{i}u\nabla_{j}u
\langle\nabla u,\nabla f\rangle}{u^{3}(1-f)^{3}}
-\frac{6f^{2}|\nabla u|^{2}\nabla_{i}u
\nabla_{j}u}{u^{4}(1-f)^{4}}\\
&=&-\frac{2f}{1-f}\nabla^{k}f\nabla_{k}w_{ij}
-\frac{2|\nabla f|^{2}}{1-f}w_{ij}
-2(v_{ik}+fw_{ik})(v_{j}{}^{k}+fw_{j}{}^{k}).
\end{eqnarray*}
Plugging the expression of $H$ into $(\partial_{t}-\Delta_{V})w_{ij}$ we
obtain the result.
\end{proof}

From (\ref{6.7}) we see that
\begin{equation*}
w=\frac{|\nabla f|^{2}}{(1-f)^{2}}
\end{equation*}
so that Lemma \ref{l6.2} and Lemma \ref{l6.3} can be rewritten as
\begin{eqnarray*}
\left(\partial_{t}-\Delta_{V}\right)
v_{ij}&=&-\frac{2f}{1-f}\nabla^{k}f\nabla_{k}v_{ij}
-(1-f)w v_{ij}+2R_{kij\ell}v^{k\ell}-({\rm Riv}_{V})_{ik}v_{j}{}^{k}\\
&&- \ ({\rm Ric}_{V})_{jk}v_{i}{}^{k}+v_{i}{}^{k}(\mathscr{A}_{V}g)_{jk}+v_{j}{}^{k}
(\mathscr{A}_{V}g)_{ik}\\
&&- \ \frac{\nabla^{k}u}{u(1-f)}\bigg(\nabla_{i}({\rm Ric}_{V})_{jk}+\nabla_{j}({\rm Ric}_{V})_{ik}
-\nabla_{k}({\rm Ric}_{V})_{ij}\bigg),\\
\left(\partial_{t}-\Delta_{V}\right)w_{ij}&=&-\frac{2f}{1-f}\nabla^{k}f\nabla_{k}w_{ij}
-2(1-f)ww_{ij}-2(v_{ik}+fw_{ik})(v_{j}{}^{k}
+fw_{j}{}^{k})\\
&&- \ ({\rm Ric}_{V})_{ik}w_{j}{}^{k}
-({\rm Ric}_{V})_{jk}w_{i}{}^{k}+w_{i}{}^{k}(\mathscr{A}_{V}g)_{jk}
+w_{j}{}^{k}(\mathscr{A}_{V}g)_{ik},
\end{eqnarray*}
where $\mathscr{A}_{V}g$ stands for the tensor field given by
\begin{equation}
(\mathscr{A}_{V}g)_{ij}:=\frac{\nabla_{i}V_{j}-\nabla_{j}V_{i}}{2}.\label{6.13}
\end{equation}
The tensor field exactly the $2$-form $dV_{\flat}$ where $V_{\flat}$
is the corresponding $1$-form of $V$. When $V$ is a gradient vector field $
V=\nabla\phi$, we see that $\mathscr{A}_{V}g$ vanishes identically on
$\mathcal{M}$. In this sense $\mathscr{A}_{V}g$ is an obstruction of $V$
being gradient.

Let $p\in\mathcal{M}$ and choose a local orthonormal coordinates
$(x^{i})_{1\leq i\leq m}$ around $p$. We follow the method in
\cite{HanZhang12}. Consider the operator
\begin{equation}
\square_{V}:=\partial_{t}-\Delta_{V}+\frac{2f}{1-f}\langle\nabla f,\nabla\!\
\rangle.\label{6.14}
\end{equation}
Then the matrices $\boldsymbol{V}=(v_{ij})$ and $\boldsymbol{W}=(w_{ij})$ satisfy
\begin{eqnarray}
\square_{V}\boldsymbol{V}&=&-(1-f)w\boldsymbol{V}-\boldsymbol{P}-\boldsymbol{VA}
+\boldsymbol{AV},\label{6.15}\\
\square_{V}\boldsymbol{W}&=&-2(1-f)w\boldsymbol{W}
-2(\boldsymbol{V}+f\boldsymbol{W})^{2}-\boldsymbol{Q}-\boldsymbol{WA}
+\boldsymbol{AW},\label{6.16}
\end{eqnarray}
where $\boldsymbol{P}=(P_{ij}), \boldsymbol{Q}=(Q_{ij}), \boldsymbol{A}
=(A_{ij})$ are matrices whose entries are
\begin{eqnarray}
P_{ij}&:=&-2\sum_{1\leq k,\ell\leq m}R_{kij\ell}v_{k\ell}+
\sum_{1\leq k\leq m}\bigg[({\rm Ric}_{V})_{ik}v_{kj}+
v_{ik}({\rm Ric}_{V})_{kj}\nonumber\\
&&+ \ \frac{\nabla_{k}u}{u(1-f)}\bigg(\nabla_{i}({\rm Ric}_{V})_{jk}
+\nabla_{j}({\rm Ric}_{V})_{ik}-\nabla_{k}({\rm Ric}_{V})_{ij}\bigg)\bigg],\label{6.17}\\
Q_{ij}&:=&\sum_{1\leq k\leq m}\bigg(({\rm Ric}_{V})_{ik}w_{kj}
+w_{ik}({\rm Ric}_{V})_{kj}\bigg),\label{6.18}\\
A_{ij}&:=&(\mathscr{A}_{V}g)_{ij}.\label{6.20}
\end{eqnarray}
For any real number $\alpha$ we define
\begin{equation}
\boldsymbol{V}\oplus_{\alpha}\boldsymbol{W}:=
\alpha\boldsymbol{V}+\boldsymbol{W}.\label{6.20}
\end{equation}
Then
\begin{eqnarray}
\square_{V}(\boldsymbol{V}\oplus_{\alpha}
\boldsymbol{W})&=&-\alpha(1-f)w\boldsymbol{V}
-2(1-f)w\boldsymbol{V}-2(\boldsymbol{V}+f\boldsymbol{W})^{2}\nonumber\\
&&- \ \boldsymbol{P}\oplus_{\alpha}\boldsymbol{Q}
-(\boldsymbol{V}\oplus_{\alpha}\boldsymbol{W})\boldsymbol{A}
+\boldsymbol{A}(\boldsymbol{V}\oplus_{\alpha}\boldsymbol{W}).\label{6.21}
\end{eqnarray}
Let $\boldsymbol{\xi}\in T_{p}\mathcal{M}\cong{\bf R}^{m}$ be a unit eigenvector of $\boldsymbol{V}
\oplus_{\alpha}\boldsymbol{W}$, i.e., $(\boldsymbol{V}\oplus_{\alpha}\boldsymbol{W}
)\boldsymbol{\xi}=\lambda\boldsymbol{\xi}$. By parallel translation along
geodesics, we extend $\boldsymbol{\xi}$ to a smooth vector field, still
denoted by $\boldsymbol{\xi}$, near $p$. Then
\begin{equation}
\lambda=(\boldsymbol{V}\oplus_{\alpha}\boldsymbol{W})(\boldsymbol{\xi},
\boldsymbol{\xi})\label{6.22}
\end{equation}
is a smooth function near $p$. From (\ref{6.21}) and (\ref{6.22}) we obtain
\begin{eqnarray*}
\square_{V}\lambda&=&-\alpha(1-f)w\boldsymbol{V}(\boldsymbol{\xi},
\boldsymbol{\xi})-2(1-f)w\boldsymbol{W}(\boldsymbol{\xi},\boldsymbol{\xi})
-2|(\boldsymbol{V}+f\boldsymbol{W})\boldsymbol{\xi}|^{2}\\
&&- \ (\boldsymbol{P}\oplus_{\alpha}\boldsymbol{Q})(\boldsymbol{\xi},
\boldsymbol{\xi})-((\boldsymbol{V}\oplus_{\alpha}\boldsymbol{W})
\boldsymbol{A})(\boldsymbol{\xi},\boldsymbol{\xi})
+(\boldsymbol{A}(\boldsymbol{V}\oplus_{\alpha}
\boldsymbol{W}))(\boldsymbol{\xi},\boldsymbol{\xi})\\
&\leq&-\frac{2\lambda^{2}}{\alpha^{2}}
-\lambda\left(w-\frac{4}{\alpha^{2}}
\boldsymbol{W}(\boldsymbol{\xi},\boldsymbol{\xi})\right)
+f\lambda\left(w-\frac{4}{\alpha}\boldsymbol{W}
(\boldsymbol{\xi},\boldsymbol{\xi})\right)-(\boldsymbol{P}\oplus_{\alpha}\boldsymbol{Q})(\boldsymbol{\xi},
\boldsymbol{\xi})
\end{eqnarray*}
where we used the estimate (2.6) in \cite{HanZhang12} and
\begin{equation*}
((\boldsymbol{V}\oplus_{\alpha}\boldsymbol{W})\boldsymbol{A})(
\boldsymbol{\xi},\boldsymbol{\xi})
=\lambda\boldsymbol{A}(\boldsymbol{\xi},\boldsymbol{\xi})
=(\boldsymbol{A}(\boldsymbol{V}\oplus_{\alpha}
\boldsymbol{W}))(\boldsymbol{\xi},\boldsymbol{\xi}).
\end{equation*}
Since $\boldsymbol{W}(\xi,\xi)
\leq w$, it follows from (2.7) in \cite{HanZhang12} that
\begin{equation}
\square_{V}\lambda\leq-\frac{2\lambda^{2}}{\alpha^{2}}
-(\boldsymbol{P}\oplus_{\alpha}\boldsymbol{Q})(\boldsymbol{\xi},
\boldsymbol{\xi}) \ \ \ \text{at} \ p, \ \ \ \text{whenever} \ \lambda\geq0,\label{6.23}
\end{equation}
where $\alpha\geq4$.

{\bf Proof part (a) of Theorem \ref{t6.1}:} As in \cite{HanZhang12}, we consider the quantity
\begin{equation}
\boldsymbol{V}\oplus_{\alpha,\tau}\boldsymbol{W}
:=\alpha\boldsymbol{V}+\boldsymbol{W}-\frac{\tau}{t}\boldsymbol{g}\label{6.24}
\end{equation}
where $\boldsymbol{g}:=(g_{ij})$ and $\tau$ is a positive constant determined
later. Assume now that $\boldsymbol{V}\oplus_{\alpha,\tau}\boldsymbol{W}$ has the
largest nonnegative eigenvalue with the unit eigenvector $\boldsymbol{\xi}$ at a point $(p_{1},t_{1})$ with $t_{1}>0$. As before we consider
\begin{equation*}
\lambda:=(\boldsymbol{V}\oplus_{\alpha}\boldsymbol{W})(\boldsymbol{\xi},
\boldsymbol{\xi}), \ \ \ \mu:=(\boldsymbol{V}\oplus_{\alpha,\tau}
\boldsymbol{W})(\boldsymbol{\xi},\boldsymbol{\xi})=\lambda-\frac{\tau}{t}.
\end{equation*}
Since $\mu$ has its nonnegative maximum at $(p_{1},t_{1})$, it follows that
$\Delta\mu\leq0=\nabla\mu\leq\partial_{t}\mu$ and hence $\square_{V}\mu\leq0$ at $(p_{1},t_{1})$. Consequently,
\begin{equation}
\frac{2\lambda^{2}}{\alpha^{2}}
\leq\frac{\tau}{t^{2}}+|(\boldsymbol{P}\oplus_{\alpha}\boldsymbol{Q})(\boldsymbol{\xi},\boldsymbol{\xi})
| \ \ \ \text{at} \ (p_{1},t_{1})\label{6.25}
\end{equation}
as that of (2.11) in \cite{HanZhang12}. Let $\boldsymbol{\xi}=(\xi_{1},\cdots,\xi^{m})^{T}$ and note that
\begin{eqnarray*}
&&|(\boldsymbol{P}\oplus_{\alpha}\boldsymbol{Q})(\boldsymbol{\xi},
\boldsymbol{\xi})| \ \ \leq \ \ \alpha|\boldsymbol{P}(\boldsymbol{\xi},
\boldsymbol{\xi})|+|\boldsymbol{Q}(\boldsymbol{\xi},\boldsymbol{\xi})|\\
&\leq&\alpha\left|\sum_{1\leq i,j\leq m}
\xi_{i}\xi_{j}\left(-2\sum_{1\leq k,\ell\leq m}R_{kij\ell}v_{k\ell}
+\sum_{1\leq k\leq m}R^{V}_{ik}v_{kj}
+\sum_{1\leq k\leq m}v_{ik}R^{V}_{kj}\right)\right|\\
&&+ \ \left|\sum_{1\leq i,j,k\leq m}\xi_{i}\xi_{j}
\left(R^{V}_{ik}w_{kj}+w_{ik}R^{V}_{kj}\right)\right|\\
&&+ \ \alpha\left|\sum_{1\leq i,j,k\leq m}
\xi_{i}\xi_{j}\frac{\nabla_{k}u}{u(1-f)}
\left(\nabla_{i}R^{V}_{jk}+\nabla_{j}R^{V}_{ik}-\nabla_{k}R^{V}_{ij}
\right)\right|
\end{eqnarray*}
where $R^{V}_{ij}:=({\rm Ric}_{V})_{ij}$. Since $\boldsymbol{\xi}$ is unit,
it follows that
\begin{eqnarray*}
\left|\sum_{ 1\leq i,j,k\leq m}\xi_{i}\xi_{j}\left(R^{V}_{ik}w_{kj}
+w_{ik}R^{V}_{kj}\right)\right|&\leq&\sum_{1\leq i,j,k\leq m}
\left|R^{V}_{ik}w_{kj}+w_{ik}R^{V}_{kj}\right|\\
&\leq&2\left(\sum_{1\leq i,j,k\leq m}(R^{V}_{ik})^{2}\right)^{\frac{1}{2}}
\left(\sum_{1\leq i,j,k\leq m}w^{2}_{kj}\right)^{\frac{1}{2}}\\
&\leq&2m|{\rm Ric}_{V}||\boldsymbol{W}|.
\end{eqnarray*}
Similarly,
\begin{equation*}
\left|\sum_{1\leq i,j,k\leq m}
\xi_{i}\xi_{j}\frac{\nabla_{k}u}{u(1-f)}
\left(\nabla_{i}R^{V}_{jk}+\nabla_{j}R^{V}_{ik}-\nabla_{k}R^{V}_{ij}
\right)\right|\leq3m|\nabla{\rm Ric}_{V}||\boldsymbol{W}|^{1/2}.
\end{equation*}
As the inequality (2.12) in \cite{HanZhang12}, we have
\begin{eqnarray}
|(\boldsymbol{P}\oplus_{\alpha}\boldsymbol{Q})(\boldsymbol{\xi},
\boldsymbol{\xi})|&\leq&\left|\sum_{1\leq i,j\leq m}
\xi_{i}\xi_{j}\left(-2\sum_{1\leq k,\leq m}R_{kij\ell}(\alpha v_{k\ell}
+w_{k\ell})\right.\right.\nonumber\\
&&+ \ \left.\left.\sum_{1\leq k\leq m}R^{V}_{ik}(\alpha v_{kj}
+w_{kj})+\sum_{1\leq k\leq m}(\alpha v_{ik}
+w_{ik})R^{V}_{kj}\right)\right|\nonumber\\
&&+ \ \left|\sum_{1\leq i,j\leq m}\xi_{i}\xi_{j}
\left(-2\sum_{1\leq k,\ell\leq m}R_{kij\ell}w_{k\ell}
+\sum_{1\leq k\leq m}R^{V}_{ik}w_{kj}\right.\right.\label{6.26}\\
&&+ \ \left.\left.\sum_{1\leq k\leq m}w_{ik}R^{V}_{kj}\right)\right|
+3m|\nabla{\rm Ric}_{V}||\boldsymbol{W}|^{1/2}+2m|{\rm Ric}_{V}||\boldsymbol{W}|.\nonumber
\end{eqnarray}
In order to bound the function $|(\boldsymbol{P}\oplus_{\alpha}
\boldsymbol{Q})(\boldsymbol{\xi},\boldsymbol{\xi})|$ at the point $p_{1}$,
as in \cite{HanZhang12}, we choose a local coordinate system so that the
matrix $\boldsymbol{V}\oplus_{\alpha}\boldsymbol{W}$ is diagonal and $\boldsymbol{V}\oplus_{\alpha}\boldsymbol{W}-\frac{\tau}{t}\boldsymbol{g}
={\rm diag}(\mu_{1},\cdots,\mu_{m})$ with $\mu_{1}\leq\cdots\leq \mu_{m}$
and $\mu_{1}<0<\mu_{m}$. Then
\begin{eqnarray*}
&&\left|\sum_{1\leq i,j,k,\ell\leq m}\xi_{i}\xi_{j}R_{kij\ell}(\alpha v_{k\ell}
+w_{k\ell})\right|\\
&\leq&\sum_{1\leq i,j,k,\ell\leq m}\left|R_{kij\ell}
\left(\alpha v_{k\ell}+w_{k\ell}-\frac{\tau}{t}g_{k\ell}\right)\right|
+\sum_{1\leq i,j,k,\ell\leq m}\left|R_{kij\ell}g_{k\ell}\right|\frac{\tau}{t}\\
&=&\sum_{1\leq i,j,k\leq m}\left|R_{kijk}\mu_{k}\right|
+\sum_{1\leq i,j,k\leq m}\left|R_{kijk}\right|\frac{\tau}{t}\\
&\leq&\left(\sum_{1\leq i,j,k\leq m}R^{2}_{kijk}\right)^{1/2}
\left[\left(\sum_{1\leq i,j,k\leq m}\mu^{2}_{k}\right)^{1/2}
+\left(\sum_{1\leq i,j,k\leq m}1\right)^{1/2}\frac{\tau}{t}\right]\\
&\leq&|{\rm Rm}|\left(m\left(\sum_{1\leq k\leq m}\mu^{2}_{k}\right)^{1/2}
+m^{3/2}\frac{\tau}{t}\right)\\
&\leq&|{\rm Rm}|\left(m^{3/2}(\mu_{m}+|\mu_{1}|)+m^{3/2}\frac{\tau}{t}\right) \ \ = \ \ m^{3/2}|{\rm Rm}|\left(\mu_{m}
+|\mu_{1}|+\frac{\tau}{t}\right).
\end{eqnarray*}
Here we used the estimate that
\begin{equation*}
\left(\sum_{1\leq k\leq m}\mu^{2}_{k}\right)^{\frac{1}{2}}
\leq\bigg((m-i)\mu^{2}_{m}+i\mu^{2}_{1}\bigg)^{\frac{1}{2}}\leq\bigg(\sqrt{m-i}\mu_{m}
+\sqrt{i}|\mu_{1}|\bigg)^{\frac{1}{2}}\leq\sqrt{m}(\mu_{m}+|\mu_{1}|)
\end{equation*}
where $\mu_{i}$ is the largest eigenvalue so that $\mu_{i}<0$ but
$\mu_{i+1}\geq0$. Similarly, we have
\begin{eqnarray*}
&&\left|\sum_{1\leq i,j,k\leq m}\xi_{i}\xi_{j}R^{V}_{ik}(\alpha v_{kj}+w_{kj})\right|\\
&\leq&\sum_{1\leq i,j,k\leq m}\left|R^{V}_{ik}\left(\alpha v_{kj}+w_{kj}-\frac{\tau}{t}g_{kj}\right)
\right|+\sum_{1\leq i,j,k\leq m}|R^{V}_{ik}g_{kj}|\frac{\tau}{t}\\
&=&\sum_{1\leq i,j\leq m}|R^{V}_{ij}\mu_{j}|+\sum_{1\leq i,j\leq m}
|R^{V}_{ij}|\frac{\tau}{t}\\
&\leq&\left(\sum_{1\leq i,j\leq m}|R^{V}_{ij}|^{2}\right)^{1/2}
\left[\left(\sum_{1\leq i,j\leq m}\mu^{2}_{j}\right)^{1/2}
+m\frac{\tau}{t}\right]\\
&\leq&m|{\rm Ric}_{V}|\left[\sqrt{m}(\mu_{m}+|\mu_{1}|)+\frac{\tau}{t}\right] \ \ \leq \ \ m^{3/2}|{\rm Ric}_{V}|\left(\mu_{m}+|\mu_{1}|+\frac{\tau}{t}\right).
\end{eqnarray*}
Plugging those estimates into (\ref{6.26}) yields
\begin{eqnarray}
|(\boldsymbol{P}\oplus_{\alpha}\boldsymbol{Q})(\boldsymbol{\xi},
\boldsymbol{\xi})|&\leq&2m^{3/2}\bigg(|{\rm Rm}|+|{\rm Ric}_{V}|\bigg)
\bigg(\mu_{m}+|\mu_{1}|+\frac{\tau}{t}\bigg)\nonumber\\
&&+ \ 3m|\nabla{\rm Ric}_{V}||\boldsymbol{W}|^{1/2}
+4m\bigg(|{\rm Rm}|+|{\rm Ric}_{V}|\bigg)
|\boldsymbol{W}|.\label{6.27}
\end{eqnarray}
Set
\begin{equation}
K_{1}:=\max_{\mathcal{M}}\bigg(|{\rm Rm}|+|{\rm Ric}_{V}|\bigg), \ \ \
K_{2}:=\max_{\mathcal{M}}|\nabla{\rm Ric}_{V}|.\label{6.28}
\end{equation}
Therefore, using $2|\boldsymbol{W}|^{1/2}\leq 1+|\boldsymbol{W}|$, we arrive at
\begin{equation}
|(\boldsymbol{P}\oplus_{\alpha}\boldsymbol{Q})(\boldsymbol{\xi},\boldsymbol{\xi})
|\leq 2m^{3/2}K_{1}\bigg(\mu_{m}+|\mu_{1}|+\frac{\tau}{t}\bigg)
+2m K_{2}+4m(K_{1}+K_{2})|\boldsymbol{W}|.\label{6.29}
\end{equation}
By the page 9 in \cite{HanZhang12}, we have
\begin{equation*}
\mu_{m}+|\mu_{1}|\leq m\mu_{m}-\frac{\alpha\Delta u}{u(1-f)}
-\frac{|\nabla u|^{2}}{u^{2}(1-f)^{2}}+\frac{m\tau}{t}.
\end{equation*}
By (5.5), we deduce that
\begin{equation*}
-\frac{\alpha\Delta u}{u}\leq\frac{n\alpha^{2}}{2t}+\frac{n\alpha^{2}K}{\alpha-1}
+\frac{\alpha}{u}\langle V,\nabla u\rangle-\frac{|\nabla u|^{2}}{u^{2}}
\leq\frac{n\alpha^{2}}{2t}+\frac{n\alpha^{2}K}{\alpha-1}
+\frac{\alpha^{2}}{2}|V|^{2}-\frac{|\nabla u|^{2}}{2u^{2}}.
\end{equation*}
Since $1/(1-f)\leq1$ it follows that
\begin{eqnarray}
|(\boldsymbol{P}\oplus_{\alpha}\boldsymbol{Q})(\boldsymbol{\xi},\boldsymbol{\xi})|
&\leq&2m^{3/2}K_{1}\left(m\mu_{m}+\frac{n\alpha^{2}+2\tau}{2t}\right)+4m(K_{1}
+K_{2})|\boldsymbol{W}|\nonumber\\
&&+ \ 2m^{3/2}K_{1}\left(\frac{n\alpha^{2}K}{\alpha-1}+\frac{\alpha^{2}}{2}
|V|^{2}\right)+2mK_{2}\label{6.30}
\end{eqnarray}
at the point $(p_{1},t_{1})$. Because ${\rm Riv}^{n,m}_{V}\geq-K$ implies
${\rm Ric}_{V}\geq-K$, the estimate (5.7) tells us that
\begin{equation*}
|\boldsymbol{W}|=\frac{\nabla u|^{2}}{u^{2}(1-f)^{2}}
\leq\left(\frac{1}{t}+2K\right)\frac{-f}{(1-f)^{2}}\leq\frac{1}{4}
\left(\frac{1}{t}+2K\right).
\end{equation*}
Since $\mu=\mu_{m}<\lambda$ at $(p_{1},t_{1})$, by the same argument in the
page 10 of \cite{HanZhang12}, we obtain
\begin{eqnarray}
\frac{2\lambda^{2}}{\alpha^{2}}
&\leq&\frac{\tau}{t^{2}}+m^{3/2}(K_{1}+K_{2})\left(2m\lambda+\frac{n\alpha^{2}
+2\tau+1}{t}\right)
+m^{3/2}\alpha^{2}|V|^{2}K_{1}\nonumber\\
&&+ \ 2mK_{2}+mKK_{2}+
\left(\frac{2m^{3/2}n\alpha^{2}}{\alpha-1}+m\right)KK_{1}\label{6.31}
\end{eqnarray}
from (\ref{6.25}), at the point $(p_{1},t_{1})$. By assumption $n\geq m$ and $\alpha\geq4$, we have
\begin{equation*}
n\alpha^{2}+2\tau+1\leq n\alpha^{2}+\alpha^{2}\tau+\alpha^{2}
\leq(n+1)\alpha^{2}(1+\tau)
\end{equation*}
and hence
\begin{equation*}
m^{3/2}(K_{1}+K_{2})\left(2m\lambda+\frac{n\alpha^{2}+2\tau+1}{t}\right)
\leq 2n m^{3/2}\alpha^{2}(K_{1}+K_{2})\left(\lambda+\frac{1+\tau}{t}\right).
\end{equation*}
Letting
\begin{eqnarray*}
B_{1}&:=&2nm^{3/2}\alpha^{2}(K_{1}+K_{2}), \\
B_{2}&:=&m^{3/2}\alpha^{2}|V|^{2}K_{1}+2mK_{2}+mKK_{2}+
\left(\frac{2m^{3/2}n\alpha^{2}}{\alpha-1}
+m\right)KK_{1}
\end{eqnarray*}
we conclude from (\ref{6.31}) that
\begin{equation}
\frac{2\lambda^{2}}{\alpha^{2}}
\leq\frac{\tau}{t^{2}}+B_{1}\left(\alpha
\frac{\lambda}{\alpha}+\frac{1+\tau}{t}\right)+B_{2}.\label{6.32}
\end{equation}
By Cauchy's inequality, we get $B_{1}\lambda\leq\frac{\lambda^{2}}{\alpha^{2}}
+\frac{\alpha^{2}B^{2}_{1}}{4}$ and hence
\begin{equation*}
\frac{\lambda^{2}}{\alpha^{2}}\leq\frac{\tau+1}{t^{2}}
+\frac{B_{1}\sqrt{\tau+1}}{2}2\frac{\sqrt{\tau+1}}{t}+B_{2}+\frac{\alpha^{2}B^{2}_{1}}{4}.
\end{equation*}
Putting
\begin{equation*}
B:=\max\left\{\frac{B_{1}\sqrt{\tau+1}}{2},\sqrt{B_{2}+\frac{1}{4}\alpha^{2}
B^{2}_{1}}\right\}
\end{equation*}
the above inequality yields
\begin{equation}
\frac{\lambda}{\alpha}\leq\frac{\sqrt{\tau+1}}{t}+B\label{6.33}
\end{equation}
at the point $(p_{1},t_{1})$. As in the page 10 of \cite{HanZhang12}, we then arrive at
\begin{equation*}
(\boldsymbol{V}\oplus_{\alpha}\boldsymbol{W})(\boldsymbol{\eta},
\boldsymbol{\eta})-\frac{\tau}{t}\leq\left(\lambda-\frac{\tau}{t}\right)_{(p_{1},t_{1})}
\leq\frac{\alpha\sqrt{\tau+1}-\tau}{t}+\alpha B
\end{equation*}
in $\mathcal{M}\times(0,T]$. If we choose $\alpha:=\frac{\tau}{\sqrt{\tau+1}}\geq4$, then
\begin{equation*}
t|\nabla^{2}u|\leq\left(\sqrt{\tau+1}+Bt\right)u\left(1-\ln\frac{u}{A}\right)
\end{equation*}
where $0<u\leq A$ and $\tau\geq 4\sqrt{\tau+1}$. The restriction on $\tau$
implies that $\tau\geq 8+4\sqrt{5}$ and that we can take $\tau:=8+4\sqrt{5}$
and then $\alpha=4$. Hence
\begin{equation*}
t|\nabla^{2}u|\leq\left(2+\sqrt{5}+Bt\right) u\left(1-\ln\frac{u}{A}\right)
\end{equation*}
where we can take $B$ to be the constant
\begin{equation*}
B:=\sqrt{16 m^{3/2}|V|^{2}K_{1}
+2m K_{2}+3m KK_{2}+14 m^{3/2}n KK_{1}
+100 n^{2}m^{3}(K_{1}+K_{2})^{2}}.
\end{equation*}

{\bf Proof part (b) of Theorem \ref{t6.1}:} Consider the cutoff function $\psi$ constructed in \cite{HanZhang12}, which
is supported in $Q_{R,T}(x_{0},t_{0})$, equals $1$ in $Q_{R/2,T/2}(x_{0},
t_{0})$, and satisfies
\begin{equation*}
|\nabla\psi|\leq\frac{C}{R}, \ |\Delta_{V}\psi|\leq C\frac{1+R\sqrt{K}}{R^{2}}, \
\frac{|\partial_{t}\psi|}{\sqrt{\psi}}\leq\frac{C}{T}, \ \frac{|\nabla\psi|^{2}}{\psi}
\leq\frac{C}{R^{2}}
\end{equation*}
where $C$ is a positive constant depending only on $n$. As in
\cite{HanZhang12}, we may require that $t_{0}=T$ and $\psi$ is supported in the slightly shorter space time cube $Q_{R,3T/4}(x_{0},t_{0})$.

For any smooth function $\eta$, as in the page 11 of \cite{HanZhang12}, we have
\begin{equation}
\Box_{V,\psi}(\psi\eta)=\psi\Box_{V}\eta+\eta\square_{V,\psi}\psi\label{6.34}
\end{equation}
where
\begin{equation}
\square_{V,\psi}:=
\square_{V}+\frac{2}{\psi}\langle\nabla\psi,\nabla\!\ \rangle.\label{6.35}
\end{equation}
Choosing $\eta=\lambda$ defined in \ref{6.22} and using the evolution equation
of $\lambda$, we have
\begin{eqnarray}
\square_{V,\psi}(\psi\lambda)&=&-\psi[H+(\boldsymbol{P}\oplus_{\alpha}
\boldsymbol{Q})(\boldsymbol{\xi},\boldsymbol{\xi})]+\lambda\square_{V,\psi}\psi
\nonumber\\
&&- \ \psi((\boldsymbol{V}\oplus_{\alpha}\boldsymbol{W})\boldsymbol{A})(
\boldsymbol{\xi},\boldsymbol{\xi})+\psi(\boldsymbol{A}(\boldsymbol{V}\oplus_{\alpha}
\boldsymbol{W}))(\boldsymbol{\xi},\boldsymbol{\xi}),\label{6.36}
\end{eqnarray}
where
\begin{equation}
H:=\alpha(1-f)w\boldsymbol{V}(\boldsymbol{\xi},\boldsymbol{\xi})
+2(1-f)w\boldsymbol{W}(\boldsymbol{\xi},\boldsymbol{\xi})
+2|(\boldsymbol{V}+f\boldsymbol{W})\boldsymbol{\xi}|^{2}.\label{6.37}
\end{equation}
Given a positive constant $\beta$, consider a unit eigenvector $\boldsymbol{\xi}$
of $\psi(\boldsymbol{V}\oplus_{\alpha}\boldsymbol{W})+\beta f\boldsymbol{g}$
with the maximal eigenvalue $\mu_{m}$ at the point $(p_{1},x_{1})$. Extend $\boldsymbol{\xi}$ to be a vector field, still
denoted by $\boldsymbol{\xi}$, by parallel translation along geodesics from $p_{1}$. Let $\mu_{1},\cdots,\mu_{m}$ be the eigenvalues of the two form $\psi(\boldsymbol{V}
\oplus_{\alpha}\boldsymbol{W})+\beta f\boldsymbol{g}$ at $(p_{1},t_{1})$ with the
increasing order. As before, we may assume that $\mu_{1}<0<\mu_{m}$. Define
\begin{equation}
\mu:=[\psi\boldsymbol{V}\oplus_{\alpha}\boldsymbol{W}
+\beta f\boldsymbol{g}](\boldsymbol{\xi},\boldsymbol{\xi})
=\psi\lambda+\beta f.
\end{equation}
Note that $\mu=\mu_{m}$ at the point $(p_{1},t_{1})$. From (\ref{6.36}) we get
\begin{equation}
\psi\square_{V,\psi}\mu=-\psi^{2}[H^{2}+(\boldsymbol{P}\oplus_{\alpha}
\boldsymbol{Q})(\boldsymbol{\xi},\boldsymbol{\xi})]+\psi\lambda\square_{V,\psi}
\psi+\psi\beta\square_{V,\psi}f.\label{6.39}
\end{equation}
By definition, $\square_{V,\psi}\psi$ is equal to
\begin{eqnarray*}
\square_{V,\psi}\psi&=&\partial_{t}\psi-\Delta_{V}\psi
+\frac{2f}{1-f}\langle\nabla f,\nabla\psi\rangle
+\frac{2}{\psi}|\nabla\psi|^{2}\\
&=&\partial_{t}\psi-\Delta_{V}\psi
+\frac{2f}{1-f}\left\langle\sqrt{\psi}\nabla f,\frac{\nabla\psi}{\sqrt{\psi}}
\right\rangle+\frac{2}{\psi}|\nabla\psi|^{2}
\end{eqnarray*}
Without loss of generality, we may assume that $0<u\leq A/e^{3}$; otherwise,
for $\frac{A}{e^{3}}\leq u\leq A$ we can consider a new function $\tilde{u}:=
u/e^{3}\in(0,A/e^{3}]$ and hence $\tilde{u}$ also satisfies the same
estimate (\ref{6.2}) which implies (\ref{6.2}) for $u$. Under our hypothesis and (\ref{6.5}), we arrive at
\begin{equation*}
\square_{V}f=\partial_{t}f-\Delta_{V}f+\frac{2f}{1-f}|\nabla f|^{2}
=|\nabla f|^{2}+\frac{2f}{1-f}|\nabla f|^{2}=\frac{1+f}{1-f}|\nabla f|^{2}\leq-\frac{1}{2}
|\nabla f|^{2}.
\end{equation*}
Consequently,
\begin{equation*}
\psi\square_{V,\psi}f=\psi\square_{V}f+2\left\langle\frac{\nabla\psi}{\sqrt{\psi}},
\sqrt{\psi}\nabla f\right\rangle
\leq-\frac{1}{4}\psi|\nabla f|^{2}+4\frac{|\nabla\psi|^{2}}{\psi}
\end{equation*}
As the estimate (\ref{6.23}) (or see the page 13 in \cite{HanZhang12}) we have (since $\mu\geq0$ implies $\psi\lambda\geq-\beta f\geq0$)
\begin{equation*}
-\psi^{2}H\leq-\frac{2(\psi\lambda)^{2}}{\alpha^{2}} \ \ \ \text{at} \ p_{1}, \ \ \
\text{whenever} \ \mu\geq0.
\end{equation*}
Hence, at the point $(p_{1},t_{1})$,
\begin{eqnarray*}
0&\leq&\psi\square_{V,\psi}\mu\\
&\leq&-\frac{2(\psi\lambda)^{2}}{\alpha^{2}}
-\psi^{2}(\boldsymbol{P}\oplus_{\alpha}\boldsymbol{Q})(\boldsymbol{\xi},
\boldsymbol{\xi})+\beta\left(-\frac{1}{4}\psi|\nabla f|^{2}
+4\frac{|\nabla\psi|^{2}}{\psi}\right)\\
&&+ \ \left[|\partial_{t}\psi|+|\Delta_{V}\psi|+2\frac{|\nabla\psi|^{2}}{\psi}
+2\sqrt{\psi}|\nabla f|\frac{|\nabla\psi|}{\sqrt{\psi}}\right]\psi\lambda\\
&\leq&-\frac{(\psi\lambda)^{2}}{\alpha^{2}}
-\psi^{2}(\boldsymbol{P}\oplus_{\alpha}\boldsymbol{Q})(\boldsymbol{\xi},
\boldsymbol{\xi})+\beta\left(-\frac{1}{4}\psi|\nabla f|^{2}+4\frac{|\nabla\psi|^{2}}{\psi}\right)\\
&&+ \ \frac{1}{2}\left(|\partial_{t}\psi|+|\Delta_{V}\psi|
+2\frac{|\nabla\psi|^{2}}{\psi}\right)^{2}+\frac{1}{2}\psi|\nabla f|^{2}
\frac{|\nabla\psi|^{2}}{\psi}.
\end{eqnarray*}
Choosing
\begin{equation}
\beta:=2\sup_{\mathcal{M}}\frac{|\nabla\psi|^{2}}{\psi}
\end{equation}
the above inequality shows that
\begin{equation*}
0\leq-\frac{(\psi\lambda)^{2}}{\alpha^{2}}
-\psi^{2}(\boldsymbol{P}\oplus_{\alpha}\boldsymbol{Q})(\boldsymbol{\xi},
\boldsymbol{\xi})+\frac{1}{2}\left(|\partial_{t}\psi|
+|\Delta_{V}\psi|+2\frac{|\nabla\psi|^{2}}{\psi}\right)^{2}
+8\sup_{\mathcal{M}}\frac{|\nabla\psi|^{4}}{\psi^{2}}
\end{equation*}
at the point $(p_{1},t_{1})$. By the properties of the cutoff function $\psi$, we arrive at
\begin{equation}
\frac{(\psi\lambda)^{2}}{\alpha^{2}}\leq\psi^{2}
(\boldsymbol{P}\oplus_{\alpha}\boldsymbol{Q})(\boldsymbol{\xi},
\boldsymbol{\xi})+8C\left(\frac{1}{T}+\frac{1+R\sqrt{K}}{R^{2}}\right)^{2}.
\label{6.41}
\end{equation}
By the same calculation as that of (\ref{6.29}), we obtain
\begin{equation*}
\psi|(\boldsymbol{P}\oplus_{\alpha}\boldsymbol{Q})(
\boldsymbol{\xi},\boldsymbol{\xi})|\leq 2m^{3/2}K_{1}(\mu_{m}+|\mu_{1}|+\beta |f|)
+4m(K_{1}+K_{2})\psi|\boldsymbol{W}|+2m\psi K_{2}
\end{equation*}
at the point $(p_{1},t_{1})$. Using $\mu_{m}+|\mu_{1}|\leq m\mu_{m}-\psi\frac{\alpha\Delta u}{u(1-f)}
-\frac{\psi|\nabla u|^{2}}{u^{2}(1-f)^{2}}+m\beta|f|$, the above estimates implies
\begin{eqnarray*}
\psi|(\boldsymbol{P}\oplus_{\alpha}\boldsymbol{Q})(\boldsymbol{\xi},
\boldsymbol{\xi})|&\leq&4m(K_{1}+K_{2})\psi|\boldsymbol{W}|
+2m\psi K_{2}+2m^{3/2}K_{1}\bigg(m\mu_{m}\\
&&- \ \psi\frac{\alpha\Delta u}{u(1-f)}
-\frac{\psi|\nabla u|^{2}}{u^{2}(1-f)^{2}}+(m+1)\beta|f|
\bigg)
\end{eqnarray*}
at the point $(p_{1},t_{1})$. Letting $a=q=0$ in Theorem 5.3, for any $\alpha\geq4$, we get
\begin{equation}
\psi\left(\frac{|\nabla u|^{2}}{u^{2}}-\alpha\frac{u_{t}}{u}\right)
\leq C n^{2}\alpha^{4}\left(\frac{1}{T}+\frac{1+R\sqrt{K}}{R^{2}}+K\right)\label{6.42}
\end{equation}
for some positive universal constant $C$, since the cutoff function is supported in a shorter cube. Using (\ref{6.42}) we have
\begin{eqnarray*}
\psi^{2}|(\boldsymbol{P}\oplus_{\alpha}\boldsymbol{Q})(\boldsymbol{\xi},
\boldsymbol{\xi})|&\leq&2m^{5/2}K_{1}\psi^{2}\lambda+2C n^{2}m^{3/2}\alpha^{4}K_{1}\psi^{2}
\left(\frac{1}{T}+\frac{1+R\sqrt{K}}{R^{2}}\right)\\
&&+ \ \left[2mK_{2}+\frac{m^{3/2}K_{1}\alpha^{2}}{2}|V|^{2}
+2C n^{2}m^{3/2}\alpha^{4}K_{1}K\right]\\
&&+ \ 4m(K_{1}+K_{2})\psi^{2}|\boldsymbol{W}|
+2m^{3/2}(m+1)K_{1}\beta|f|.
\end{eqnarray*}
According to Theorem 5.1 in \cite{ATW09} or \cite{SoupletZhang06}, we can
find a constant $C'$ depending only on $m$ so that
\begin{equation*}
\psi^{2}|\boldsymbol{W}|\leq C'\left(\frac{1}{T}+\frac{1}{R^{2}}
+K\right).
\end{equation*}
Consequently,
\begin{eqnarray*}
\psi^{2}|(\boldsymbol{P}\oplus_{\alpha}\boldsymbol{Q})(\boldsymbol{\xi},
\boldsymbol{\xi})|&\leq&2m^{\frac{5}{2}}K_{1}\psi^{2}\lambda+C''m^{\frac{3}{2}}n^{2}\alpha^{4}(K_{1}+K_{2})
\left(\frac{1}{T}+\frac{1+R\sqrt{K}}{R^{2}}\right)\\
&&+ \ \bigg[2mK_{2}+\frac{m^{3/2}K_{1}\alpha^{2}}{2}|V|^{2}
+2C m^{3/2}n^{2}\alpha^{4}K_{1}K\\
&&+ \ 4
m C'(K_{1}+K_{2})K\bigg]+4m^{5/2}K_{1}\beta|f|
\end{eqnarray*}
for another positive universal constant $C''$. Plugging it into (\ref{6.41}) implies
\begin{eqnarray*}
\frac{(\psi\lambda)^{2}}{\alpha^{2}}
&\leq& 2m^{5/2}K_{1}\psi\lambda+B_{1}\left(\frac{1}{T}+\frac{1+R\sqrt{K}}{R^{2}}
\right)
+8C\left(\frac{1}{T}+\frac{1+R\sqrt{K}}{R^{2}}\right)^{2}\\
&&+ \ B_{2}+4m^{5/2}K_{1}\beta|f|,
\end{eqnarray*}
at the point $(p_{1},t_{1})$, where
\begin{eqnarray*}
B_{1}&:=&C''m^{3/2}n^{2}\alpha^{4}(K_{1}+K_{2}),\\
B_{2}&:=&2mK_{2}+\frac{m^{3/2}K_{1}\alpha^{2}}{2}|V|^{2}
+2C m^{3/2}n^{2}\alpha^{4}K_{1}K+4
m C'(K_{1}+K_{2})K.
\end{eqnarray*}
An elementary inequality shows that
\begin{equation*}
\frac{\psi\lambda}{\alpha}
\leq 2\alpha m^{5/2}K_{1}+\sqrt{8C}\left(\frac{1}{T}+\frac{1+R\sqrt{K}}{R^{2}}
+\frac{B_{1}}{16 C}\right)+\sqrt{B_{2}}+2m^{5/4}\sqrt{K_{1}\beta|f|}
\end{equation*}
at the point $(p_{1},t_{1})$. Therefore
\begin{equation*}
\psi\lambda\leq\sqrt{8C}\alpha\left(\frac{1}{T}+\frac{1+R\sqrt{K}}{R^{2}}
+B\right)+2m^{5/4}\sqrt{K_{1}\beta|f|}
\end{equation*}
at the point $(p_{1},t_{1})$, where
\begin{equation*}
B:=\frac{2\alpha^{2}m^{5/2}K_{1}+\alpha\sqrt{B_{2}}}{\sqrt{8C}}
+\frac{B_{1}\alpha}{16C}.
\end{equation*}
As the same argument in the page 16 of \cite{HanZhang12}, using the inequality $2m^{5/4}\sqrt{K_{1}\beta|f|}\leq\beta|f|+2m^{5/2}K_{1}$ and $f<0$, we must have
\begin{equation*}
\mu\leq\sqrt{8C}\alpha\left(\frac{1}{T}+\frac{1+R\sqrt{K}}{R^{2}}
+B+\frac{2m^{5/2}K_{1}}{\sqrt{8C}\alpha}\right) \ \ \ \text{in} \ Q_{R,T}(x_{0},t_{0}).
\end{equation*}
For any unit tangent vector $\boldsymbol{\xi}$ at $x$ with $(x,t)\in Q_{R,T}(x_{0},t_{0})$, we have
\begin{equation*}
\psi\boldsymbol{V}(\boldsymbol{\xi},\boldsymbol{\xi})
\leq \sqrt{8C}\alpha\left(\frac{1}{T}+\frac{1+R\sqrt{K}}{R^{2}}+B
+\frac{2m^{5/2}K_{1}}{\sqrt{8C}\alpha}\right)(1-f) \ \ \ \text{in} \ Q_{R,T}(x_{0},t_{0}).
\end{equation*}
Taking $\alpha=4$ as in the proof of part (a), we finally obtain the following
estimate
\begin{equation*}
\psi\boldsymbol{V}(\boldsymbol{\xi},\boldsymbol{\xi})\leq C_{1}
\left(\frac{1}{T}+\frac{1+R\sqrt{K}}{R^{2}}+B'\right)(1-f) \ \ \ \text{in} \
Q_{R,T}(x_{0},t_{0})
\end{equation*}
where
\begin{equation*}
B':=C_{2}m^{5/2}n^{2}\left[K_{1}+K_{2}+\sqrt{(K_{1}+K_{2})K+K_{2}
+K_{1}|V|^{2}}\right],
\end{equation*}
for some positive universal constants $C_{1}, C_{2}$.
\\

{\bf Acknowledgments.} Yi Li is partially supported by Shanghai YangFan
Project (grant) No. 14YF1401400. The author would like to thank Shanghai Center for Mathematical
Sciences, where part of their work was done during the visit, for their hospitality.

\bibliographystyle{amsplain}

\end{document}